\newcommand{\mz}{\ensuremath{\mathbb Z}}
\newcommand{\mr}{\ensuremath{\mathbb R}}
\newcommand{\shortmod}{\ensuremath{\negthickspace \negthickspace \negthickspace \pmod}}
\newcommand{\half}{\ensuremath{ \frac{1}{2}}}
\newcommand{\intR}{\int_{-\infty}^{\infty}}
\newcommand{\thalf}{\tfrac12}
\newcommand{\sumstar}{\sideset{}{^*}\sum}
\newcommand{\leg}[2]{\left(\frac{#1}{#2}\right)}
\newcommand{\e}[2]{e\left(\frac{#1}{#2}\right)}
\DeclareMathOperator{\sgn}{sgn}
\theoremstyle{plain}		
	\newtheorem{mytheo}{Theorem}[section]
	\newtheorem{myprop}[mytheo]{Proposition}
     \newtheorem{mylemma}[mytheo]{Lemma}
	\newtheorem{myconj}[mytheo]{Conjecture}
	\newtheorem{myremark}[mytheo]{Remark}
\theoremstyle{remark}
\numberwithin{equation}{section}
\begin{document}
\title{The third moment of quadratic Dirichlet L-functions}
\author{Matthew P. Young} 
\address{Department of Mathematics \\
	  Texas A\&M University \\
	  College Station \\
	  TX 77843-3368 \\
		U.S.A.}
\email{myoung@math.tamu.edu}
\thanks{This material is based upon work supported by the National Science Foundation under agreement No. DMS-0758235.  Any opinions, findings and conclusions or recommendations expressed in this material are those of the authors and do not necessarily reflect the views of the National Science Foundation.}

\begin{abstract}
We study the third moment of quadratic Dirichlet $L$-functions, obtaining an error term of size $O(X^{3/4 + \varepsilon})$.
\end{abstract}

\maketitle
\section{Introduction}
Quadratic twists of $L$-functions have been extensively studied from many points of view.  
In this paper, we consider the problem of the third moment of quadratic Dirichlet $L$-functions.  Soundararajan \cite{Sound} was the first to find an asymptotic formula for this third moment, obtaining a power savings in the error term.  Subsequently, Diaconu-Goldfeld-Hoffstein \cite{DGH} used the multiple Dirichlet series method and obtained a superior error term.  Our goal in this paper is to further refine the error term in this problem by adapting a technique introduced in \cite{Y}.
\begin{mytheo}
\label{thm:mainthm}
Let $F$ be a smooth, compactly-supported function on the positive reals with support in a dyadic interval $[X/2, 3X]$, and satisfying
\begin{equation}
\label{eq:Fproperty}
 F^{(j)}(x) \ll_j X^{-j}, \qquad j=0, 1, 2, \dots.
\end{equation}
Then
\begin{equation}
\label{eq:thirdmoment}
 \sumstar_{(d,2)=1} L(1/2, \chi_{8d})^3 F(d) = \sumstar_{(d,2)=1} P(\log{d}) F(d) + O(X^{3/4 + \varepsilon}),
\end{equation}
where the star indicates the sum is over squarefree integers, and $P(x)$ is a certain degree $6$ polynomial.
\end{mytheo}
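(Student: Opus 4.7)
The plan is to adapt the method of [Y] to this moment problem. The starting point is a smoothed approximate functional equation
\[
L(\thalf, \chi_{8d})^3 = 2 \sum_{n \geq 1} \frac{\tau_3(n) \chi_{8d}(n)}{\sqrt n}\, V\!\lp \frac{n}{(8d)^{3/2}} \rp,
\]
for a weight $V$ of rapid decay, so that effectively $n \ll X^{3/2+\varepsilon}$. Swapping orders of summation, the problem becomes to evaluate
\[
\mathcal S(n) := \sumstar_{(d,2)=1} \chi_{8d}(n) F(d) V\!\lp\frac{n}{(8d)^{3/2}}\rp
\]
uniformly in $n$, and to extract the main terms therein. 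I would remove the squarefree condition on $d$ using $\sum_{a^2\mid d}\mu(a)$, truncating the $a$-sum at a parameter $Y$; the tail $a > Y$ can be handled by Cauchy--Schwarz combined with Heath--Brown's quadratic large sieve and a known upper bound for moments of $L(\thalf,\chi_{8d})$.

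The key manipulation is Poisson summation in $b := d/a^2$. Factoring $n = n_1 n_2^2$ with $n_1$ odd squarefree, quadratic reciprocity reduces $\chi_{8d}(n)$ essentially to $\chi_{8n_1}(d)$, a character whose modulus is independent of $d$; Poisson summation mod $8n_1$ then replaces the sum over $b$ (of length $\asymp X/a^2$) by a dual sum over $k$ (of length $\asymp n_1 a^2/X$), weighted by Gauss sums of the real character $\chi_{8n_1}$. These Gauss sums evaluate cleanly to $\pm\sqrt{8n_1}$ times explicit roots of unity. The main term $P(\log d)$ of degree $6$ (consistent with the Katz--Sarnak symplectic prediction $k(k+1)/2$ for $k=3$) arises from assembling the $n_1=1$ and $k=0$ contributions: expanding the resulting Dirichlet series and shifting contours produces the polynomial through residues of a zeta-type function with a pole of order $6$ at the central point.

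The principal difficulty is the off-diagonal estimate: one must bound the contribution from $n_1 > 1$ and $k \neq 0$ by $O(X^{3/4+\varepsilon})$. After Gauss-sum evaluation, this sum takes the shape of an average of $L$-values for Dirichlet characters of conductor controlled by $k$ and $a$, to which one applies mean-value estimates (for instance, combining the large sieve for quadratic characters with the second moment of Dirichlet $L$-functions). The refinement over prior approaches, following [Y], lies in peeling off \emph{secondary main terms} concealed in the off-diagonal sum: one performs a second approximate functional equation (or an equivalent contour shift of a hidden double Dirichlet series) to separate out these residue contributions cleanly before applying mean-value estimates. Without this step one loses logarithmic factors and cannot reach the exponent $3/4+\varepsilon$; balancing this saving against the tail estimate from removal of the squarefree condition then fixes $Y \asymp X^{1/4}$.
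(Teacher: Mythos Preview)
Your outline misidentifies where the innovation of [Y] lives, and as a result the argument as written only reproduces Soundararajan's $X^{7/8+\varepsilon}$. Bounding the tail $a>Y$ by Cauchy--Schwarz and the quadratic large sieve gives an error of size roughly $X^{1+\varepsilon}/Y$, while the off-diagonal contribution from $M_N$ (the $a\le Y$ piece after Poisson) is of size $Y X^{3/4+\varepsilon}$ even \emph{after} the secondary $k=\square$ main terms are extracted. Balancing forces $Y=X^{1/8}$ and error $X^{7/8+\varepsilon}$; your proposed $Y\asymp X^{1/4}$ does not balance these two pieces. Extracting the $k=\square$ residues from the off-diagonal is already in Soundararajan --- it is not the new idea.

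The missing ingredient is the \emph{recursive} treatment of the tail $M_R$. Rather than bounding it, one writes $d\to b^2 d$ with the new $d$ squarefree, so that the inner sum over $d$ becomes exactly the original moment $M_{\alpha+s,\beta+s,\gamma+s}$ but with weight supported on $[X'/2,3X']$ for $X'\asymp X/(ab)^2$. One then inserts the asymptotic formula one already has (with exponent $f$, starting from $f=1$ via the large sieve). This produces from $M_R$ eight partial main terms; four of them match \emph{termwise} with the $M_N$ main terms (the $k=0$ term and the three $k=\square$ residues), each pair differing only by $a\le Y$ versus $a>Y$, so their sum is free of $Y$. Verifying this matching is the real work --- it requires nontrivial Euler-product identities between two very different Dirichlet series. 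The upshot is an implication ``exponent $f$ $\Rightarrow$ exponent $\tfrac34+\tfrac{f-3/4}{2f}$'', and iterating drives $f\to 3/4$. Without this recursion on $M_R$ and the resulting cancellation of $Y$-dependence, there is no route to $3/4+\varepsilon$.
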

For the smoothed sum as above, Soundararajan \cite{Sound} previously obtained an error of size $X^{7/8 + \varepsilon}$ (for this, see the final displayed equation on p.487 and optimally choose $Y = X^{1/8}$).  Using the multiple Dirichlet series method, \cite{DGH} obtained an error of size $X^{4/5 + \varepsilon}$ for the smoothed mean value.

We shall present the full details of the proof of Theorem \ref{thm:mainthm}, but the technique can certainly be generalized to cover other cases.  For instance, one can find an asymptotic formula for the second moment of the same family, perhaps with an error term of size $O(X^{1/2 + \varepsilon})$.  It would be interesting to study the first moment of quadratic twists of a $GL_3$ automorphic form such as the symmetric-square lift of a holomorphic modular form on the full modular group (we mention this case since the Ramanujan conjecture is known for such forms which avoids some potential pitfalls).  This case should be largely similar to the third moment in this paper, but there is an extra difficulty since one cannot exploit the factorization of the $L$-functions and use Heath-Brown's quadratic large sieve \cite{H-B} (see comments following Theorem \ref{thm:recursive} below).

We stress that the condition that $d$ is squarefree is a substantial difficulty (improperly seen as a simple technicality).  For instance, with $F$ as in Theorem \ref{thm:mainthm}, compare the difficulty in the following two sums:
\begin{equation}
A= \sum_{d \in \mz} F(d), \qquad B= \sumstar_{d \in \mz} F(d).
\end{equation}
Poisson summation quickly shows $A = \widehat{F}(0) + O(X^{-C})$ for any large $C > 0$.  On the other hand, the standard Perron-type formula approach quickly shows $B = \frac{\widehat{F}(0)}{\zeta(2)} + O(X^{1/2})$, and any improvement on the exponent $1/2$ would give a quasi-Riemann hypothesis, that is, a zero-free region for the Riemann zeta function in $\text{Re}(s) > 1-\delta$ for some $\delta >0$.  The underlying reason for this is that the generating function for the squarefree numbers is $\zeta(s)/\zeta(2s)$.  

The method of proof of Theorem \ref{thm:mainthm} uses the main idea of \cite{Y} which is a kind of recursive argument that efficiently treats the squarefree condition.  In effect, there is almost no cost to handling squarefree numbers compared to all integers (up to a barrier at improving on an error of size $O(X^{1/2 + \varepsilon})$; our method unfortunately does not lead to a quasi-Riemann hypothesis!).

Here we briefly sketch the method.  We start with an approximate functional equation for the central value (actually we consider values slightly shifted from the central point).
The basic idea is to use M\"{o}bius inversion on the sum over $d$ to remove the squarefree condition, and then to use Poisson summation.  The M\"{o}bius procedure effectively shortens the sum over $d$: say we write $\sumstar_d f(d) = \sum_{a} \mu(a) \sum_{d} f(a^2 d)$, then for large values of $a$ the sum over $d$ would be quite short indeed, and then Poisson summation might not be a wise choice as the dual sum would be longer than the original sum.  The idea is to treat small and large $a$ differently.  For $a$ small one uses Poisson summation; for $a$ large one could use the trivial bound as in \cite{Sound}.  However,
for $a$ large one can obtain a more precise result by writing $d \rightarrow b^2 d$ where the new $d$ is squarefree.  Then one arrives at something similar to where one started, and rather than bounding this expression trivially one can plug in an already-obtained asymptotic formula.  Doing so leads to certain partial main terms that turn out to combine with other main terms from $a$ small.  This was the main new idea appearing in \cite{Y}.  The idea is simple yet carrying out the details involves proving elaborate (perhaps miraculous?) combinatorial type identities for the various main terms that arise in different ways.  

We attempted to perform the same technique on the third moment in this paper, and the method largely succeeds though there are some new difficulties.  One difficulty is that the main terms do not combine quite as nicely as in \cite{Y};  apparently we are missing some harmonics.  This seems to be related to the problem of getting an asymptotic formula for the fourth moment of this family (where if one uses Poisson summation and collects all the main terms arising in similar ways as in this paper, then one does not obtain all the main terms predicted by \cite{CFKRS}).  
To get around this issue of missing terms (which are not main terms but rather unwieldly expressions that should combine and simplify), we noticed that a particular choice of a weight function in the approximate functional equation can cause some of these terms to vanish.

Obtaining an error term better than $O(X^{3/4})$ is of great interest for a few reasons.  For one, it is apparently related to the subconvexity problem for a $GL_3$ $L$-function twisted by a quadratic character.  The basic idea is that if the central values were non-negative, and the weight function in Theorem \ref{thm:mainthm} were allowed to be supported in a short interval (as in \cite{Sound}), then we could drop all but one term in the moment and obtain an upper bound of size $O(X^{3/4-\delta})$ for that $L$-value, which would be a subconvexity bound as the conductor is $\asymp X^3$ (note that recently Blomer \cite{Blomer} obtained subconvexity for quadratic twists of a symmetric-square lift of a Maass or holomorphic form of full level, though not using the above approach).  Although we cannot rigorously claim that improving the error term in this way implies subconvexity, it seems that the issues are related.

The combinatorial identities in this paper are quite elaborate.  This is a general phenomenon observed in various examples yet not well understood.  In some previous works we found some techniques that simplify the proofs of these identities, which involve showing that two very different arithmetical Dirichlet series turn out to be the same, at least for special values of the parameters.  Because of the sizes of the expressions involved here, we resorted to computer verification of certain identities.  We stress that the remarkable simplifications in this paper give very strong evidence to their correctness; our experience shows that even a tiny error completely destroys the simplifications.

As in any proof by induction, it is extremely beneficial to have the inductive hypothesis provided ahead of time.  This is accomplished by the general moment conjectures of \cite{CFKRS}; their method leads to the form of the main term very quickly and with minimal effort.

It is interesting to compare how our approach is related to the multiple Dirichlet series method used in \cite{DGH}.  To this end, we briefly summarize their method in simplified terms.  One might wish to study the two-variable Dirichlet series
\begin{equation*}
Z(s,w) = \sumstar_{d \geq 1, \text{ odd}} \frac{L(s, \chi_{8d})^3}{d^w}.
\end{equation*}
By the Perron formula method, the meromorphic continuation of this function in terms of $w$, with $s =1/2$, is intimately related to the error term in \eqref{eq:thirdmoment}.  In fact, Theorem \ref{thm:mainthm} gives the meromorphic continuation of $Z(1/2, w)$ to $\text{Re}(w) > 3/4$ with a singularity at $w=1$ only (we include a brief sketch of this fact in the next paragraph). One approach to studying $Z(s,w)$, initially in its region of absolute convergence, would proceed by writing the Dirichlet series expansion for $L(s, \chi_{8d})^3$, using M\"{o}bius inversion to remove the condition that $d$ is squarefree, and then applying Poisson summation to the sum over $d$.  Heuristically, this should lead to a connection between $Z(s,w)$ and $Z(s,1-w)$, but unfortunately the Poisson formula does not lead to exactly the same Dirichlet series (we see this in our work in Section \ref{section:MNcalculation} and Lemma \ref{lemma:Jprop} below).  Instead, the method of \cite{DGH} is to construct a different auxiliary Dirichlet series say $Z'(s,w)$ that does satisfy nice functional equations but differs from $Z(s,w)$ in that it involves non-fundamental discriminants and corresponding correction factors at ``bad'' primes in the Euler products.  The functional equations of $Z'$ make it a pleasant function to understand, and with a kind of sieving argument this information can be transferred to $Z$ (and hence to the moment on the left hand side of \eqref{eq:thirdmoment}).

To see that $Z(1/2, w)$ has the desired meromorphic continuation, let $F$ be a function satisfying the conditions in Theorem \ref{thm:mainthm} with $X=1$, and such that $F(x) \geq 0$ for all $x$ (but not identically zero).  With $\widetilde{F}$ denoting the Mellin transform of $F$, we have $\widetilde{F}(w) d^{-w} = \int_0^{\infty} x^{-w} F(d/x) \frac{dx}{x}$.  Note that the real part of $\widetilde{F}(w)$ is always positive (this is direct from the definition) and hence this function has no zeros.
Then
\begin{equation*}
\widetilde{F}(w) Z(1/2, w) = \int_0^{\infty} x^{-w} \Big( \sumstar_{d \geq 1, \text{ odd}} L(1/2, \chi_{8d})^3 F(d/x) \Big) \frac{dx}{x}.
\end{equation*}
Applying \eqref{eq:thirdmoment}, we obtain a main term plus an error term.   The main term can be explicitly computed which gives the meromorphic continuation of this term, while the error term has absolute convergence for $\text{Re}(w) > 3/4$ and hence leads to the desired analytic continuation.

It would be desirable to extend the class of allowable weight functions in Theorem \ref{thm:mainthm} in order to obtain non-vanishing results in short intervals, or to un-smooth the sum in \eqref{eq:thirdmoment}.  To obtain strong results in this direction, it would be desirable to study moments analogous to \eqref{eq:thirdmoment} but at points $s$ with 
 potentially large imaginary part.  The multiple Dirichlet series approach is well-suited to keeping track of this $s$-dependence.
As an aside, we mention that un-smoothing \eqref{eq:thirdmoment} is somewhat subtle because the central values are not known to be non-negative; as a result, the error term as stated in Theorem $2$ of \cite{Sound} is not justified without this assumption.  It would also be valuable to understand the other classes of fundamental discriminants in place of those of the form $8d$ with $d$ odd squarefree and positive; in principle, the different cases should be similar to each other.


\subsection{Acknowledgements}
I thank Brian Conrey, David Farmer, Jeff Hoffstein, Mike Rubinstein, and K. Soundararajan for discussions.

\section{Tools}
In this section we quote some results we use throughout the paper.
For $d$ a fundamental discriminant, let $\chi_d$ be the corresponding primitive quadratic Dirichlet character.  We shall work with discriminants of the form $8d$ where $d$ is odd, squarefree, and positive, so that $\chi_{8d}(n) = \leg{8d}{n}$ for $n$ odd is an even, primitive character of conductor $8d$.

\subsection{Functional equation}
The functional equation of the Dirichlet $L$-function associated to $\chi_{8d}$ is
\begin{equation}
\Lambda(s, \chi_{8d}) = \left(\frac{8d}{\pi}\right)^{s/2} \Gamma\left(\frac{s}{2} \right) L(s, \chi_{8d})
 = \Lambda(1-s, \chi_{8d}).
\end{equation}
In its asymmetric form it reads
\begin{equation}
L(s, \chi_{8d}) = X(s) L(1-s, \chi_{8d}),
\qquad 
X(1/2 + u) = \left(\frac{8d}{\pi} \right)^{-u } \frac{\Gamma\left(\frac{1/2 - u}{2} \right)}{\Gamma\left(\frac{1/2 + u }{2} \right)}.
\end{equation}

\subsection{Approximate functional equation}
\label{section:AFE}
Suppose that $\alpha, \beta, \gamma$, called the ``shift parameters'', are small complex numbers.  We shall suppose initially that each $\lambda \in \{\alpha, \beta, \gamma \}$ lies in a punctured rectangle of the form $|\text{Re}(\lambda)| \leq c_1/\log{X}$, $|\text{Im}(\lambda)| \leq c_2 X^{\varepsilon}$ minus $|\text{Re}(\lambda)| \leq c_1/(2\log{X})$, $|\text{Im}(\lambda)| \leq (c_2/2) X^{\varepsilon}$
where the $c_i$ (depending on the choice of $\lambda$) are chosen so that the domain corresponding to each shift parameter has distance $\gg 1/X^{\varepsilon}$ from the other two shift parameters.  Later we can relax this condition.

\begin{myprop}[Approximate functional equation]
\label{prop:AFE}
Let $G(s)$ be an entire, even function with rapid decay in the strip $|\text{Re}(s)| \leq 10$.  Then for $\chi_{8d}$ as above,
\begin{multline}
L(\thalf + \alpha, \chi_{8d}) L(\thalf + \beta, \chi_{8d}) L(\thalf + \gamma, \chi_{8d})
=
\sum_{n} \frac{\chi_{8d}(n) \sigma_{\alpha,\beta,\gamma}(n)}{n^{\thalf}} V_{\alpha, \beta,\gamma} \left(\frac{n}{d^{3/2}}\right)
\\
+ d^{-\alpha-\beta-\gamma} \Gamma_{\alpha,\beta,\gamma} \sum_n \frac{\chi_{8d}(n) \sigma_{-\alpha,-\beta,-\gamma}(n)}{n^{\thalf}} V_{-\alpha, -\beta,-\gamma} \left(\frac{n}{d^{3/2}}\right),
\end{multline}
where notation is as follows:
\begin{equation}
\label{eq:Vdef}
V_{\alpha, \beta,\gamma}(x) = \frac{1}{2 \pi i} \int_{(1)} \frac{G(s)}{s} g_{\alpha, \beta, \gamma}(s) x^{-s} ds;
\end{equation}
\begin{equation}
g_{\alpha, \beta,\gamma}(s) = \left(\frac{8}{\pi}\right)^{\frac{3s}{2}}
\frac{\Gamma\left(\frac{\half + \alpha + s}{2} \right)}{\Gamma\left(\frac{\half + \alpha}{2} \right)} 
\frac{\Gamma\left(\frac{\half + \beta + s}{2} \right)}{\Gamma\left(\frac{\half + \beta}{2} \right)} 
\frac{\Gamma\left(\frac{\half + \gamma + s}{2} \right)}{\Gamma\left(\frac{\half + \gamma}{2} \right)};
\end{equation}
$\Gamma_{\alpha,\beta,\gamma}= \Gamma_{\alpha} \Gamma_{\beta} \Gamma_{\gamma} $, where
\begin{equation}
\label{eq:Gammadef}
 \Gamma_{\alpha} = \left(\frac{8}{\pi} \right)^{-\alpha} 
\frac{\Gamma\left(\frac{\thalf- \alpha}{2} \right)}{\Gamma\left(\frac{\thalf + \alpha}{2} \right)};
\end{equation}
\begin{equation}
 \sigma_{\alpha,\beta,\gamma}(n) = \sum_{a b c = n} a^{-\alpha} b^{-\beta} c^{-\gamma}.
\end{equation}
\end{myprop}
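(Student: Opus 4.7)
The plan is to derive both sides of the claimed identity from a single contour integral representation. I would consider
$$
I := \frac{1}{2\pi i}\int_{(2)} \frac{G(s)}{s}\, g_{\alpha,\beta,\gamma}(s)\, d^{3s/2}\, L(\thalf+\alpha+s,\chi_{8d})L(\thalf+\beta+s,\chi_{8d})L(\thalf+\gamma+s,\chi_{8d})\,ds
$$
and evaluate it in two ways. First, on the line $\text{Re}(s)=2$ the triple product of $L$-functions is absolutely convergent, so expanding it as the Dirichlet series $\sum_n \chi_{8d}(n)\sigma_{\alpha,\beta,\gamma}(n)n^{-(1/2+s)}$ and interchanging sum and integral produces exactly the first sum on the right after recognizing the inner integral as $V_{\alpha,\beta,\gamma}(n/d^{3/2})$; the contour may be shifted from $(2)$ to $(1)$ (matching the definition of $V$) since no poles are crossed.

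The second evaluation is by residues: shift the contour in $I$ from $(2)$ to a line $(-\delta)$ with $\delta>0$ small enough to avoid the gamma poles of $g_{\alpha,\beta,\gamma}(s)$ (the first of which lie near $s=-1/2-\alpha,\,-1/2-\beta,\,-1/2-\gamma$). The only pole crossed is the simple pole at $s=0$, with residue $G(0)g_{\alpha,\beta,\gamma}(0)L(\thalf+\alpha,\chi_{8d})L(\thalf+\beta,\chi_{8d})L(\thalf+\gamma,\chi_{8d})$. Since $g_{\alpha,\beta,\gamma}(0)=1$ is immediate and $G(0)=1$ is the implicit normalization, this residue is precisely the left-hand side of the proposition.

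It remains to identify the shifted integral on $(-\delta)$ with the second sum. I would apply the asymmetric functional equation $L(\thalf+\lambda+s,\chi_{8d}) = X(\thalf+\lambda+s)L(\thalf-\lambda-s,\chi_{8d})$ to each of the three $L$-factors and then substitute $s\mapsto -s$. Because $G$ is even, the two sign changes from $G(-s)=G(s)$ and $d(-s)/(-s) = ds/s$ balance, and the contour returns to $(\delta)$. The algebraic core of the computation is the identity
$$
 d^{-3s/2}\, g_{\alpha,\beta,\gamma}(-s)\, X(\thalf+\alpha-s)X(\thalf+\beta-s)X(\thalf+\gamma-s) = d^{-\alpha-\beta-\gamma}\,\Gamma_{\alpha,\beta,\gamma}\, d^{3s/2}\, g_{-\alpha,-\beta,-\gamma}(s),
$$
which follows by collecting powers of $d$ and $8/\pi$, telescoping the ratio $\Gamma((\thalf+\lambda-s)/2)/\Gamma((\thalf+\lambda-s)/2)$ that appears between the two factors, and recognizing the leftover $\Gamma((\thalf-\lambda)/2)/\Gamma((\thalf+\lambda)/2)$ as $(8/\pi)^{\lambda}\Gamma_\lambda$ via \eqref{eq:Gammadef}. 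Given this identity, the contour on $(\delta)$ is moved to $(2)$ (no poles of $g_{-\alpha,-\beta,-\gamma}$ or of the $L$-functions lie in between), the triple $L$-function is expanded as the Dirichlet series $\sum_n \chi_{8d}(n)\sigma_{-\alpha,-\beta,-\gamma}(n)n^{-(1/2+s)}$, and the inner integral is identified as $V_{-\alpha,-\beta,-\gamma}(n/d^{3/2})$ after pulling back to $(1)$, producing the second sum exactly.

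I expect the only step that requires actual care is the book-keeping in the gamma/powers identity above; the rest is standard contour manipulation, and the growth conditions on $G$ together with Stirling justify all the interchanges and contour shifts (the lines $(1)$, $(\delta)$, $(2)$ all lie in the strip $|\text{Re}(s)|\le 10$ where $G$ has rapid decay).
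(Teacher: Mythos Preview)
The paper does not actually supply a proof of Proposition~\ref{prop:AFE}; it is quoted as a standard tool in Section~2 and immediately followed by a remark on the choice of $G$. Your argument is the standard derivation (as in, e.g., Iwaniec--Kowalski, Theorem 5.3): introduce the contour integral, pick up the residue at $s=0$, apply the functional equation on the shifted line, and use the evenness of $G$ under $s\mapsto -s$. The gamma/power bookkeeping you wrote out is correct, and your observation that $G(0)=1$ is required is accurate---the paper states this normalization in Remark~\ref{remark:zero} rather than in the proposition itself. So your proof is correct and is exactly what the paper is implicitly invoking.
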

\begin{myremark}
\label{remark:zero}
 We shall choose $G$ to vanish at the poles of $\zeta(1 + 2\alpha + 2s)$, $\zeta(1 + 2 \beta + 2s)$, etc., and to be divisible by
a large collection of values of $\xi(s) = s(1-s) \pi^{-s/2} \Gamma(s/2) \zeta(s)$, including $\xi(4 + 2\alpha + 2\beta + 4s)$, $\xi(2 + 2\alpha + 2 \gamma + 4s)$, etc.  (here we have listed the first couple terms in the numerator and denominator, respectively, of \eqref{eq:Ap'product} with $\alpha, \beta, \gamma$, replaced by $\alpha + s, \beta + s, \gamma+s$).
More precisely, these zeta factors occur in the analysis of the arithmetical
factor $A_{\alpha + s, \beta + s, \gamma+s}$ 
defined by \eqref{eq:Aoriginal} and developed much further in Lemma \ref{lemma:A} below.  For a given $\varepsilon > 0$ small we choose $G(s)$ 
to vanish at the poles of all the $\zeta$'s which occur in Lemma \ref{lemma:A} as numerators (i.e., with $d_{a,b,c} > 0$), and also
to be divisible by all the $\zeta$'s which occur in Lemma \ref{lemma:A} as denominators (i.e., with $d_{a,b,c} < 0$), when obtaining the meromorphic continuation of $A_{\alpha + s, \beta+s, \gamma+s}$ to $\text{Re}(s) > -\half + \varepsilon$.  

We furthermore suppose by a symmetrization argument that $G(s)$ is symmetric under any permutation of $\{\alpha, \beta, \gamma\}$, and under switching any $\alpha, \beta, \gamma$ with its negative, and under switching $s$ with $-s$.  Then by scaling we can ensure $G(0) = 1$; Assuming the shift parameters lie in the punctured rectangles as above then in terms of the shift parameters we have $G(s) \ll X^{\varepsilon}$.

\end{myremark}
Making such a choice for $G$ remarkably simplifies certain later computations.  The point is that we wish to use a contour shift argument to analyze certain integrals in the development of the moment under consideration.  In so doing one would cross many poles arising from the arithmetical factors.  However, the contribution from these residues would involve values of $G(s)$ at points other than $s=0$ and since $G$ can be chosen from a wide class of functions, it is apparently unlikely that these residues can persist as main terms.  By having $G$ vanish at these points we see a priori that these terms do not survive in the final answer.  In \cite{Y}, we did not move the contours past such poles but instead matched up integrals to form the simplifications.  One might hope to see a similar matching here but in fact it seems some of the terms are missing, that is, the method does not capture all these terms.  One would naturally speculate that these missing terms arise from an incomplete analysis of the new mean value arising after Poisson summation (i.e., in \eqref{eq:M1evaluated} below).

\subsection{Poisson summation}
We now quote Soundararajan's result
\begin{mylemma}[\cite{Sound}]
\label{lemma:poisson}
 Let $F$ be a smooth function with compact support on $\mr^{+}$, and suppose that $n$ is an odd integer.  Then
\begin{equation}
 \sum_{(d,2) = 1} \leg{d}{n} F(d) = \frac{1}{2n} \leg{2}{n} \sum_{k \in \mz} (-1)^k G_k(n) \check{F}\left(\frac{k}{2n}\right),
\end{equation}
where
\begin{equation}
 G_k(n) = \left(\frac{1-i}{2} + \leg{-1}{n} \frac{1+i}{2}\right) \sum_{a \shortmod{n}} \leg{a}{n} \e{ak}{n},
\end{equation}
and
\begin{equation}
 \check{F}(y) = \intR (\cos(2 \pi x y) + \sin(2 \pi x y)) F(x) dx.
\end{equation}
\end{mylemma}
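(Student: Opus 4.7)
The plan is to reduce the sum to classical Poisson summation in arithmetic progressions, and then to identify the arithmetic output with standard Gauss sums. Since $n$ is odd, the Jacobi symbol $\leg{d}{n}$ is periodic in $d$ modulo $n$ and hence modulo $2n$, so I would first split the sum according to the odd residue classes $a$ modulo $2n$:
\begin{equation*}
\sum_{(d,2)=1} \leg{d}{n} F(d) = \sum_{\substack{a \text{ odd} \\ \text{mod } 2n}} \leg{a}{n} \sum_{m \in \mz} F(a + 2nm),
\end{equation*}
and apply classical Poisson summation to each arithmetic progression. This yields
\begin{equation*}
\sum_{(d,2)=1} \leg{d}{n} F(d) = \frac{1}{2n} \sum_{k \in \mz} \widehat{F}\Bigl(\frac{k}{2n}\Bigr)\, T_k(n), \qquad T_k(n) = \sum_{\substack{a \text{ odd} \\ \text{mod } 2n}} \leg{a}{n}\, \e{ak}{2n},
\end{equation*}
with $\widehat{F}$ the usual Fourier transform.

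Next I would evaluate $T_k(n)$ using the bijection between odd residues $a$ modulo $2n$ and all residues $b$ modulo $n$, given by $a \equiv b$ if $b$ is odd and $a \equiv b+n$ if $b$ is even. For $k$ even this gives $T_k(n) = G^{\mathrm{std}}_{k/2}(n)$, where $G^{\mathrm{std}}_j(n) = \sum_{a \shortmod{n}} \leg{a}{n}\, \e{aj}{n}$ is the standard Gauss sum; for $k$ odd the identity $(-1)^b = e(b/2)$ absorbs the parity correction, giving $T_k(n) = -G^{\mathrm{std}}_{(n+k)/2}(n)$ (an integer index since $n+k$ is even). A short calculation via $a \mapsto 2^{-1}a$ modulo $n$ establishes the dilation identity $G^{\mathrm{std}}_{2r}(n) = \leg{2}{n}\, G^{\mathrm{std}}_r(n)$, and this lets me reindex both parities onto a common variable, extracting a global factor $\leg{2}{n}$ together with a sign $(-1)^k$ from the even/odd split:
\begin{equation*}
\sum_{(d,2)=1} \leg{d}{n} F(d) = \frac{\leg{2}{n}}{2n} \sum_{k \in \mz} (-1)^k G^{\mathrm{std}}_k(n)\, \widehat{F}\Bigl(\frac{k}{2n}\Bigr).
\end{equation*}

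The final step is cosmetic: replace $\widehat{F}$ by $\check{F}$ and absorb the remaining phase into $\epsilon_n := \tfrac{1-i}{2} + \leg{-1}{n}\tfrac{1+i}{2}$. Writing $\check{F}(y) = \tfrac{1-i}{2}\widehat{F}(-y) + \tfrac{1+i}{2}\widehat{F}(y)$, using the reflection $G^{\mathrm{std}}_{-k}(n) = \leg{-1}{n} G^{\mathrm{std}}_k(n)$ (from $a \mapsto -a$), and noting the identity $\epsilon_n \bigl(\tfrac{1-i}{2}\leg{-1}{n} + \tfrac{1+i}{2}\bigr) = 1$ (verified case-by-case in $n \bmod 4$) converts the previous display into the stated form with $G_k(n) = \epsilon_n G^{\mathrm{std}}_k(n)$.

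The main obstacle is not analytic but combinatorial: all ingredients are elementary, yet making the factors $\leg{2}{n}$, $(-1)^k$, and $\epsilon_n$ appear in their stated positions requires careful bookkeeping of signs and phases through the parity splits and Gauss-sum symmetries.
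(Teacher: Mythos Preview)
Your proof is correct. The paper does not actually prove this lemma; it is quoted directly from Soundararajan \cite{Sound} without argument. What you have written is essentially the standard derivation (and matches Soundararajan's), so there is nothing to compare against in the present paper.

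A couple of minor verifications for the reader, since you only sketch them: for $k$ odd, your formula $T_k(n) = -G^{\mathrm{std}}_{(n+k)/2}(n)$ combined with the dilation identity and the periodicity $G^{\mathrm{std}}_{n+k}(n)=G^{\mathrm{std}}_k(n)$ indeed gives $T_k(n) = -\leg{2}{n} G^{\mathrm{std}}_k(n) = (-1)^k \leg{2}{n} G^{\mathrm{std}}_k(n)$, consistent with the even case. And in the final step, the identity $\epsilon_n\bigl(\tfrac{1-i}{2}\leg{-1}{n}+\tfrac{1+i}{2}\bigr)=1$ is exactly what is needed after substituting $k\mapsto -k$ in the $\widehat{F}(-k/2n)$ piece and using $G^{\mathrm{std}}_{-k}(n)=\leg{-1}{n}G^{\mathrm{std}}_k(n)$; both cases $n\equiv 1,3\pmod 4$ check out. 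The bookkeeping is done correctly.
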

The Gauss-type sum is calculated exactly with the following
\begin{mylemma}[\cite{Sound}]
\label{lemma:Gk}
If $m$ and $n$ are relatively prime odd integers, then $G_k(mn) = G_k(m) G_k(n)$, and if $p^{\alpha}$ is the largest power of $p$ dividing $k$ (setting $\alpha=\infty$ if $k=0$), then
\begin{equation}
\label{eq:Gk}
 G_k(p^{\beta}) = 
\begin{cases}
 0, \qquad & \text{if $\beta \leq \alpha$ is odd}, \\
 \phi(p^{\beta}), \qquad & \text{if $\beta \leq \alpha$ is even}, \\
 -p^{\alpha}, \qquad & \text{if $\beta = \alpha + 1$ is even}, \\
 \leg{kp^{-\alpha}}{p} p^{\alpha} \sqrt{p}, \qquad & \text{if $\beta = \alpha +1$ is odd}, \\
 0, \qquad & \text{if $\beta \geq \alpha +2$.}
\end{cases}
\end{equation}
\end{mylemma}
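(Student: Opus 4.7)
The plan is to split the proof into two stages: first establishing the multiplicativity $G_k(mn) = G_k(m)G_k(n)$ for coprime odd positive $m,n$, and then evaluating $G_k(p^\beta)$ case by case on the parity of $\beta$. Throughout I write $\epsilon_n := \frac{1-i}{2} + \leg{-1}{n}\frac{1+i}{2}$ for the prefactor; for odd $n$ this equals $1$ if $n \equiv 1 \pmod 4$ and $-i$ if $n \equiv 3 \pmod 4$.

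For the multiplicativity step, I parametrize residues modulo $mn$ via the Chinese Remainder Theorem, writing $a \equiv a_1 n \bar n + a_2 m \bar m \pmod{mn}$ with $n \bar n \equiv 1 \pmod m$ and $m \bar m \equiv 1 \pmod n$. The Jacobi symbol factors as $\leg{a}{mn} = \leg{a_1}{m}\leg{a_2}{n}$ and the exponential as $\e{ak}{mn} = \e{a_1 \bar n k}{m}\,\e{a_2 \bar m k}{n}$. After changing variables $a_1 \mapsto a_1 n$ and $a_2 \mapsto a_2 m$, the raw sum picks up a factor $\leg{n}{m}\leg{m}{n} = (-1)^{(m-1)(n-1)/4}$ by quadratic reciprocity. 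A direct four-case check shows $\epsilon_{mn} = \epsilon_m \epsilon_n \cdot (-1)^{(m-1)(n-1)/4}$, so the reciprocity sign is absorbed exactly by the ratio of prefactors, yielding multiplicativity.

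For the prime power evaluation I separate on the parity of $\beta$. If $\beta$ is even, then $\leg{a}{p^\beta}$ is the indicator of $(a,p)=1$, making the inner sum a Ramanujan sum $c_{p^\beta}(k)$, whose classical evaluation gives $\phi(p^\beta)$, $-p^{\beta-1}$, or $0$ according to whether $\beta \leq \alpha$, $\beta = \alpha+1$, or $\beta \geq \alpha+2$; since $p^\beta \equiv 1 \pmod 4$ we have $\epsilon_{p^\beta} = 1$, matching three of the five cases. If $\beta$ is odd, $\leg{a}{p^\beta} = \leg{a}{p}$, and writing $a = b + cp$ with $b$ modulo $p$ and $c$ modulo $p^{\beta-1}$ separates the sum into
\[
\Big(\sum_{b} \leg{b}{p} \e{bk}{p^\beta}\Big) \Big(\sum_{c} \e{ck}{p^{\beta-1}}\Big).
\]
The $c$-sum vanishes unless $p^{\beta-1} \mid k$, forcing $\beta \leq \alpha+1$; when $\beta \leq \alpha$ the exponential $\e{bk}{p^\beta}$ is trivial and $\sum_b \leg{b}{p} = 0$, while when $\beta = \alpha+1$ (so $\alpha$ is even) writing $k = p^\alpha k'$ with $(k',p)=1$ produces $\leg{k'}{p} g(p)$, where $g(p)$ is the standard quadratic Gauss sum, equal to $\sqrt p$ or $i\sqrt p$ according as $p \equiv 1$ or $3 \pmod 4$. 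Multiplying by $\epsilon_{p^\beta} \in \{1,-i\}$ (determined by the same congruence) yields $\leg{k'}{p} p^\alpha \sqrt p$ uniformly, completing the remaining cases.

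The only real obstacle is careful bookkeeping of the factors of $i$, which appear both in the reciprocity sign during multiplicativity and in the phase of $g(p)$ when $p \equiv 3 \pmod 4$. In each place the prefactor $\epsilon_n$ has been designed precisely to absorb them, so no substantial analytic input is required beyond the classical evaluations of Ramanujan sums and quadratic Gauss sums.
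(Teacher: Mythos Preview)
Your argument is correct. The paper itself does not prove this lemma; it is quoted from Soundararajan \cite{Sound} without proof, so there is no ``paper's proof'' to compare against. What you have written is exactly the standard verification: CRT plus quadratic reciprocity for the multiplicativity (with the prefactor $\epsilon_n$ absorbing the reciprocity sign), and the reduction to Ramanujan sums for even $\beta$ and to the classical quadratic Gauss sum for odd $\beta$. Each of the five cases in \eqref{eq:Gk} is accounted for, and the bookkeeping of the $i$'s (in both the multiplicativity step and in the phase of $g(p)$ when $p\equiv 3\pmod 4$) is handled correctly.
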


\subsection{The \cite{CFKRS} conjecture}
Suppose $F(d)$ is as in Theorem \ref{thm:mainthm}.  Then
\begin{myconj}[\cite{CFKRS}]
\label{conj}
Suppose $\alpha, \beta, \gamma$ lie in the rectangle $|\text{Re}(s)| \leq \frac{\varepsilon}{\log{X}}$, $|\text{Im}(s)| \leq X^{\varepsilon}$.  Then if $l= l_1 l_2$ is odd with $l_1$ squarefree and $l_2$ a square, then
\begin{multline}
\label{eq:conj}
\sumstar_{(d,2)=1} L(\thalf + \alpha, \chi_{8d}) L(\thalf + \beta, \chi_{8d}) L(\thalf + \gamma, \chi_{8d}) \chi_{8d}(l) F(d) 
\\
= \sum_{\epsilon_1, \epsilon_2, \epsilon_3 \in \{\pm 1 \}}  A_{\epsilon_1 \alpha, \epsilon_2 \beta,\epsilon_3 \gamma}(l) \Gamma_{\alpha, \beta, \gamma}^{\delta_1, \delta_2, \delta_3} \frac{ \widetilde{F}(1- \delta_1 \alpha - \delta_2 \beta - \delta_3 \gamma)}{2 \zeta_2(2) \sqrt{l_1}}  + O(X^{f} \sqrt{l_1} (l_1 l_2 X)^{\varepsilon}),
\end{multline}
where
\begin{multline}
\label{eq:Aoriginal}
 A_{\alpha,\beta,\gamma}(l) = \sum_{(n,2)=1} \frac{\sigma_{\alpha,\beta,\gamma}(l_1 n^2)}{n} \prod_{p | nl_1 l_2} (1 + p^{-1})^{-1} 
\\
= \zeta_2(1 + 2\alpha) \zeta_2(1 + 2\beta) \zeta_2(1 + 2\gamma) \zeta_2(1 + \alpha + \beta) \zeta_2(1 + \alpha + \gamma) \zeta_2(1 + \beta + \gamma) B_{\alpha,\beta,\gamma}(l),
\end{multline}
where $B_{\alpha, \beta, \gamma}$ has an absolutely convergent Euler product for the parameters in a neighborhood of the origin.  Furthermore, the meaning of $\Gamma_{\alpha,\beta,\gamma}^{\delta_1, \delta_2, \delta_3}$ is simply $\Gamma_{\alpha}^{\delta_1} \Gamma_{\beta}^{\delta_2} \Gamma_{\gamma}^{\delta_3}$, where $\delta_i = 0$ if $\epsilon_i = +1$, and $\delta_i = 1$ if $\epsilon_i = -1$.  
\end{myconj}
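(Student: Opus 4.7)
The plan is to apply the method of this paper directly. First, invoke Proposition \ref{prop:AFE} (carrying along the character twist $\chi_{8d}(l)$) to write the triple product as a direct piece plus a dual piece of size $d^{-\alpha-\beta-\gamma}\Gamma_{\alpha,\beta,\gamma}$ with signs flipped. Expanding $\sigma_{\alpha,\beta,\gamma}(n)=\sum_{abc=n}a^{-\alpha}b^{-\beta}c^{-\gamma}$ turns each piece into a triple Dirichlet series in $(a,b,c)$ twisted by $\chi_{8d}(abcl)$ with a smooth cutoff on the size of $abc$ relative to $d^{3/2}$.

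Second, remove the squarefree condition via $\sumstar_d = \sum_e \mu(e) \sum_{d:\, e^2 \mid d}$ and apply Lemma \ref{lemma:poisson} to the inner sum over $d$. The main term comes from the zero-frequency $k=0$, where by Lemma \ref{lemma:Gk} the Gauss factor $G_0(n)$ picks out the diagonal on which $n_1 n_2 n_3 l$ is a square (up to the $e^2$). Collapsing the Mellin representation \eqref{eq:Vdef} of $V_{\alpha,\beta,\gamma}$ together with the $d$-integration produces the Mellin transform $\widetilde{F}$ evaluated at $1-\delta_1\alpha-\delta_2\beta-\delta_3\gamma$, where $(\delta_1,\delta_2,\delta_3)$ records which $L$-factors were flipped by the functional equation. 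The freedom to flip any subset of $\{\alpha,\beta,\gamma\}$ generates all eight sign-choices $(\epsilon_1,\epsilon_2,\epsilon_3)\in\{\pm 1\}^3$ in the predicted main term.

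Third, the arithmetic factor is identified as $A_{\epsilon_1\alpha,\epsilon_2\beta,\epsilon_3\gamma}(l)$ by a prime-by-prime computation of the Euler product of the resulting multiple Dirichlet series: the six polar factors in \eqref{eq:Aoriginal} come out as the leading singular behavior, and the residual $B_{\alpha,\beta,\gamma}(l)$ is shown to be absolutely convergent near the origin by an elementary Euler-factor estimate. The normalization $(2\zeta_2(2)\sqrt{l_1})^{-1}$ emerges from the Poisson leading constant combined with the $(1+p^{-1})^{-1}$ factors appearing in $A_{\alpha,\beta,\gamma}(l)$, and the split $l = l_1 l_2$ tracks the squarefree versus square part of the twist parameter through the $\sqrt{l_1}$ in Lemma \ref{lemma:poisson}.

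The main obstacle is the error term. For small $e$, the nonzero frequencies $k\neq 0$ are handled via Lemma \ref{lemma:Gk} together with Heath-Brown's quadratic large sieve \cite{H-B}. For large $e$, where Poisson summation is inefficient, one must invoke the recursive device of \cite{Y}: write $d = f^2 d'$ with $d'$ squarefree and re-apply the very asymptotic being proved (this is precisely why the conjecture is formulated with a general twist $l$, so that the inductive reinsertion matches the form of \eqref{eq:conj}). Closing the recursion and verifying that the various partial main terms arriving from different routes telescope into the clean eight-fold symmetric expression requires elaborate combinatorial identities among multiple Dirichlet series; as the introduction notes, this is the most delicate part of the argument, and is also the origin of the ``missing harmonics'' phenomenon that is circumvented only by the judicious choice of the weight function $G$ in Remark \ref{remark:zero}.
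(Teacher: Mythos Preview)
The statement is labelled a Conjecture; the paper does not prove it for an unspecified $f$ but rather establishes it with $f=3/4$ through the recursive Theorem~\ref{thm:recursive}.  Your proposal is a sketch of that recursive argument, and at the architectural level (approximate functional equation, M\"obius to remove squarefreeness, Poisson in $d$, quadratic large sieve for small M\"obius parameter, recursion for large M\"obius parameter, and combinatorial matching of the partial main terms) it agrees with the paper.

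There is, however, a genuine gap in your second paragraph.  You say that the main terms all come from the zero frequency $k=0$, and that the eight sign configurations $(\epsilon_1,\epsilon_2,\epsilon_3)$ are produced by flipping an arbitrary subset of $\{\alpha,\beta,\gamma\}$ via the functional equation.  Neither is what actually happens.  Proposition~\ref{prop:AFE} is a \emph{two}-piece formula: it flips all three shifts simultaneously or none, so it yields only $M_1$ and $M_{-1}$.  The $k=0$ term of $M_1$ supplies exactly one main term, namely $(\epsilon_1,\epsilon_2,\epsilon_3)=(1,1,1)$.  The three single-flip terms $(-1,1,1)$, $(1,-1,1)$, $(1,1,-1)$ do \emph{not} come from $k=0$; they arise from the nonzero frequencies with $k$ a square.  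After writing $k=k_1 k_2^2$ and shifting the $u$-contour in \eqref{eq:M1evaluated}, one crosses poles of the three Dirichlet $L$-functions $L(1+u-\tfrac{s}{2}+\lambda,\chi_{\epsilon k_1})$ at $k_1=1$, and the residues there are the off-diagonal main terms $M_N(k=\square,\alpha)$, $M_N(k=\square,\beta)$, $M_N(k=\square,\gamma)$ of Lemma~\ref{lemma:MNresult}.  These are then matched (via the Dirichlet-series identity of Lemma~\ref{lemma:2Dirichletseries} and the Archimedean identity of Lemma~\ref{lemma:archcalc}) with the recursive pieces $M_R(-1,1,1)$ etc., completing the four main terms for $M_1$; the dual piece $M_{-1}$ symmetrically supplies the other four.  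A plan that attributes all eight terms to $k=0$ would leave six of them unaccounted for and would not close.
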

The conjecture is derived from the following orthogonality relation
\begin{equation}
\label{eq:orthogonality}
 \sumstar_{(d,2) = 1} \chi_{8d}(m) F(d) \sim \frac{\widetilde{F}(1)}{2 \zeta_2(2)} \prod_{p|m} (1+ p^{-1})^{-1},
\end{equation}
for $m$ an odd square, and is $o(X)$ otherwise (for fixed $m$).  This relation appears in a slightly different form in \cite{CFKRS} but can be directly calculated using Poisson summation along the lines of the calculations in Section \ref{section:MNcalculation}.

The expected value of $f$ is somewhat controversial.  The five authors \cite{CFKRS} conjectured that $f=1/2$ is allowable, while \cite{DGH} predict the existence of a main term with $f=3/4$; the subtlety here is that \cite{DGH} analyze the moments involving non-fundamental discriminants and correction factors at bad primes.  A sieving argument is required to convert between the different moments.
Q. Zhang \cite{Zhang} provided further evidence for $f=3/4$ and in fact gave a prediction for the numerical value of the constant in the lower-order term of size $X^{3/4}$.  The constant is $\approx -.2$ making the lower-order term difficult to detect with numerics; however, M. Alderson and M. Rubinstein \cite{AR} recently performed extensive calculations which perhaps show modest agreement with this (numerically small) lower-order term.

\section{Outline of the method}
Our recursive approach to the problem takes the form
\begin{mytheo}
\label{thm:recursive}
 If Conjecture \ref{conj} is true with $f > 3/4$, then it is true for $f$ replaced by $\frac34 + \frac{f-\frac34}{2f}$.
\end{mytheo}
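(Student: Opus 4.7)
The plan is to mimic the recursive technique of \cite{Y}. Starting from the left side of \eqref{eq:conj}, I would first apply the approximate functional equation (Proposition \ref{prop:AFE}) to express the triple $L$-value product as a pair of twisted Dirichlet sums of smoothed length $\asymp d^{3/2}$. Next I would use M\"{o}bius inversion to remove the squarefree condition, writing
\begin{equation*}
\sumstar_{(d,2)=1} = \sum_{(a,2)=1} \mu(a) \sum_{(d',2)=1}
\end{equation*}
with $d \to a^2 d'$, and then split the $a$-sum at a parameter $A$, to be optimized at the end, into a Poisson regime $a \leq A$ and a recursive regime $a > A$.

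For $a \leq A$, I would apply Lemma \ref{lemma:poisson} to the sum over $d'$ (smooth of length $\asymp X/a^2$) against $\chi_{8 a^2 d'}(n) F(a^2 d')$. The zero-frequency term $k = 0$ produces candidate main terms; to analyze them I would shift contours through the poles of $\zeta$-factors in the arithmetical function $A_{\alpha + s, \beta + s, \gamma + s}$ (Lemma \ref{lemma:A}, referenced in Remark \ref{remark:zero}). The choice of $G$ prescribed there annihilates all such residues except the one at $s = 0$, so no extraneous main terms appear. The non-zero frequencies $k \neq 0$ carry a Gauss-type factor $G_k(n)$ evaluated via Lemma \ref{lemma:Gk} and a Fourier transform $\check{F}(k/(2n))$ that restricts $|k| \ll n a^2 / X$. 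Bounding these using Heath-Brown's quadratic large sieve \cite{H-B}, together with the factorization of $G_k(n)$, should yield
\begin{equation*}
E_{\mathrm{small}} \ll X^{3/4 + \varepsilon} A \sqrt{l_1} \, l^{\varepsilon}.
\end{equation*}

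For $a > A$, I would reintroduce the squarefree condition by writing $d' = b^2 e$ with $e$ squarefree, absorbing the coprimality of $\chi_{8 a^2 b^2 e}$ with $ab$ into an auxiliary twist parameter $l' = l \cdot (\text{square divisor from } a, b)$ so that the inner moment has the shape on the left of \eqref{eq:conj} with $l$ replaced by $l'$ and $X$ replaced by $X/(ab)^2$. Invoking Conjecture \ref{conj} with parameter $f$ on this inner moment produces a second family of candidate main terms together with the error
\begin{equation*}
E_{\mathrm{large}} \ll \sum_{a > A} \sum_{b \geq 1} \left(\frac{X}{(ab)^2}\right)^{\!f} \sqrt{l'_1} \, (l'X)^{\varepsilon} \ll X^f A^{1 - 2f} \sqrt{l_1} \, (lX)^{\varepsilon},
\end{equation*}
where convergence in $b$ and tail convergence in $a$ both require $f > 1/2$, which holds by assumption.

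The two sets of candidate main terms, combined with the tail corrections from completing the $a$-sum in the Poisson regime to all of $\mathbb{N}$, must collapse exactly to the $\mathrm{\cite{CFKRS}}$ main term on the right of \eqref{eq:conj}. This matching is the main obstacle: it reduces to elaborate arithmetic identities among Dirichlet series built from $A_{\pm \alpha, \pm \beta, \pm \gamma}(l)$ and the gamma factors $\Gamma_{\alpha,\beta,\gamma}^{\delta_1, \delta_2, \delta_3}$, which the introduction warns involve computer verification. The symmetrization of $G$ under permutations of $\{\alpha, \beta, \gamma\}$ and sign changes, together with its vanishing at the $\zeta$-poles of Lemma \ref{lemma:A}, should force this combinatorial collapse. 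Once the matching is complete, balancing the two errors via
\begin{equation*}
X^{3/4} A = X^f A^{1 - 2f} \quad \Longrightarrow \quad A = X^{(f - 3/4)/(2f)}
\end{equation*}
yields total error $O\bigl(X^{3/4 + (f - 3/4)/(2f) + \varepsilon} \sqrt{l_1}\, (lX)^{\varepsilon}\bigr)$, proving Conjecture \ref{conj} with $f$ replaced by $\tfrac34 + \tfrac{f - 3/4}{2f}$. The hypothesis $f > 3/4$ is exactly what makes the optimal $A$ satisfy $1 \leq A \leq X^{1/2}$, the range in which the split is nontrivial and the Poisson step remains efficient.
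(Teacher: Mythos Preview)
Your overall architecture matches the paper's: approximate functional equation, M\"obius to remove the squarefree condition, split the $a$-sum at a parameter, Poisson on the small-$a$ range, recursion via the conjecture on the large-$a$ range, match main terms, and balance the two errors to obtain the stated exponent. The error bounds you quote for the two regimes are exactly those of Lemmas~\ref{lemma:MNresult} and~\ref{lemma:MRresult}, and your final optimization $A = X^{(f-3/4)/(2f)}$ is correct.

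There is, however, a real gap in your treatment of the Poisson side. You assert that only the zero frequency $k=0$ produces main terms and that all $k\neq 0$ contributions can be bounded by $O(X^{3/4+\varepsilon}A\sqrt{l_1})$ via the quadratic large sieve. This is false as stated. After Poisson summation the $k\neq 0$ terms are organized as a contour integral involving $L(\cdot,\chi_{\epsilon k_1})$ with $k=k_1 k_2^2$, $k_1$ squarefree (Section~\ref{section:MNcalculation}, Lemma~\ref{lemma:Jprop}). When $\epsilon=+$ and $k_1=1$, i.e.\ when $k$ is a perfect square, these $L$-functions become $\zeta$ and the contour shift in $u$ crosses three poles at $u=s/2-\alpha,\,s/2-\beta,\,s/2-\gamma$, producing three additional ``off-diagonal'' main terms $M_N(k=\square,\alpha)$, $M_N(k=\square,\beta)$, $M_N(k=\square,\gamma)$ (Lemma~\ref{lemma:MNresult}). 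Only \emph{after} these residues are removed does the large sieve bound the remainder by $O(X^{3/4+\varepsilon}A)$. These off-diagonal pieces are not small: each is of size $\asymp X$, and each must pair with the corresponding recursive term $M_R(-1,1,1)$, $M_R(1,-1,1)$, $M_R(1,1,-1)$ (Lemma~\ref{lemma:MNk0andMR111}, equation~\eqref{eq:MNksquareandMR-111}) to build three of the CFKRS main terms. If you omit them, those three $M_R$-pieces---each the $a>A$ tail of a full main term---remain unmatched and of size $X$, and the argument collapses. Relatedly, the recursive step actually returns \emph{eight} terms $M_R(\epsilon_1,\epsilon_2,\epsilon_3)$; the four with two or three $\epsilon_i=-1$ have no Poisson-side partner and must be bounded separately by shifting the $s$-contour to the right (Lemma~\ref{lemma:MRbound}), a step your outline does not mention.
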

The case $f=1$ is ``trivial'' in the sense that the error term is larger than the main term, but quite nontrivial in that it uses the quadratic large sieve of Heath-Brown \cite{H-B}.  
Actually, we could start with $f=7/4$ which is an immediate consequence of the convexity bound (however, we emphasize that we still require the quadratic large sieve in the proof of Theorem \ref{thm:recursive} though perhaps with extra work it could be removed).  In any event, the sequence $f_1 =1$, $f_2 = 7/8$, $f_3 = 23/28$, etc. leads to $f = 3/4$ as allowable (one can check that the sequence of $f_n$'s is decreasing and bounded below by $3/4$ and hence has a limit which is easily checked to be $3/4$).  

\subsection{Dissection}
Before embarking on the details of the proof, we shall give an overview of the whole argument, deferring proofs to later sections.  First we need some notation. Let $M(l) = M_{\alpha, \beta, \gamma}(l)$ be the moment on the left hand side of \eqref{eq:conj}.  Using the ``two-piece'' approximate functional equation from Proposition \ref{prop:AFE}, write
\begin{equation}
 M_{\alpha, \beta, \gamma}(l) = M_1(l) + M_{-1}(l),
\end{equation}
respective to the two sums in the approximate functional equation.  That is,
\begin{equation}
 M_1 = \sumstar_{(d,2) = 1} F(d) \sum_{n} \frac{\chi_{8d}(nl) \sigma_{\alpha,\beta,\gamma}(n)}{n^{\thalf}} V_{\alpha, \beta,\gamma} \left(\frac{n}{d^{3/2}}\right),
\end{equation}
and
\begin{equation}
\label{eq:M-1def}
 M_{-1} = \Gamma_{\alpha,\beta,\gamma} \sumstar_{(d,2) = 1} d^{-\alpha-\beta-\gamma} F(d)  \sum_{n} \frac{\chi_{8d}(nl) \sigma_{-\alpha,-\beta,-\gamma}(n)}{n^{\thalf}} V_{-\alpha, -\beta,-\gamma} \left(\frac{n}{d^{3/2}}\right).
\end{equation}
The expressions are similar enough that we may focus upon $M_1$ and then easily deduce analogous formulas for $M_{-1}$.  Indeed, $M_{-1}$ is the same as $M_1$ after swapping $\alpha$ and $-\alpha$, $\beta$ and $-\beta$, $\gamma$ and $-\gamma$, replacing $F(x)$ by $F_{-\alpha, -\beta, -\gamma}(x) = x^{-\alpha - \beta - \gamma} F(x)$, and multiplying by $\Gamma_{\alpha,\beta,\gamma}$, in that order.

As explained in the introduction, the plan is to use M\"{o}bius inversion on the sum over $d$ and to treat the resulting sum in two ways.
We begin by removing the condition that $d$ is squarefree, getting
\begin{equation}
\label{eq:M1expression}
 M_1= \sum_{(a,2l) = 1} \mu(a) \sum_{(d,2)=1} F(d a^2) \sum_{(n,2a)=1} \frac{\chi_{8d}(nl) \sigma_{\alpha,\beta,\gamma}(n)}{n^{\thalf}} V_{\alpha, \beta,\gamma} \left(\frac{n}{(a^2 d)^{3/2}}\right).
\end{equation}
Now we separate the terms with $a \leq Y$ and with $a > Y$ ($Y$ a parameter to be chosen later), writing $M_1 = M_N + M_R$, respectively.  We similarly write $M_{-1} = M_{-N} + M_{-R}$.

\subsection{Overview of $M_N$}
To evaluate $M_N$, we use Poisson summation, Lemma \ref{lemma:poisson}, on the sum over $d$, getting a sum over $k$, say.  The term $k=0$ gives a certain main term, which we denote $M_N(k=0)$.  As Soundararajan realized \cite{Sound}, the terms $k \neq 0$ should be separated into squares and non-squares.  The squares give three more main terms which we denote $M_N(k=\square, \alpha)$, $M_N(k=\square, \beta)$, $M_N(k=\square, \gamma)$.  Evaluating these ``off-diagonal'' main terms is much more subtle than for $k=0$ and is accomplished in Section \ref{section:MNwholesection} below.  The work with $M_N$ is summarized with the following
\begin{mylemma}
\label{lemma:MNresult}
For the special choice of $G(s)$ given by Remark \ref{remark:zero}, we have
\begin{equation}
 M_N = M_N(k=0) + M_N(k=\square, \alpha) + M_N(k=\square, \beta) + M_N(k=\square, \gamma) + O(Y l^{1/2+\varepsilon} X^{3/4+\varepsilon}).
\end{equation}
\end{mylemma}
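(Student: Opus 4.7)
The plan is to truncate \eqref{eq:M1expression} to $a\leq Y$ and apply Poisson summation (Lemma \ref{lemma:poisson}) to the inner sum over odd $d$ against the weight $F(a^2 d)$. After accounting for the dilation by $a^2$, the essential support of the dual sum restricts $|k| \ll nla^2 X^{-1+\varepsilon}$ via repeated integration by parts using \eqref{eq:Fproperty}, while the support of $V_{\alpha,\beta,\gamma}(n/(a^2 d)^{3/2})$ confines $n \ll X^{3/2+\varepsilon}$. The resulting $k$-sum decomposes naturally into $k=0$, $k$ a (signed) perfect square, and $k$ non-square.

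The $k=0$ piece is by definition $M_N(k=0)$: Lemma \ref{lemma:Gk} gives $G_0(p^\beta)=\phi(p^\beta)$ for even $\beta$ and $0$ otherwise, which forces the odd part of $nl$ to be a square and produces the expected diagonal main term. The three pieces $M_N(k=\square,\alpha), M_N(k=\square,\beta), M_N(k=\square,\gamma)$ arise from $k = \pm k_2^2$: the Gauss-sum factorisation turns the $n$-sum into a Dirichlet series carrying zeta factors $\zeta(1+2\alpha)\zeta(1+2\beta)\zeta(1+2\gamma)$, whose three simple poles in the shift parameters each contribute one of the off-diagonal main terms. The detailed extraction of these residues, and the role of the choice of $G$ from Remark \ref{remark:zero} in killing spurious residues picked up during contour shifts, is the content of Section \ref{section:MNwholesection}; for this lemma I only need that these main terms can be peeled off cleanly, leaving a remainder supported on non-square $k$.

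For the non-square contribution, write $k = k_0 k_2^2$ with $k_0$ squarefree and $k_0 \notin \{\pm 1\}$. Quadratic reciprocity recasts $G_k(nl)/\sqrt{nl}$ in terms of $\left(\tfrac{\cdot}{k_0}\right)$, viewed as a nontrivial quadratic character of modulus (a divisor of) $4|k_0|$ acting on the squarefree part of $n$. The resulting bilinear form in quadratic characters is precisely what Heath-Brown's quadratic large sieve \cite{H-B} is designed for. Applying it dyadically in $k_0$ and in the squarefree part of $n$, with effective ranges $|k_0| \ll X^{1/2} l a^2 X^{\varepsilon}$ and $n \ll X^{3/2+\varepsilon}$, then summing trivially over $k_2$, the square part of $n$, and $a\leq Y$, produces the bound $O(Yl^{1/2+\varepsilon}X^{3/4+\varepsilon})$. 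The factor $l^{1/2}$ tracks the $\sqrt{nl}$ normalisation of the Gauss sum, while the exponent $3/4$ emerges from balancing the two sides of the large sieve inequality against the given ranges.

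The main obstacle is the careful bookkeeping at boundary primes: when $v_p(k)$ and $v_p(nl)$ are comparable, one falls into the middle cases of Lemma \ref{lemma:Gk}, and the straightforward quadratic reciprocity manipulation must be supplemented by an Euler-factor decomposition isolating the ramified primes, including those dividing $l$. These contribute extra divisor-bounded factors that must be shown to be absorbed into $X^{\varepsilon}$. A secondary technicality is verifying that the vertical decay of $G(s)$ imposed by the vanishing/divisibility prescriptions of Remark \ref{remark:zero} is still strong enough to render the contour defining $V_{\alpha,\beta,\gamma}$ uniformly convergent throughout the argument, so that interchanging the $n$-sum with the Mellin integral in \eqref{eq:Vdef} is justified.
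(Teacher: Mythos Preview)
Your organization differs from the paper's in a way that creates a genuine gap. The paper does not split the $k\neq 0$ terms into ``square'' versus ``non-square'' and treat them by separate mechanisms. Instead it writes $k=k_1k_2^2$ with $k_1$ squarefree, passes to a Mellin representation for the weight (Lemma~\ref{lemma:H}), and packages the $n$- and $k_2$-sums into the two-variable Dirichlet series $J_{\epsilon k_1}(v,w)$ of \eqref{eq:Jdef}. Lemma~\ref{lemma:Jprop} factors out three Dirichlet $L$-functions $L_{2al}(\tfrac12+w+\alpha,\chi_{\epsilon k_1})$, $L_{2al}(\tfrac12+w+\beta,\chi_{\epsilon k_1})$, $L_{2al}(\tfrac12+w+\gamma,\chi_{\epsilon k_1})$. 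One then shifts the $u$-contour from $c_u=\tfrac12+\varepsilon$ to $c_u=-\tfrac14+\varepsilon$ (keeping $c_s=\tfrac12+\varepsilon$); the three poles crossed are poles in the \emph{Mellin variable} $u$, namely at $u=\tfrac{s}{2}-\alpha$, $\tfrac{s}{2}-\beta$, $\tfrac{s}{2}-\gamma$, and they occur only when $\epsilon k_1=1$. Those residues are by definition $M_N(k=\square,\alpha)$, etc. Your description of the mechanism --- ``zeta factors $\zeta(1+2\alpha)\zeta(1+2\beta)\zeta(1+2\gamma)$, whose three simple poles in the shift parameters each contribute'' --- misidentifies both the functions (they are $L$-functions in a contour variable, not constants in the shifts) and the variable carrying the pole.

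More seriously, your assertion that the main terms ``can be peeled off cleanly, leaving a remainder supported on non-square $k$'' is false. After extracting the three residues, the $k_1=1$ term still contributes the shifted contour integral, and that integral is of size $X^{3/4+\varepsilon}$, the same order as the claimed error. In the paper this piece is not handled separately: the shifted integral is bounded \emph{uniformly over all squarefree $k_1$} (including $k_1=1$) using Heath-Brown's fourth-moment estimate $\sumstar_{K<k_1\le 2K}|L(\sigma+it,\chi_{\epsilon k_1})|^4\ll K^{1+\varepsilon}(1+|t|)^{1+\varepsilon}$ together with the decay $k_1^{-(3c_s/2-c_u)}=k_1^{-1-\varepsilon}$. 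The exponent $3/4$ comes precisely from $\widetilde F(1+u)\ll X^{1+c_u}=X^{3/4+\varepsilon}$ on the shifted line, not from balancing ranges in a bilinear large sieve on the $(n,k_0)$ sum. Your direct bilinear approach may well be workable, but as stated it omits both the square-$k$ remainder and any analogue of the contour shift that produces the $X^{3/4}$ saving; without the latter, the sketch does not explain why one lands at $3/4$ rather than a weaker exponent.
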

The $k \neq 0$ terms can be naturally expressed as a certain contour integral; moving contours to the left picks up poles giving the three ``off-diagonal'' main terms.  Bounding the contour integral on the new lines of integration gives the error term.

\subsection{Overview of $M_R$}
The evaluation of $M_R$ is in some sense simpler than for $M_N$ as it is purely combinatorial.  However, the expressions become somewhat complicated; here we give a sketchy argument that we make rigorous in Section \ref{section:CalculatingMR} below.
Recall
\begin{equation}
M_R =  \sum_{\substack{(a,2l) = 1 \\ a > Y}} \mu(a) \sum_{(d,2)=1} F(d a^2) \sum_{(n,2a)=1} \frac{\chi_{8d}(nl) \sigma_{\alpha,\beta,\gamma}(n)}{n^{\thalf}} V_{\alpha, \beta,\gamma} \left(\frac{n}{(a^2 d)^{3/2}}\right).
\end{equation}
Now apply the change of variables $d \rightarrow b^2 d$ with the new $d$ squarefree, to get
\begin{equation}
\label{eq:MRdstar}
M_R = \sum_{\substack{(a,2l) = 1 \\ a > Y}} \mu(a) \sum_{(b,2l)=1}  \sumstar_{(d,2)=1} F(d (ab)^2) \sum_{(n,2ab)=1} \frac{\chi_{8d}(nl) \sigma_{\alpha,\beta,\gamma}(n)}{n^{\thalf}} V_{\alpha, \beta,\gamma} \left(\frac{n}{((ab)^2 d)^{3/2}}\right).
\end{equation}
Using the definition of $V$ as an integral representation \eqref{eq:Vdef}, we get that the inner sum over $n$ above is
\begin{equation}
\label{eq:ncoprime}
 \sum_{(n,2ab)=1} \frac{\chi_{8d}(nl) \sigma_{\alpha,\beta,\gamma}(n)}{n^{\thalf}} \frac{1}{2\pi i} \int_{(2)} \frac{G(s)}{s} g_{\alpha,\beta,\gamma}(s) \frac{((ab)^2 d)^{3s/2}}{n^s} ds
\end{equation}
%
%
This sum over $n$ almost produces a product of $L$-functions but the coprimality restriction $(n,2ab) = 1$ slightly perturbs it.  Ignoring this annoyance for the present discussion, we see that $M_R$ should be related to
\begin{equation}
\label{eq:recursivefake}
\sum_{\substack{(a,2l) = 1 \\ a > Y}} \mu(a) \sum_{(b,2l)=1} \frac{1}{2\pi i} \int_{(\varepsilon)} \frac{G(s)}{s} g_{\alpha,\beta,\gamma}(s) M_{\alpha+s, \beta+s, \gamma+s}(l) ds,
\end{equation}
where $M_{\alpha+s,\beta+s,\gamma+s}(l)$ is the same moment with which we started but with a different weight function having support with integers of size $X/(ab)^2$.  Then we can apply Conjecture \ref{conj} to this sum, expressing it as a sum of eight main terms (one for each choice of $\epsilon_1, \epsilon_2, \epsilon_3 \in \{\pm 1\}$), plus an error term.

After making these arguments rigorous (treating the coprimality restriction $(n, 2ab) = 1$, etc.), these ideas lead to
\begin{mylemma}  
\label{lemma:MRresult}
If Conjecture \ref{conj} holds with a parameter $f > 1/2$, then
\begin{equation}
\label{eq:MRresult}
 M_R = \sum_{\epsilon_1, \epsilon_2, \epsilon_3 \in \{\pm 1\}} M_R(\epsilon_1, \epsilon_2, \epsilon_3) + O(\sqrt{l} \frac{X^{f + \varepsilon}}{Y^{2f-1}}),
\end{equation}
where $M_R(\epsilon_1, \epsilon_2, \epsilon_3)$ is defined below as \eqref{eq:MRmainterms}.
\end{mylemma}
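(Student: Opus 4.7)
The plan is to express $M_R$ as a linear combination of shifted moments to which Conjecture \ref{conj} applies directly, then sum up the resulting main and error terms. Substituting the Mellin representation \eqref{eq:Vdef} of $V_{\alpha,\beta,\gamma}$ into \eqref{eq:MRdstar}, the inner sum over $n$ becomes a contour integral of $\mathcal{D}_d(s;ab) = \sum_{(n,2ab)=1} \chi_{8d}(nl) \sigma_{\alpha,\beta,\gamma}(n) n^{-\thalf - s}$ against $G(s) g_{\alpha,\beta,\gamma}(s) ((ab)^2 d)^{3s/2}/s$. Since the constraint $(n,2ab)=1$ is an Euler-product restriction, we factor
\[
\mathcal{D}_d(s;ab) = \chi_{8d}(l) \, L_\alpha(s) L_\beta(s) L_\gamma(s) \, E_{ab}(s,\chi_{8d}),
\]
with $L_\lambda(s) = L(\thalf + \lambda + s, \chi_{8d})$ and
\[
E_{ab}(s,\chi_{8d}) = \prod_{p \mid 2ab} \prod_{\lambda \in \{\alpha,\beta,\gamma\}} \bigl(1 - \chi_{8d}(p) p^{-\thalf - s - \lambda}\bigr).
\]
Using $\chi_{8d}(p)^2 = 1$ for $p \nmid 2d$, this correction factor expands as $E_{ab}(s,\chi_{8d}) = \sum_{m} c_m(s) \chi_{8d}(m)$, where $m$ runs over squarefree divisors of $\mathrm{rad}(2ab)$, and the coefficients satisfy $|c_m(s)| \ll m^{-\thalf + \varepsilon}$ uniformly for $\mathrm{Re}(s)$ near zero (each prime $p \mid m$ contributes a factor of size $p^{-\thalf}$, and there are at most $O((ab)^\varepsilon)$ such $m$).

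After this expansion, the $d$-sum in each term is a moment \eqref{eq:conj} of the required form: namely
\[
\sumstar_{(d,2)=1} L_\alpha(s) L_\beta(s) L_\gamma(s) \, \chi_{8d}(lm) \, F_{ab}(d), \qquad F_{ab}(x) := F(x(ab)^2),
\]
where $F_{ab}$ has support on a dyadic interval of scale $X_{ab} := X/(ab)^2$ and satisfies \eqref{eq:Fproperty} with $X$ replaced by $X_{ab}$. Since $(l,ab)=1$ (from the outer coprimality conditions), $lm$ has squarefree part $l_1 m$. Applying Conjecture \ref{conj} with parameter $f$ and shifts $\alpha+s, \beta+s, \gamma+s$ yields eight main terms indexed by $(\epsilon_1,\epsilon_2,\epsilon_3) \in \{\pm 1\}^3$ plus an error bounded by $O\bigl(X_{ab}^f \sqrt{l_1 m} \,(l m X_{ab})^\varepsilon\bigr)$. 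Crucially, the coefficient bound $|c_m(s)| \ll m^{-\thalf + \varepsilon}$ exactly cancels the $\sqrt{m}$ factor, giving a per-$m$ error of $O\bigl(X_{ab}^f \sqrt{l_1} \, (abX)^\varepsilon\bigr)$.

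Summing the error contributions over $m$, $a > Y$, and $b$ (the support of $F_{ab}$ forces $ab \ll \sqrt{X}$), one obtains
\[
\ll \sqrt{l_1} \, X^{f+\varepsilon} \sum_{a > Y} a^{-2f+\varepsilon} \sum_{b \geq 1} b^{-2f+\varepsilon} \ll \sqrt{l} \, \frac{X^{f+\varepsilon}}{Y^{2f-1}},
\]
where convergence of the $b$-sum requires exactly the hypothesis $f > \thalf$. The main terms, summed over $a > Y$, $b$ (coprime to $2l$), and $m \mid \mathrm{rad}(2ab)$, and then integrated in $s$ (shifting the contour to $\mathrm{Re}(s) = \varepsilon$ or similar, using that the choice of $G$ in Remark \ref{remark:zero} kills all encountered poles), assemble into the eight quantities $M_R(\epsilon_1,\epsilon_2,\epsilon_3)$ of \eqref{eq:MRmainterms}.

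The main obstacle is the bookkeeping for the assembly of main terms: each application of Conjecture \ref{conj} at parameter $lm$ produces an arithmetic factor $A_{\epsilon_1(\alpha+s), \epsilon_2(\beta+s), \epsilon_3(\gamma+s)}(lm)$, and one must verify that the sum over $m$ of these, weighted by $c_m(s)$, collapses the Euler product at primes dividing $2ab$ back into a clean expression compatible with the formula defining $M_R(\epsilon_1,\epsilon_2,\epsilon_3)$. A secondary technicality is controlling the $s$-integral uniformly in $a, b$: this requires polynomial growth bounds on $A_{\alpha+s,\beta+s,\gamma+s}(lm)$ along vertical lines, which follow from the explicit factorization of $A$ in terms of zeta functions and the absolute convergence of the Euler product for $B_{\alpha+s,\beta+s,\gamma+s}$, together with the rapid decay of $G$.
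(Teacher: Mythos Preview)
Your approach is essentially the paper's: remove the coprimality condition on $n$ by inserting the finite Euler product correction, then apply Conjecture~\ref{conj} to the resulting twisted shifted moment. The error estimate and the definition of the eight main terms proceed exactly as you describe.

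There is one slip. You reduce $E_{ab}$ to a sum over \emph{squarefree} $m$ using $\chi_{8d}(p)^2=1$ for $p\nmid 2d$, but nothing in \eqref{eq:MRdstar} forces $(d,ab)=1$; for primes $p\mid ab$ with $p\mid d$ one has $\chi_{8d}(p)=0$, so your coefficients $c_m(s)$ would be $d$-dependent and the conjecture could not be applied termwise. The paper avoids this by \emph{not} collapsing to squarefree twists: it expands
\[
E_{ab}(s,\chi_{8d})=\sum_{r_1,r_2,r_3\mid ab}\frac{\mu(r_1)\mu(r_2)\mu(r_3)}{r_1^{\half+\alpha+s}r_2^{\half+\beta+s}r_3^{\half+\gamma+s}}\,\chi_{8d}(r_1r_2r_3),
\]
which is valid regardless of $(d,ab)$, and applies Conjecture~\ref{conj} with the (possibly non-squarefree) twist $lr_1r_2r_3$. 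The error then carries $\sqrt{(lr_1r_2r_3)^*}\le \sqrt{l_1 r_1r_2r_3}$, which is cancelled by the coefficient $(r_1r_2r_3)^{-\half+\varepsilon}$ exactly as in your argument. Your ``main obstacle'' (collapsing the $m$-sum) thus disappears: $M_R(\epsilon_1,\epsilon_2,\epsilon_3)$ is simply \emph{defined} as the resulting sum over $a,b,r_1,r_2,r_3$, namely \eqref{eq:MRmainterms}, with no further simplification required at this stage.

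One further technicality you gloss over: Conjecture~\ref{conj} constrains the imaginary parts of the shifts, so one cannot apply it directly for $s$ on the full vertical line $\mathrm{Re}(s)=\varepsilon$. The paper first truncates to $|\mathrm{Im}(s)|\le (\log(X/(ab)^2))^2$ (the tail being negligible by the rapid decay of $G$), applies the conjecture, and then restores the full line.
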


\subsection{How the main terms combine}
Of the eight main terms appearing in Lemma \ref{lemma:MRresult}, only four turn out to contribute significantly.  We suspect that finding an error better than $X^{3/4}$ would (at least) require dealing with all eight of these terms.  The smaller ones are bounded with the following
\begin{mylemma}
\label{lemma:MRbound}
 If exactly two of the $\epsilon_i$'s are $-1$, then for a special choice of $G(s)$ described in Remark \ref{remark:zero}, we have
\begin{equation}
\label{eq:twonegeps}
 M_R(\epsilon_1, \epsilon_2, \epsilon_3) \ll  Y X^{3/4} (l X)^{\varepsilon},
\end{equation}
and furthermore,
\begin{equation}
\label{eq:threenegeps}
 M_R(-1, -1, -1) \ll X^{3/4} (l X)^{\varepsilon}.
\end{equation}
\end{mylemma}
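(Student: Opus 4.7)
The plan is to take the explicit contour–integral representation of $M_R(\epsilon_1,\epsilon_2,\epsilon_3)$ that arises from substituting the main term of Conjecture \ref{conj} into the schematic \eqref{eq:recursivefake}, and to bound it directly by a right-ward contour shift followed by routine Stirling plus convexity estimates. Writing $\delta_i = 1$ if $\epsilon_i = -1$ and $0$ otherwise, setting $k = \delta_1+\delta_2+\delta_3 \in \{2,3\}$, and putting $F_{ab}(x) := F(x(ab)^2)$ (supported on $[X/(2(ab)^2),3X/(ab)^2]$), the term $M_R(\epsilon)$ takes the form
\begin{equation*}
 \sum_{\substack{a>Y\\(a,2l)=1}} \mu(a) \sum_{(b,2l)=1} \frac{1}{2\pi i}\int_{(c)} \frac{G(s)}{s}\, g_{\alpha,\beta,\gamma}(s)\, \frac{A_{\epsilon_1(\alpha+s),\epsilon_2(\beta+s),\epsilon_3(\gamma+s)}(l)\,\Gamma^{\delta_1,\delta_2,\delta_3}_{\alpha+s,\beta+s,\gamma+s}}{2\zeta_2(2)\sqrt{l_1}}\,\widetilde{F_{ab}}\bigl(1-\textstyle\sum_i \delta_i(\lambda_i+s)\bigr)\,\Phi_{ab}(s)\, ds,
\end{equation*}
where $(\lambda_1,\lambda_2,\lambda_3)=(\alpha,\beta,\gamma)$ and $\Phi_{ab}(s)\ll (ab)^\varepsilon$ is the Euler–product correction accounting for the coprimality $(n,2ab)=1$ in \eqref{eq:ncoprime}. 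The crucial analytic input is $\widetilde{F_{ab}}(w) = (ab)^{-2w}\widetilde{F}(w) \ll_{C} (X/(ab)^2)^{\mathrm{Re}(w)}(1+|w|)^{-C}$.

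Now shift the contour from $\mathrm{Re}(s)=c$ to $\mathrm{Re}(s)=A$ for a suitable $A\in(0,1/2)$. The only poles crossed come from the zeta factors of $A_{\epsilon_1(\alpha+s),\epsilon_2(\beta+s),\epsilon_3(\gamma+s)}(l)$ associated to indices $i$ with $\epsilon_i=-1$: single poles at $s=-\lambda_i$ from $\zeta(1-2(\lambda_i+s))$, and at $s=-(\lambda_i+\lambda_j)/2$ from $\zeta(1-(\lambda_i+\lambda_j)-2s)$, all lying within $O(1/\log X)$ of the origin. By the prescription of $G(s)$ in Remark \ref{remark:zero}, $G$ vanishes at each of these points, so no residue contributes and $M_R(\epsilon)$ equals the integral on the new line. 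On $\mathrm{Re}(s)=A$, Stirling gives $\Gamma^{\delta_1,\delta_2,\delta_3}_{\alpha+s,\beta+s,\gamma+s}\ll (1+|t|)^{-kA+O(\varepsilon)}$ and $g_{\alpha,\beta,\gamma}(s)\ll (1+|t|)^{3A/2}$, while the convexity bound for $\zeta$ combined with absolute convergence of the Euler product $B$ in \eqref{eq:Aoriginal} yields $A_{\cdots}(l)/\sqrt{l_1}\ll l^\varepsilon(1+|t|)^{O(A)}$; all polynomial growth in $t$ is absorbed by the rapid decay of $G$ and $\widetilde{F}$, so the $t$-integral is $\ll l^\varepsilon (ab)^\varepsilon (X/(ab)^2)^{1-kA}$.

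It remains to sum over $a>Y$ and $b\geq 1$, noting that $ab\ll\sqrt{X}$ from the support of $F_{ab}$. For $k=2$, choose $A=1/8$: the summand is $\ll (ab)^{-3/2}X^{3/4}(lX)^\varepsilon$ and $\sum_{a>Y}\sum_{b\geq 1}(ab)^{-3/2}\ll Y^{-1/2}$, giving $M_R(\epsilon)\ll X^{3/4}Y^{-1/2}(lX)^\varepsilon\leq YX^{3/4}(lX)^\varepsilon$ for $Y\geq 1$, which is \eqref{eq:twonegeps}. For $k=3$, choose $A=1/4$: the summand is $\ll (ab)^{-1/2}X^{1/4}(lX)^\varepsilon$ and $\sum_{a,b\geq 1,\,ab\leq\sqrt{X}}(ab)^{-1/2}\ll X^{1/4}\log X$ independently of the constraint $a>Y$, so $M_R(-1,-1,-1)\ll X^{1/2}(lX)^\varepsilon\ll X^{3/4+\varepsilon}$, confirming \eqref{eq:threenegeps}.

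The main obstacle is the contour-shift step: one must cross-check that \emph{every} pole of the integrand encountered in the strip $0<\mathrm{Re}(s)<A$ indeed lies at a zero of $G$, which requires matching the explicit list of zeta factors in the meromorphic continuation of $A_{\epsilon_1(\alpha+s),\epsilon_2(\beta+s),\epsilon_3(\gamma+s)}(l)$ developed in Lemma \ref{lemma:A} with the prescription in Remark \ref{remark:zero}. A secondary technicality is verifying $\Phi_{ab}(s)\ll (ab)^\varepsilon$ uniformly on $\mathrm{Re}(s)=A$, which is a routine Euler-product calculation at the primes dividing $2ab$; these primes contribute bounded local factors whose product is controlled by the usual divisor-bound estimate.
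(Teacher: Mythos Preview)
Your argument contains a genuine error in the Mellin-transform step, and once it is repaired the direct contour-shift approach no longer reaches $X^{3/4}$ for $k=2$.

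The weight function in the inner moment $M_{\alpha+s,\beta+s,\gamma+s}$ is \emph{not} $F_{ab}(d)=F(d(ab)^2)$ but rather $F_{3s/2;a^2b^2}(d)=(d(ab)^2)^{3s/2}F(d(ab)^2)$; the extra factor $d^{3s/2}$ comes from the $(a^2d)^{3s/2}$ in \eqref{eq:ncoprime}. Consequently (see the computation just below \eqref{eq:MRmainterms}) the Mellin transform appearing in $M_R(\epsilon)$ is
\[
\widetilde{F_{3s/2;a^2b^2}}(w)=(ab)^{-2w}\,\widetilde{F}\!\bigl(\tfrac{3s}{2}+w\bigr),\qquad w=1-\textstyle\sum_i\delta_i(\lambda_i+s),
\]
so the argument of $\widetilde{F}$ has real part $1+(3/2-k)A$, not $1-kA$. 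For $k=2$ this is $1-A/2$, and your bound on the $t$-integral should read $X^{1-A/2}(ab)^{-2+4A}(lX)^{\varepsilon}$ rather than $(X/(ab)^2)^{1-2A}$. With your choice $A=1/8$ the $X$-power is $X^{15/16}$, far from $X^{3/4}$; to get $X^{3/4}$ one must push $A\to 1/2^-$, at which point the exponent of $(ab)$ tends to $0$ and both the $a$- and $b$-sums diverge. (For $k=3$ the same correction gives $X^{1-3A/2}(ab)^{-2+6A}$; your $A=1/4$ then yields $X^{7/8}$, again too weak.)

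The paper's proof handles exactly this obstruction by the splitting $M_R(-1,-1,1)=M'-M''$: one extends the sum over $a$ to all positive integers, whereupon the M\"obius sum $\sum_{a|c}\mu(a)$ forces $ab=1$ in $M'$, eliminating the divergent $a,b$-sums entirely and allowing the shift to $\sigma=\tfrac12-\varepsilon$; the compensating term $M''$ has $a\leq Y$ and is bounded trivially, producing the factor $Y$ in \eqref{eq:twonegeps}. For $k=3$ no splitting is needed since $\sigma=\tfrac16-\delta$ already gives both $X^{3/4}$ and convergent $(ab)$-sums. Your approach misses this M\"obius cancellation step, which is the essential idea.
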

This dispenses with four of the terms on the right hand side of \eqref{eq:MRresult}, leaving four that do contribute.  The remaining ones combine in a very pleasant way with the four main terms of Lemma \ref{lemma:MNresult}.  The term $M_N(k=0)$ combines with $M_R(1,1,1)$, while $M_N(k=\square, *)$ with $* = \alpha, \beta$, $\gamma$ combines with $M_R(-1,1,1)$, $M_R(1,-1,1)$, $M_R(1,1,-1)$, respectively.  The combination is as pleasant as one could hope; each corresponding pair of $M_N$ and $M_R$ give almost identical expressions, the only difference being $M_N$ has $a \leq Y$ while $M_R$ has $a > Y$, so that combining them simply removes this restriction on the sum over $a$ altogether.  This matching of $M_N$ and $M_R$ is the only truly difficult part of this work, as it requires substantial calculation to see that these very different expressions actually agree.  Having done the simpler case of the first moment \cite{Y}, it was easier to predict that terms should combine in this way.

Each of the four combined pairs $M_N$ and $M_R$ is given as a certain contour integral.  By shifting the contour to the left, one crosses a pole at $s=0$ only; the residue at this point gives one of the eight main terms on the right hand side of \eqref{eq:conj}.  The integral along the new contour is estimated with absolute values.  The end result is
\begin{mylemma}
\label{lemma:MNk0andMR111}
For a special choice of $G(s)$ described in Remark \ref{remark:zero}, we have
\begin{equation}
\label{eq:MNk0andMR111}
 M_N(k=0) + M_R(1,1,1) = A_{\alpha, \beta, \gamma}(l) \frac{\widetilde{F}(1)}{2 \zeta_2(2) \sqrt{l_1}} + O(X^{1/4 + \varepsilon} l^{\varepsilon}),
\end{equation}
and furthermore,
\begin{equation}
\label{eq:MNksquareandMR-111}
 M_N(k=\square, \alpha) + M_R(-1,1,1) = A_{- \alpha, \beta,\gamma}(l) \Gamma_{\alpha} \frac{ \widetilde{F}(1- \alpha )}{2 \zeta_2(2) \sqrt{l_1}} + O(X^{3/4 + \varepsilon} l^{\varepsilon}),
\end{equation}
and similarly for the other two terms (with $\alpha$ replaced by either $\beta$ or $\gamma$).
\end{mylemma}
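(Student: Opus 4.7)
The plan is to verify that $M_N(k=0)$ and $M_R(1,1,1)$ admit representations as contour integrals whose integrands agree, with the only distinction being that the outer sum over $a$ is restricted to $a\leq Y$ in the former and to $a>Y$ in the latter. This agreement is the central, and most delicate, step: the two expressions arise from very different procedures, namely Poisson summation (Lemma \ref{lemma:poisson}) applied to the $d$-sum in the case of $M_N(k=0)$, versus applying Conjecture \ref{conj} inductively inside the contour integral \eqref{eq:recursivefake} in the case of $M_R(1,1,1)$. Establishing this matching amounts to a combinatorial identity comparing the Dirichlet series produced by the Gauss-type sums evaluated via Lemma \ref{lemma:Gk} with the arithmetical factor $A_{\alpha+s,\beta+s,\gamma+s}(l)$ from the CFKRS main term, after carefully tracking the Euler factors at primes dividing $ab$ introduced by the coprimality condition $(n,2ab)=1$ in \eqref{eq:MRdstar}. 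This is the main obstacle, and the step where the remark in the introduction about computer verification of the combinatorial simplifications is most likely to be invoked.

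Once the matching identity is in hand, summing the complementary ranges $a\leq Y$ and $a>Y$ yields a single contour integral in which the sum over $a$ is unrestricted (odd and coprime to $l$), producing a clean Dirichlet series factor that assembles with the rest of the integrand into $A_{\alpha+s,\beta+s,\gamma+s}(l)$ times explicit gamma and $\widetilde{F}$ factors. In particular the dependence on $Y$ disappears, which is the whole point of the recursive device. The integrand is now meromorphic in $\text{Re}(s)>-\half+\varepsilon$ by virtue of the choice of $G(s)$ in Remark \ref{remark:zero}, which has been engineered both to cancel the poles of the arithmetical factor and to supply the denominator zeta factors appearing in Lemma \ref{lemma:A}.

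With this preparation the plan is routine: shift the contour from $\text{Re}(s)=\varepsilon$ to $\text{Re}(s)=-\tfrac14+\varepsilon$, crossing only the simple pole at $s=0$. The residue at $s=0$ gives exactly the desired main term, since $G(0)=1$ and the prefactor collapses to $A_{\alpha,\beta,\gamma}(l)\widetilde{F}(1)/(2\zeta_2(2)\sqrt{l_1})$, matching the $\epsilon_1=\epsilon_2=\epsilon_3=+1$ summand of \eqref{eq:conj}. On the new line, Stirling applied to $g_{\alpha,\beta,\gamma}(s)$ (contributing $X^{3s/2}$ through the $n/(a^2d)^{3/2}$ scaling, hence the factor $X^{-3/8}$ at $\text{Re}(s)=-\tfrac14$), the polynomial bound for $A_{\alpha+s,\beta+s,\gamma+s}(l)$ in $l$, and the rapid decay of $G$ in $|\text{Im}(s)|$ together yield an estimate of size $X^{1/4+\varepsilon}l^{\varepsilon}$, proving \eqref{eq:MNk0andMR111}.

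For \eqref{eq:MNksquareandMR-111} the strategy is identical, but with $M_N(k=\square,\alpha)$ paired with $M_R(-1,1,1)$ via an analogous combinatorial identity adapted to the off-diagonal origin of the $k=\square$ term; the residue at $s=0$ now produces $A_{-\alpha,\beta,\gamma}(l)\Gamma_{\alpha}\widetilde{F}(1-\alpha)/(2\zeta_2(2)\sqrt{l_1})$ on account of the extra $\Gamma_{\alpha}$ and the shift $\alpha\mapsto -\alpha$ built into the off-diagonal main term. The weaker error $X^{3/4+\varepsilon}$ reflects that the admissible contour can only be shifted to roughly $\text{Re}(s)=-\varepsilon$ before encountering the arithmetical obstructions peculiar to the off-diagonal configuration (additional gamma factors, and the absence of cancellation that was available in the diagonal case). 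The two remaining identities, with $\beta$ or $\gamma$ in place of $\alpha$, follow at once by the symmetry of the construction.
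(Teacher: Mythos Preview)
Your overall strategy is exactly the paper's: show the two integrands agree Euler factor by Euler factor (with computer assistance), combine the ranges $a\leq Y$ and $a>Y$, and then shift the contour to pick up the pole at $s=0$. Two points, however, need correction.

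First, the contour shifts. In both parts the paper moves to $\text{Re}(s)=-\tfrac12+\varepsilon$, not to $-\tfrac14+\varepsilon$ or $-\varepsilon$. The $X$-power comes from $\widetilde{F}$, not from Stirling on $g_{\alpha,\beta,\gamma}$: in Part~1 the weight is $\widetilde{F}(1+\tfrac{3s}{2})$, giving $X^{1+3\sigma/2}=X^{1/4}$ at $\sigma=-\tfrac12$; in Part~2 the weight is $\widetilde{F}(1+\tfrac{s}{2}-\alpha)$, giving $X^{1+\sigma/2}=X^{3/4}$ at the \emph{same} $\sigma=-\tfrac12$. So the discrepancy in error sizes is due to the different $\widetilde{F}$ arguments, not to a shallower admissible shift. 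Your shift to $-\tfrac14$ in Part~1 would only yield $X^{5/8}$, and your shift to $-\varepsilon$ in Part~2 would yield essentially $X$.

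Second, and more substantively, in Part~2 you are missing a genuine ingredient. The expression \eqref{eq:MNksquarealpha} for $M_N(k=\square,\alpha)$ carries the factors $\pi^{-(s+\alpha)}\Gamma(s+\alpha)(\cos+\sin)(\tfrac{\pi}{2}(s+\alpha))(2^{1-2s-2\alpha}-1)$ inherited from Poisson summation, whereas $M_R(-1,1,1)$ involves $\Gamma_{\alpha+s}$ as in \eqref{eq:Gammadef}. These do not match on the nose, and no purely combinatorial identity of Euler factors will fix it. The paper inserts an explicit Archimedean identity (its Lemma~\ref{lemma:archcalc}),
\[
(2^{1-2u}-1)(\cos+\sin)\!\left(\tfrac{\pi u}{2}\right)\pi^{-u}\Gamma(u)=2\,\Gamma_u\,\frac{\zeta_2(1-2u)}{\zeta(2u)},
\]
with $u=s+\alpha$, to convert the Poisson-side weight into $\Gamma_{\alpha+s}$ times explicit zeta ratios; only after this transformation can one formulate and verify the Dirichlet-series identity $D_N(k=\square,\alpha;s)=D_R(-1,1,1;s)$. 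Without this step the ``analogous combinatorial identity'' you invoke cannot be stated, let alone checked.
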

By combining Lemmas \ref{lemma:MNresult}, \ref{lemma:MRresult}, \ref{lemma:MRbound}, and \ref{lemma:MNk0andMR111}, we get that $M_1$ equals the sum of the four main terms on the right hand side of \eqref{eq:conj}, namely those with at most one of the $\epsilon_i = -1$, plus an error of size
\begin{equation}
\label{eq:M1error}
 \ll l^{1/2 + \varepsilon} X^{3/4 + \varepsilon} Y + l^{1/2 + \varepsilon} \frac{X^{f + \varepsilon}}{Y^{2f-1}}.
\end{equation}
We then obtain an asymptotic for $M_{-1}$ by the procedure described following \eqref{eq:M-1def}.  An examination of the form of the main terms on the right hand side of \eqref{eq:conj} shows that $M_{-1}$ accounts for the remaining four main terms in \eqref{eq:conj}, namely those with at least two of the $\epsilon_i = -1$, plus the same error as given in \eqref{eq:M1error}.  Thus, taken together, these results show
\begin{mylemma}
\label{lemma:selfreference}
If Conjecture \ref{conj} holds with some $f > 1/2$, then 
 \begin{equation}
  M_1 + M_{-1} = M.T. + O(l^{1/2 + \varepsilon} X^{3/4 + \varepsilon} Y) + O(\frac{X^{f + \varepsilon}}{Y^{2f-1}}),
 \end{equation}
where $M.T.$ is the main term on the right hand side of \eqref{eq:conj}.
\end{mylemma}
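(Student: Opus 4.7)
The plan is to assemble the four preceding lemmas and then transfer the analysis from $M_1$ to $M_{-1}$ via the symmetry described immediately after \eqref{eq:M-1def}. Since $M_1 = M_N + M_R$, combining Lemma \ref{lemma:MNresult} and Lemma \ref{lemma:MRresult} directly gives
\[
M_1 = M_N(k=0) + \sum_{\ast \in \{\alpha,\beta,\gamma\}} M_N(k=\square,\ast) + \sum_{(\epsilon_1,\epsilon_2,\epsilon_3) \in \{\pm 1\}^3} M_R(\epsilon_1,\epsilon_2,\epsilon_3) + O\!\left(l^{1/2+\varepsilon} X^{3/4+\varepsilon} Y\right) + O\!\left(l^{1/2+\varepsilon} \frac{X^{f+\varepsilon}}{Y^{2f-1}}\right).
\]

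I would then invoke Lemma \ref{lemma:MRbound} to discard the four terms $M_R(\epsilon_1,\epsilon_2,\epsilon_3)$ having at least two $\epsilon_i = -1$: each is $O(Y X^{3/4}(lX)^{\varepsilon})$ and is absorbed into the first error term above. The remaining four $M_R$ terms pair with the four $M_N$ main terms as outlined in the subsection ``How the main terms combine'', and Lemma \ref{lemma:MNk0andMR111} evaluates each pair as one of the main terms of \eqref{eq:conj}, specifically those with at most one $\epsilon_i = -1$, with pairing errors $O(X^{3/4+\varepsilon} l^{\varepsilon})$ that are again absorbed. This proves that $M_1$ equals the four ``at-most-one-negative'' main terms of \eqref{eq:conj} plus the error \eqref{eq:M1error}.

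For $M_{-1}$, I would run the same chain of lemmas verbatim after applying the substitution described following \eqref{eq:M-1def}: replace each shift by its negative, replace $F(x)$ by $F_{-\alpha,-\beta,-\gamma}(x) = x^{-\alpha-\beta-\gamma} F(x)$, and multiply the whole output by $\Gamma_{\alpha,\beta,\gamma}$. The new weight still obeys \eqref{eq:Fproperty} up to $X^{\varepsilon}$ losses harmlessly absorbed into the error (since the shifts are $O(1/\log X)$), so Lemmas \ref{lemma:MNresult}--\ref{lemma:MNk0andMR111} apply unchanged and yield exactly \eqref{eq:M1error}-sized errors. The key bookkeeping step is to verify that the transformation carries the four ``at-most-one-negative'' main terms for the negated shifts onto the four ``at-least-two-negative'' terms of \eqref{eq:conj} for the original shifts: the arithmetic factor $A$ sees its sign pattern flipped, the Mellin transform of $F_{-\alpha,-\beta,-\gamma}$ evaluated at $1 - \delta_1(-\alpha) - \delta_2(-\beta) - \delta_3(-\gamma)$ becomes $\widetilde{F}(1 - (1-\delta_1)\alpha - (1-\delta_2)\beta - (1-\delta_3)\gamma)$, and the prefactor $\Gamma_{\alpha,\beta,\gamma}$ combines with the $\Gamma$-factors emerging from the negated-parameter main terms to produce exactly the complementary $\Gamma^{1-\delta_1,1-\delta_2,1-\delta_3}_{\alpha,\beta,\gamma}$.

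Adding the $M_1$ and $M_{-1}$ asymptotics then reconstructs all eight main terms on the right-hand side of \eqref{eq:conj} with total error bounded by the sum of two copies of \eqref{eq:M1error}, which is absorbed into the stated error of the lemma. The only genuinely hard step is the matching already performed inside Lemma \ref{lemma:MNk0andMR111}; the present lemma is purely an assembly, and its sole subtlety is the sign-flip verification in the $M_{-1}$ transfer.
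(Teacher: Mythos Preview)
Your proposal is correct and follows exactly the paper's own argument: assemble Lemmas \ref{lemma:MNresult}--\ref{lemma:MNk0andMR111} to handle $M_1$, then transfer to $M_{-1}$ via the sign-flip symmetry described after \eqref{eq:M-1def}. Your explicit bookkeeping of how the Mellin transform and $\Gamma$-factors transform under the substitution is more detailed than the paper's one-line ``an examination of the form of the main terms'', but the content is identical.
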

Choosing $Y = X^{\frac{f-\frac34}{2f}}$ gives Theorem \ref{thm:recursive}.
It has been so far left implicit that the error terms in the above lemmas are uniform with respect to the shift parameters lying in the appropriate punctured rectangles described in Section \ref{section:AFE}.  In Lemma \ref{lemma:selfreference} we can claim the same error term for shift parameters in the non-punctured rectangles $|\text{Re}(\lambda)| \leq c_1/\log{X}$, $|\text{Im}(\lambda) \leq c_2 X^{\varepsilon}$ by the following reasoning.  We note that $M_1 + M_{-1}$ is analytic in terms of the shift parameters in this larger domain; by work of \cite{CFKRS}, the main term in \eqref{eq:conj} has this same property.  Therefore, so does the error term.  The maximum modulus principle shows the error term is maximized in the punctured rectangle domain where we initially proved the result, and so the uniformity is extended to the larger domain as claimed.

We prove Lemma \ref{lemma:MRresult} in Section \ref{section:proofofMRresult}, Lemma \ref{lemma:MRbound} in Section \ref{section:proofofMRbound}, Lemma \ref{lemma:MNresult} in Section \ref{section:proofofMNresult}, and Lemma \ref{lemma:MNk0andMR111} in Section \ref{section:proofofMNk0andMR111}.  Taking these four results for granted, we already showed that Lemma \ref{lemma:selfreference} is valid, and hence Theorem \ref{thm:recursive} holds.  The five authors \cite{CFKRS} clearly explain how to let the shift parameters tend to zero and hence deduce Theorem \ref{thm:mainthm} from Theorem \ref{thm:recursive}.

\section{Calculating with $M_R$}
\label{section:CalculatingMR}
In this section we obtain Lemma \ref{lemma:MRresult} and Lemma \ref{lemma:MRbound}.  

\subsection{Proof of Lemma \ref{lemma:MRresult}}
\label{section:proofofMRresult}
\begin{proof}
Our first subsidiary goal is to find the rigorous analog of \eqref{eq:recursivefake}.  This formula is
\begin{multline}
\label{eq:MRcoprime}
M_R = \sum_{\substack{(a,2l) = 1 \\ a > Y}}  \mu(a)  \sum_{(b,2l)=1} \sum_{r_1, r_2, r_3 | ab} \frac{\mu(r_1) \mu(r_2) \mu(r_3) }{r_1^{\half + \alpha} r_2^{\half + \beta} r_3^{\half + \gamma}} \frac{1}{2\pi i} \int_{(\varepsilon)} \sumstar_{(d,2)=1}  F(d a^2 b^2) \chi_{8d}(lr_1 r_2 r_3) 
\\
  L(\tfrac12 + \alpha + s, \chi_{8d}) L(\tfrac12 + \beta + s, \chi_{8d}) L(\tfrac12 + \gamma + s, \chi_{8d}) (d a^2 b^2)^{3s/2} (r_1 r_2 r_3)^{-s} \frac{G(s)}{s} g_{\alpha,\beta,\gamma}(s) ds.
\end{multline}
To prove this, we pick up with \eqref{eq:MRdstar}, recalling \eqref{eq:ncoprime} is the sum over $n$.  Then note
\begin{multline}
\label{eq:ncoprimeformula}
 \sum_{(n,2ab)=1} \frac{\chi_{8d}(n) \sigma_{\alpha,\beta,\gamma}(n)}{n^s} = \prod_{p \nmid 2ab} 
( 1- \frac{\chi_{8d}(p)}{p^{\alpha + s}})^{-1} 
( 1- \frac{\chi_{8d}(p)}{p^{\beta + s}})^{-1}
( 1- \frac{\chi_{8d}(p)}{p^{\gamma + s}})^{-1}
\\
=
L(\alpha + s, \chi_{8d}) L(\beta + s, \chi_{8d}) L(\gamma + s, \chi_{8d}) \sum_{r_1, r_2, r_3 | ab} \frac{\mu(r_1) \mu(r_2) \mu(r_3) \chi_{8d}(r_1 r_2 r_3)}{r_1^{\alpha + s} r_2^{\beta + s} r_3^{\gamma + s}}.
\end{multline}
Inserting \eqref{eq:ncoprimeformula} into \eqref{eq:ncoprime}, and then into \eqref{eq:MRdstar}, gives \eqref{eq:MRcoprime}, as desired.  We were able to move the line of integration to $\sigma = \varepsilon$ since the Dirichlet $L$-functions have no poles.  We take $\varepsilon = 1/\log{X}$.

Now we proceed directly to the proof of Lemma \ref{lemma:MRresult}.  Note that the inner sum over $d$ appearing in \eqref{eq:MRcoprime}, namely
\begin{equation}
 \sumstar_{(d,2)=1}  \chi_{8d}(lr_1 r_2 r_3) L(\tfrac12 + \alpha + s, \chi_{8d}) L(\tfrac12 + \beta + s, \chi_{8d}) L(\tfrac12 + \gamma + s, \chi_{8d}) (d a^2 b^2)^{3s/2}  F(d a^2 b^2),
\end{equation}
is of the form $M_{\alpha+s,\beta+s, \gamma+s}(lr_1 r_2 r_3)$, but with a new weight function with smaller support ($d \asymp X/(ab)^2$).
Write
\begin{equation}
 (da^2 b^2)^{3s/2}  F(d a^2 b^2) =  F_{\frac{3s}{2};a^2b^2}(d),
\end{equation}
where $F_{\nu;y}(x) = (xy)^{\nu} F(xy)$.  

Next we apply Conjecture \ref{conj} to this inner sum over $d$.  Technically, we need to first truncate the $s$-integral so that $|\text{Im}(s)| \leq (\log(X/a^2b^2))^2$.  When $(ab)^2 \leq X^{1-\varepsilon}$, then the exponential decay of the integrand shows that the error incurred by this truncation is negligible ($\ll X^{-100}$, say).  Otherwise, when $(ab)^2 \geq X^{1-\varepsilon}$ then the sum over $d$ is almost bounded so that the convexity bound $L(1/2 + \alpha + s, \chi_{8d}) \ll ((1+|s|)|d|)^{1/4}$ is sufficient to show these terms give $O(X^{\half + \varepsilon})$.  After plugging in Conjecture \ref{conj} to the truncated integral, then the same argument works to extend the integral back to the whole vertical line, without introducing a new error.  By this procedure, we get $M_R$ as the sum of eight main terms plus an error of size
\begin{equation}
 \ll \sum_{\substack{a > Y}} |\mu(a)| \sum_{(b,2)=1}  \sum_{r_1, r_2, r_3 | ab} \frac{|\mu(r_1) \mu(r_2) \mu(r_3)|}{(r_1 r_2 r_3)^{\half}} \sqrt{l r_1 r_2 r_3} \leg{X}{a^2 b^2}^{f + \varepsilon} \ll \sqrt{l} \frac{X^{f + \varepsilon}}{Y^{2f - 1}} l^{\varepsilon}.
\end{equation}
This is the error term appearing in Lemma \ref{lemma:MRresult}.

As for the main terms, we have by a direct application of Conjecture \ref{conj} that
\begin{multline}
M_R(\epsilon_1, \epsilon_2, \epsilon_3) = \sum_{\substack{(a,2l) = 1 \\ a > Y}} \mu(a)  \sum_{(b,2l)=1} \sum_{r_1, r_2, r_3 | ab} \frac{\mu(r_1) \mu(r_2) \mu(r_3) }{r_1^{\half+\alpha} r_2^{\half + \beta} r_3^{\half + \gamma}} \frac{1}{\sqrt{(lr_1 r_2 r_3)^*}} \frac{1}{2 \zeta_2(2)}
\\
\frac{1}{2\pi i} \int_{(\varepsilon)} A_{\epsilon_1(\alpha + s), \epsilon_2(\beta + s), \epsilon_3(\gamma + s)}(lr_1 r_2 r_3) \Gamma_{\alpha + s, \beta + s, \gamma + s}^{\delta_1, \delta_2, \delta_3}
\\
\widetilde{F}_{3s/2;a^2 b^2}(1 - \delta_1(\alpha+s) - \delta_2(\beta + s) - \delta_3(\gamma + s)) \frac{1}{(r_1 r_2 r_3)^{s}} \frac{G(s)}{s} g_{\alpha,\beta,\gamma}(s) ds.
\end{multline}
Here we use the notation $m^* = m_1$, where $m=m_1 m_2$ where $m_1$ is squarefree and $m_2$ is a square.
We can simplify the Mellin transform a bit by noting
\begin{equation}
 \widetilde{F}_{3s/2;a^2 b^2}(u) = \int_0^{\infty} (xa^2 b^2)^{\frac{3s}{2}} F(xa^2 b^2) x^{u} \frac{dx}{x} = (ab)^{-2u} \widetilde{F}(\tfrac{3s}{2} + u).
\end{equation}
As shorthand, let
$w = 1 - \delta_1(\alpha+s) - \delta_2(\beta+s) - \delta_3(\gamma+s)$.
Then
\begin{multline}
\label{eq:MRmainterms}
M_R(\epsilon_1, \epsilon_2, \epsilon_3) =  \frac{1}{2 \zeta_2(2)} \sum_{\substack{(a,2l) = 1 \\ a > Y}} \mu(a) 
\frac{1}{2\pi i} \int_{(\varepsilon)} \Gamma_{\alpha + s, \beta + s, \gamma + s}^{\delta_1, \delta_2, \delta_3}
\frac{G(s)}{s} g_{\alpha,\beta,\gamma}(s) 
\widetilde{F}(\tfrac{3s}{2} + w)
\\
 \sum_{\substack{(b,2l)=1}} \frac{1}{(ab)^{2w}} \sum_{r_1, r_2, r_3 | ab} \frac{\mu(r_1) \mu(r_2) \mu(r_3)}{r_1^{\half +\alpha + s}r_2^{\half +\beta + s}r_3^{\half +\gamma + s}} \frac{1}{\sqrt{(lr_1 r_2 r_3)^*}}
 A_{\epsilon_1(\alpha + s), \epsilon_2(\beta + s), \epsilon_3(\gamma + s)}(lr_1 r_2 r_3)   
  ds.
\end{multline}
These are the main terms appearing in Lemma \ref{lemma:MRresult}, completing the proof.
\end{proof}

\subsection{Proof of Lemma \ref{lemma:MRbound}}
\label{section:proofofMRbound}
 Here we prove Lemma \ref{lemma:MRbound}.  We shall need an auxiliary result that we first motivate.
By a symmetry argument, for the proof of \eqref{eq:twonegeps} it suffices to consider the case $\epsilon_1 = \epsilon_2 = -1$, $\epsilon_3 = 1$.  In this case, we have
\begin{multline}
M_R(-1, -1, 1) =  \frac{1}{2 \zeta_2(2)} \sum_{\substack{(a,2l) = 1 \\ a > Y}} \mu(a) 
\frac{1}{2\pi i} \int_{(\varepsilon)} \Gamma_{\alpha + s} \Gamma_{\beta + s}
\frac{G(s)}{s} g_{\alpha,\beta,\gamma}(s) 
\widetilde{F}(1-\alpha-\beta-\tfrac{s}{2})
\\
 \sum_{\substack{(b,2l)=1}} \frac{1}{(ab)^{2-2\alpha-2\beta-4s}} \sum_{r_1, r_2, r_3 | ab} \frac{\mu(r_1) \mu(r_2) \mu(r_3)}{r_1^{\half +\alpha + s}r_2^{\half +\beta + s}r_3^{\half +\gamma + s}} \frac{1}{\sqrt{(lr_1 r_2 r_3)^*}}
 A_{-\alpha - s, -\beta - s, \gamma + s}(lr_1 r_2 r_3)   
  ds.
\end{multline}
The important feature here is that the weight function satisfies $\widetilde{F}(1-\alpha-\beta-\tfrac{s}{2}) \ll X^{1-\tfrac{\sigma}{2} + \varepsilon}$, where $\sigma = \text{Re}(s)$ so that moving the contour to the {\em right} gives a saving.  
The hitch in carrying this out is getting an analytic continuation of the integrand for sufficiently large $\sigma$.  For $\sigma$ large the sum over $a$ will not converge absolutely; to get around this, we extend $a$ to all positive integers, and subtract the contribution from $a \leq Y$, writing $M_R(-1,-1,1) = M'(-1,-1,1) - M''(-1,-1,1)$, respectively.

First we simplify $M'(-1,-1,1)$; by grouping $ab$ into a new variable, say $c$, then the sum over $a$ becomes $\sum_{a | c} \mu(a)$, whence $c=1$ is the only term that does not vanish.  Then $b=1$, and $r_1 = r_2 =r_3 =1$, so that
\begin{multline}
\label{eq:M'def}
M'(-1, -1, 1) =  \frac{1}{2 \zeta_2(2)} 
\frac{1}{2\pi i} \int_{(\varepsilon)} \Gamma_{\alpha + s} \Gamma_{\beta + s}
\frac{G(s)}{s} g_{\alpha,\beta,\gamma}(s) 
\widetilde{F}(1-\alpha-\beta-\tfrac{s}{2})
\\
 \frac{1}{\sqrt{l_1}}
 A_{-\alpha - s, -\beta - s, \gamma + s}(l)   
  ds.
\end{multline}

As for $M''(-1,-1,1)$, we sum over $b$ to get
\begin{multline}
\label{eq:M''def}
M''(-1, -1, 1) =  \frac{1}{2 \zeta_2(2)} \sum_{\substack{(a,2l) = 1 \\ a \leq Y}} \mu(a) 
\frac{1}{2\pi i} \int_{(\varepsilon)} \Gamma_{\alpha + s} \Gamma_{\beta + s}
\frac{G(s)}{s} g_{\alpha,\beta,\gamma}(s) 
\widetilde{F}(1-\alpha-\beta-\tfrac{s}{2})
\\
 \frac{1}{a^{2-2\alpha-2\beta-4s}} \sum_{(r_1 r_2 r_3, 2l)=1} \frac{\mu(r_1) \mu(r_2) \mu(r_3)}{r_1^{\half +\alpha + s}r_2^{\half +\beta + s}r_3^{\half +\gamma + s}}\leg{(a,[r_1, r_2, r_3])}{[r_1, r_2, r_3]}^{2-2\alpha-2\beta-4s} 
\\
\zeta_{2l}(2-2\alpha-2\beta - 4s) \frac{1}{\sqrt{(lr_1 r_2 r_3)^*}}
 A_{-\alpha - s, -\beta - s, \gamma + s}(lr_1 r_2 r_3)   
  ds.
\end{multline}

We shall estimate both \eqref{eq:M'def} and \eqref{eq:M''def} in essentially the same way, namely by moving the contour of integration to $\sigma = \half - \varepsilon$ and bounding everything with absolute values.  We shall show that the sums over the $r_i$ converge absolutely.  In this way we reduce the problem to obtaining the analytic behavior of the arithmetical factor to this region.
This behavior is detailed with the following

\begin{mylemma}
 \label{lemma:A}
 Let $l = l_1 l_2$ and $A_{\alpha, \beta, \gamma}(l)$ be as in Conjecture \ref{conj}.  Then $A_{\alpha, \beta, \gamma}(l)$ has a meromorphic continuation to $\text{Re}(\alpha), \text{Re}(\beta), \text{Re}(\gamma) > -\half$.  Precisely, for any positive integer $M$ there exist integers $d_{a,b,c}$ (possibly negative or zero) such that
\begin{equation}
\label{eq:Aproduct}
 A_{\alpha,\beta,\gamma}(l) = C_{\alpha,\beta,\gamma}(l)\prod_{\substack{a + b + c \leq M-1 \\ a, b, c \geq 0}} \zeta(a+b+c + 2a\alpha + 2b\beta + 2c\gamma)^{d_{a,b,c}} ,
\end{equation}
where for any $\delta > 0$, $C_{\alpha,\beta,\gamma}(l)$ is given by an absolutely convergent Euler product in the region $\text{Re}(\alpha), \text{Re}(\beta), \text{Re}(\gamma) > \frac{1-M}{2M} + \delta$.  Furthermore, in this region $C_{\alpha, \beta, \gamma}(l)$ satisfies the bound
\begin{equation}
 C_{\alpha,\beta,\gamma}(l) \ll \sqrt{l_1} l^{\varepsilon}.
\end{equation}
\end{mylemma}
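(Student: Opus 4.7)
The plan is to establish Lemma \ref{lemma:A} by exploiting the multiplicativity of $A_{\alpha,\beta,\gamma}(l)$. The summand in \eqref{eq:Aoriginal} is multiplicative in $n$ for fixed $l$, so $A_{\alpha,\beta,\gamma}(l)$ factors as an Euler product over odd primes, with the local factor $A_p$ depending only on $v_p(l_1)$ and $v_p(l_2)$. First I would compute $A_p$ explicitly using the generating identity
\begin{equation*}
\sum_{m \geq 0} \sigma_{\alpha,\beta,\gamma}(p^m) T^m = \prod_{\lambda \in \{\alpha,\beta,\gamma\}} (1-p^{-\lambda} T)^{-1},
\end{equation*}
extracting the even-power or (shifted) odd-power subseries and combining with the correction factor $(1+p^{-1})^{-1}$ where applicable. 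The outcome is that $A_p$ is an explicit rational function in $p^{-\alpha}, p^{-\beta}, p^{-\gamma}$, and $p^{-1}$; for $p \nmid 2l$ the denominator is $(1-p^{-1-2\alpha})(1-p^{-1-2\beta})(1-p^{-1-2\gamma})$, globally producing the $\zeta_2(1+2\lambda)$ factors appearing in \eqref{eq:Aoriginal}.

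Next I would construct the integers $d_{a,b,c}$ inductively. At $p \nmid 2l$, the local factor $A_p$ admits a convergent expansion as a sum of monomials of the form $p^{-a-b-c-2a\alpha-2b\beta-2c\gamma}$ dictated by the rational-function shape and the $\alpha \leftrightarrow \beta \leftrightarrow \gamma$ symmetry. The inductive step at level $N = a+b+c$ is: given $d_{a',b',c'}$ at all lower levels, write
\begin{equation*}
A_p \prod_{a'+b'+c' \leq N-1} \bigl(1-p^{-a'-b'-c'-2a'\alpha-2b'\beta-2c'\gamma}\bigr)^{d_{a',b',c'}} = 1 + R_N(p),
\end{equation*}
and choose $d_{a,b,c}$ at level $N$ to kill the coefficient of each monomial of degree $N$ in $R_N(p)$. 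Since $A_p$ has a universal rational-function shape (depending on $p$ only through $p^{-1}, p^{-\alpha}, p^{-\beta}, p^{-\gamma}$), the $d_{a,b,c}$ so produced are independent of $p$. After $M-1$ rounds the residual local factor is $1 + O\!\bigl(p^{-M(1+2\sigma_{\min})}\bigr)$ with $\sigma_{\min} = \min(\text{Re}\,\alpha, \text{Re}\,\beta, \text{Re}\,\gamma)$, so the Euler product for $C_{\alpha,\beta,\gamma}(l)$ over $p \nmid 2l$ converges absolutely whenever $\sigma_{\min} > (1-M)/(2M) + \delta$.

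For the bound $C_{\alpha,\beta,\gamma}(l) \ll \sqrt{l_1}\, l^{\varepsilon}$, I would handle the primes dividing $l$ separately. For $p \mid l_1$, the odd-power subseries gives
\begin{equation*}
A_p = (1+p^{-1})^{-1} \, \frac{p^{-\alpha} + p^{-\beta} + p^{-\gamma} + p^{-1-\alpha-\beta-\gamma}}{(1-p^{-1-2\alpha})(1-p^{-1-2\beta})(1-p^{-1-2\gamma})},
\end{equation*}
which in the region $\sigma_{\min} > -\tfrac12 + \delta$ satisfies $|A_p| \ll p^{1/2+\varepsilon}$; since $l_1$ is squarefree, the product over such primes contributes at most $l_1^{1/2+\varepsilon}$. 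For $p \mid l_2$, an analogous explicit computation shows $A_p = O(1)$ uniformly in the same region, and the divisor bound gives an $l_2^{\varepsilon}$ contribution over these primes. The zeta factors pulled out in the previous step also contribute only $O(l^{\varepsilon})$ at primes dividing $l$, so combining everything yields the claimed bound on $C_{\alpha,\beta,\gamma}(l)$.

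The main obstacle is the combinatorial bookkeeping in the inductive peeling: one must verify that the mismatch at each successive order can genuinely be eliminated by an integer choice of $d_{a,b,c}$, and that the same choice is valid at every prime simultaneously. This uniformity in $p$ is what allows the construction to produce a global $C_{\alpha,\beta,\gamma}(l)$ with absolutely convergent Euler product, and it reduces the induction to a single polynomial identity in the formal variables $p^{-\alpha}, p^{-\beta}, p^{-\gamma}, p^{-1}$. The identity grows in complexity as $M$ increases, but the basic mechanism — matching dominant monomials with local $\zeta$-factors in a canonical way — remains the same at each level.
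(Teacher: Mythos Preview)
Your proposal is correct and follows essentially the same strategy as the paper: factor $A_{\alpha,\beta,\gamma}(l)$ as an Euler product, compute the local factor $A_p$ as an explicit rational function via the generating series for $\sigma_{\alpha,\beta,\gamma}$, inductively peel off local factors $(1-p^{-N-2a\alpha-2b\beta-2c\gamma})^{\pm 1}$ to globalize into zeta factors, and handle $p\mid l$ separately to obtain the $\sqrt{l_1}\,l^{\varepsilon}$ bound (your explicit formula for $A_p$ at $p\mid l_1$ matches the paper's exactly after the substitution $x=p^{-1/2-\alpha}$, etc.).

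One point deserves care in your write-up. You say $A_p$ depends on $p$ only through $p^{-1},p^{-\alpha},p^{-\beta},p^{-\gamma}$, and then match monomials of the form $p^{-a-b-c-2a\alpha-2b\beta-2c\gamma}$. But the correction factor $(1+p^{-1})^{-1}$ injects an \emph{independent} $p^{-1}$ variable, so $A_p$ genuinely contains monomials not of this shape; your peeling can only kill the ``leading'' ones (those with no extra $p^{-1}$). This is harmless for the conclusion, since the extra $p^{-1}$ makes the residual terms strictly smaller and hence absorbed into the absolutely convergent remainder; but it does mean your sentence ``choose $d_{a,b,c}$ to kill the coefficient of each monomial of degree $N$'' overstates what is actually achieved. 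The paper sidesteps this by first showing $\prod_{p\nmid 2l} A_p/A_p'$ converges absolutely for $\text{Re}(\alpha),\text{Re}(\beta),\text{Re}(\gamma)>-\tfrac12+\delta$, where $A_p'=\sum_{j\geq 0}\sigma_{\alpha,\beta,\gamma}(p^{2j})p^{-j}$ has \emph{no} stray $p^{-1}$ dependence, and then performs the inductive peeling on the cleaner $A_p'$. Either route works; the paper's preliminary reduction just makes the induction tidier.
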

\noindent {\bf Remark.}  If $a + b + c =1$ then $d_{a,b,c} = 1$, which justifies the representation on the second line of \eqref{eq:Aoriginal}.

Before proving Lemma \ref{lemma:A}, we conclude the proof of Lemma \ref{lemma:MRbound}.  
We require the estimate
\begin{equation}
 \widetilde{F}(1-\alpha-\beta- \tfrac{s}{2}) \ll X^{1 - \tfrac{\sigma}{2}},
\end{equation}
and that $G(s)$ is prescribed according to Remark \ref{remark:zero}.
Thus by moving the contour to $\sigma = \half - \delta$, we have
\begin{equation}
 M'(-1,-1,1) \ll l_1^{-\half} l_1^{\half+\varepsilon} l^{\varepsilon} X^{1-1/4 + \varepsilon},
\end{equation}
and
\begin{equation}
 M''(-1,-1,1) \ll X^{3/4 + \delta/2} \sum_{a \leq Y} a^{4\delta} \sum_{r_1, r_2, r_3} \frac{(lr_1 r_2 r_3)^{\varepsilon} |\mu(r_1) \mu(r_2) \mu(r_3)|}{(r_1 r_2 r_3)^{1- \delta}} \leg{(a,[r_1, r_2, r_3])}{[r_1, r_2, r_3]}^{4\delta}.
\end{equation}
Here $\varepsilon > 0$ is arbitrarily small.  Using the trivial inequality $(a, [r_1, r_2, r_3]) \leq a$ and noting that the sums over $r_1, r_2, r_3$ converge absolutely for any $\delta > 0$, (provided $\varepsilon < \delta/2$, say).  Thus by taking $\delta > 0$ arbitrarily small, we get
\begin{equation}
 M''(-1,-1,1) \ll Y X^{3/4} (l X)^{\varepsilon}.
\end{equation}
These bounds furnish the proof of \eqref{eq:twonegeps}.  The case of \eqref{eq:threenegeps} is similar but easier.  In this case,
\begin{multline}
M_R(-1, -1, -1) =  \frac{1}{2 \zeta_2(2)} \sum_{\substack{(a,2l) = 1 \\ a > Y}} \mu(a) 
\frac{1}{2\pi i} \int_{(\varepsilon)} \Gamma_{\alpha + s, \beta + s, \gamma + s}
\frac{G(s)}{s} g_{\alpha,\beta,\gamma}(s) 
\widetilde{F}(1- \alpha-\beta-\gamma-\tfrac{3s}{2})
\\
 \sum_{\substack{(b,2l)=1}} \frac{1}{(ab)^{2(1-\alpha-\beta-\gamma-3s)}} \sum_{r_1, r_2, r_3 | ab} \frac{\mu(r_1) \mu(r_2) \mu(r_3)}{r_1^{\half +\alpha + s}r_2^{\half +\beta + s}r_3^{\half +\gamma + s}} \frac{A_{-\alpha - s, -\beta - s, -\gamma - s}(lr_1 r_2 r_3)}{\sqrt{(lr_1 r_2 r_3)^*}}
  ds.
\end{multline}
In this case we simply move $\text{Re}(s) = \sigma$ to $\sigma = \frac16 - \delta$ for $\delta$ small.  The sums over $a$ and $b$ converge absolutely so an easy argument gives \eqref{eq:threenegeps}.

\begin{proof}[Proof of Lemma \ref{lemma:A}]
Recall
\begin{equation}
 A_{\alpha,\beta,\gamma}(l) = \sum_{(n,2)=1} \frac{\sigma_{\alpha,\beta,\gamma}(l_1 n^2)}{n} \prod_{p | nl} (1 + p^{-1})^{-1},
\end{equation}
which is initially defined for $\text{Re}(\alpha, \beta, \gamma) > 0$. 
The function $A$ has an Euler product representation, say $A=\prod_p A_p$.  Write $l_1 = \prod_p p^{l_p}$, say.  Then
\begin{equation}
 A_p = \sum_{j=0}^{\infty} \frac{\sigma_{\alpha,\beta,\gamma}(p^{l_p+2j})}{p^j} \prod_{q | lp^j} (1 + q^{-1})^{-1}.
\end{equation}
The $(1+q^{-1})^{-1}$ term is mildly annoying and is benign with respect to proving Lemma \ref{lemma:A}, so we first make a simple approximation to remove this factor.
For $p \nmid 2l$, $A_p$ has the form
\begin{equation}
A_p= 1 + (1+p^{-1})^{-1} \sum_{j=1}^{\infty} \frac{\sigma_{\alpha,\beta,\gamma}(p^{2j})}{p^j}
= 1 + (1+p^{-1})^{-1}(-1 + A_p'),
. 
\end{equation}
say, where
\begin{equation}
 A_p' = \sum_{j=0}^{\infty} \frac{\sigma_{\alpha,\beta,\gamma}(p^{2j})}{p^j}.
\end{equation}
Note that for $\text{Re}(\alpha), \text{Re}(\beta), \text{Re}(\gamma) > -\half$,
\begin{equation}
 A_p' = 1 + O(p^{-1}(p^{-2\alpha} + p^{-2\beta} + p^{-2\gamma} + p^{-\alpha-\beta} + p^{-\alpha-\gamma} + p^{-\beta-\gamma})).
\end{equation}
Thus
\begin{equation}
 \frac{A_p}{A_p'} = 1 + \frac{1}{p} \frac{(-1+A_p')}{A_p'} = 1 + O(p^{-2}(p^{-2\alpha} + p^{-2\beta} + p^{-2\gamma} + p^{-\alpha-\beta} + p^{-\alpha-\gamma} + p^{-\beta-\gamma})),
\end{equation}
so that
 $\prod_{p \nmid 2l} \frac{A_p}{A_p'}$
is absolutely convergent in any domain of the form $\text{Re}(\alpha)$, $\text{Re}(\beta)$, $\text{Re}(\gamma) \geq -\half + \delta$ with $\delta > 0$ fixed.  Thus $\prod_{p \nmid 2l} A_p$ has the meromorphy properties required by Lemma \ref{lemma:A} if and only if $\prod_{p \nmid 2l} A_p'$ does.  (We shall treat the primes dividing $l$ later).

Note that
\begin{equation}
 \sigma_{\alpha, \beta, \gamma}(p^k) = \sum_{a + b + c=k} p^{-a\alpha - b \beta - c \gamma}.
\end{equation}
Thus for $p \nmid 2l$,
\begin{equation}
A_p'= \sum_{j=0}^{\infty} \frac{\sigma_{\alpha,\beta,\gamma}(p^{2j})}{p^j} = \sum_{a, b, c \geq 0} \half\frac{1 + (-1)^{a+b+c}}{p^{\frac{a+b+c}{2} + a\alpha + b\beta + c\gamma}}.
\end{equation}
Let $x = p^{-\half - \alpha}$, $y=p^{-\half-\beta}$, and $z = p^{-\half-\gamma}$.  Then with this notation,
\begin{equation}
A_p'= \sum_{j=0}^{\infty} \frac{\sigma_{\alpha,\beta,\gamma}(p^{2j})}{p^j} = \sum_{a, b, c \geq 0} \half (1 + (-1)^{a+b+c}) x^a y^b z^c,
\end{equation} 
which simplifies as
\begin{equation}
A_p'= \half (\frac{1}{(1-x)(1-y)(1-z)} + \frac{1}{(1+x)(1+y)(1+z)}) = \frac{1 + xy + xz + yz}{(1-x^2)(1-y^2)(1-z^2)}.
\end{equation}
Let $D(x,y,z) = (1-x^2)(1-y^2)(1-z^2)(1-xy)(1-xz)(1-yz)$, whence 
\begin{equation}
A_p'= \frac{N(x,y,z)}{D(x,y,z)} 
\end{equation}
where
\begin{equation}
 N(x,y,z) = 1-x^2 y^2 -x^2 z^2 - y^2 z^2 - x^2 yz - xy^2 z  - xyz^2  + (\text{degree 6 and higher}).
\end{equation}
Furthermore, $N(x,y,z) \in \mz[x,y,z]$.
Then we can write
\begin{equation}
 N(x,y,z) = (1-x^2 y^2)(1-x^2 z^2)(1-y^2 z^2)(1-x^2 yz)(1-xy^2 z)(1- x y z^2)N_2(x,y,z)
\end{equation}
where $N_2(x,y,z) = 1 + (\text{degree 6 and higher})$.  It is clear that this process can be continued indefinitely to obtain $A_p'$ as a ratio of products of polynomials of the form $(1-x^i y^j z^k)$ with $i +j + k$ less than any given bound, say $B$, times a polynomial of the form $1$ plus a symmetric polynomial of degree $\geq B+2$.  In terms of $\prod_{p \nmid 2l} A_p'$, this gives a factorization in terms of ratios of the zeta function, of the form
\begin{multline}
\label{eq:Ap'product}
\prod_{p \nmid 2l} A_p' = \frac{\zeta_{2l}(1 + 2\alpha )\zeta_{2l}(1 + 2\beta )\zeta_{2l}(1 + 2\gamma)\zeta_{2l}}{\zeta_{2l}(2 + 2\alpha + 2\beta)\zeta_{2l}(2 + 2\alpha + 2\gamma) \zeta_{2l}(2 + 2\beta + 2\gamma)}
\\
\frac{\zeta_{2l}(1 + \alpha + \beta )\zeta_{2l}(1 + \alpha + \gamma )\zeta_{2l}(1 + \beta+\gamma)}{\zeta_{2l}(2 + 2\alpha + \beta+\gamma)\zeta_{2l}(2 + \alpha + 2\beta+\gamma)\zeta_{2l}(2 + \alpha + \beta+2\gamma)} \dots,
\end{multline}
where the dots indicate the contribution from the terms corresponding to the degree $6$ factors of $N_2(x,y,z)$.  The degree $6$ factors give an absolutely convergent Euler product for $\text{Re}(\alpha), \text{Re}(\beta), \text{Re}(\gamma) > -\frac{1}{3}$.  By continuing this procedure, we continue extracting ratios of zeta functions, each of the form $\zeta_{2l}(M + a\alpha + b\beta + c\gamma)$ with $a+b+c=2M$; by taking such terms corresponding to degree $\leq M-1$ polynomials of $x,y,z$, we get absolute convergence for the ``remainder'' term provided $\text{Re}(\alpha), \text{Re}(\beta), \text{Re}(\gamma) > \frac{1-M}{2M}$ which as $M \rightarrow \infty$ leads to $\text{Re}(\alpha), \text{Re}(\beta), \text{Re}(\gamma) > -\frac{1}{2}$, as desired.

As for $p | l$, first notice that extending the product in \eqref{eq:Ap'product} to $p | l$ gives the ratio of zeta functions as in \eqref{eq:Aproduct}, and that this finite product is holomorphic and bounded by $l^{\varepsilon}$ in the region $\text{Re}(\alpha), \text{Re}(\beta), \text{Re}(\gamma) \geq -\half + \delta$.

To complete the proof of Lemma \ref{lemma:A}, we need to examine the behavior of $\prod_{p | l} A_p$.  For $p |l$, 
\begin{equation}
 A_p = (1+p^{-1})^{-1} \sum_{j=0}^{\infty} \frac{\sigma_{\alpha,\beta,\gamma}(p^{2j+l_p})}{p^j}.
\end{equation}
Here $l_p = 0$ or $1$.  The work above easily handles the case $l_p =0$, so suppose $l_p = 1$.  Then
\begin{equation}
\label{eq:sigmasum}
 \sum_{j=0}^{\infty} \frac{\sigma_{\alpha,\beta,\gamma}(p^{2j+1})}{p^j} = \sum_{a + b + c \equiv 1 \shortmod{2}} p^{-a\alpha - b \beta - c \gamma - \frac{a+b+c-1}{2}},
\end{equation}
which with the notation $x,y,z$ as before, equals
\begin{equation}
 \sqrt{p} \sum_{a,b,c} \half (1- (-1)^{a+b+c}) x^a y^b z^c = \sqrt{p} \half (\frac{1}{(1-x)(1-y)(1-z)} - \frac{1}{(1+x)(1+y)(1+z)}).
\end{equation}
Simplifying this, we get that \eqref{eq:sigmasum} equals
\begin{equation}
\sqrt{p} \frac{x+y+z + xyz}{(1-x^2)(1-y^2)(1-z^2)}.
\end{equation}
For $\text{Re}(\alpha, \beta, \gamma) \geq -\half + \delta> -\half$ this is analytic and satisfies the bound
\begin{equation}
 \ll |p^{-\alpha}| + |p^{-\beta}| + |p^{-\gamma}| \ll p^{1/2},
\end{equation}
with an implied constant depending on $\delta$.  Thus $\prod_{p|l} A_p$ is holomorphic in the desired region, and satisfies
\begin{equation}
 \prod_{p | l} A_p \ll l_1^{\half} l^{\varepsilon},
\end{equation}
as desired.  This completes the proof of Lemma \ref{lemma:A}.
\end{proof}

\section{Development of $M_N$}
\label{section:MNwholesection}

\subsection{Poisson summation}
\label{section:MNcalculation}
We shall use Poisson summation to analyze $M_N$.  We perform these calculations here.  Recall that $M_N$ is given by \eqref{eq:M1expression} but with the additional truncation condition $a \leq Y$ imposed.  We write
\begin{equation}
 M_N = \sum_{\substack{(a,2l) = 1 \\ a \leq Y}} \mu(a)  \sum_{(n,2a)=1} \leg{8}{nl} \frac{\sigma_{\alpha,\beta,\gamma}(n)}{n^{\thalf}} \sum_{(d,2)=1} \leg{d}{nl} F(d a^2) V_{\alpha, \beta,\gamma} \left(\frac{n}{(a^2 d)^{3/2}}\right).
\end{equation}
By Poisson summation, Lemma \ref{lemma:poisson}, this is
\begin{equation}
\label{eq:MNafterPoisson}
 M_N = \frac{1}{2} \sum_{\substack{(a,2l) = 1 \\ a \leq Y}} \mu(a)  \sum_{(n,2a)=1} \leg{16}{nl} \frac{\sigma_{\alpha,\beta,\gamma}(n)}{n^{\thalf}}  \sum_{k \in \mz} (-1)^k \frac{G_k(nl)}{nl} H(k/2nl),
\end{equation}
where
\begin{equation}
 H(y) = \int_{0}^{\infty} (\cos + \sin)(2 \pi xy) F(x a^2) V_{\alpha, \beta,\gamma} \left(\frac{n}{(a^2 x)^{3/2}}\right) dx.
\end{equation}
We find an alternate Mellin-type expression for $H$ with the following.
\begin{mylemma}
\label{lemma:H}
 Suppose that $c_s, c_u$ are real numbers such that $ \frac{3c_s}{2} - c_u > 0$ and $c_s > 0$.  Then for $y \neq 0$, we have
\begin{multline}
\label{eq:H}
 H(y) = \frac{1}{a^2} \leg{1}{2 \pi i}^2 \int_{(c_s)} \int_{(c_u)} \widetilde{F}(1+u) \frac{G(s)}{s} g_{\alpha, \beta, \gamma}(s) \leg{a^2}{2 \pi |y|}^{\frac{3s}{2} - u} n^{-s} 
\\
\Gamma(\frac{3s}{2} - u) (\cos + \sgn(y) \sin)(\frac{\pi}{2}(\frac{3s}{2} - u))  ds du.
\end{multline}
Furthermore, for $c_s > 0$, we have
\begin{equation}
 H(0) = \frac{1}{a^2} \frac{1}{2 \pi i} \int_{(c_s)} \frac{G(s)}{s} g_{\alpha, \beta, \gamma}(s) n^{-s} \widetilde{F}(1+\frac{3s}{2}) ds.
\end{equation}
\end{mylemma}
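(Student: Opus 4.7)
The plan is to insert the Mellin--Barnes representation \eqref{eq:Vdef} of $V_{\alpha,\beta,\gamma}$, and (for $y \neq 0$) also the Mellin inversion of $F$, into the definition of $H(y)$, then interchange the order of integration and evaluate the resulting inner $x$-integral as a classical Mellin transform. For $H(0)$ the trigonometric factor equals $1$, so substituting \eqref{eq:Vdef}, swapping the $s$- and $x$-integrations (justified by the rapid decay of $G$ in vertical strips together with the compact support of $F$), and evaluating $\int_0^{\infty} F(xa^2) x^{3s/2} \, dx = (a^2)^{-3s/2 - 1} \widetilde{F}(1 + \tfrac{3s}{2})$ via the substitution $z = xa^2$ directly yields the claimed identity. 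The hypothesis $c_s > 0$ is exactly what is needed to keep the contour to the right of the pole of $G(s)/s$ at $s = 0$.

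For $y \neq 0$ I would additionally use Mellin inversion to write $F(xa^2) = \frac{1}{2\pi i}\int_{(c_u)} \widetilde{F}(1+u) (xa^2)^{-(1+u)} du$, valid for any real $c_u$ since $F$ is smooth with compact support in $(0, \infty)$. Inserting this together with \eqref{eq:Vdef} into the definition of $H(y)$ and collecting powers of $x$, the integrand becomes $x^{\frac{3s}{2} - u - 1}$ multiplied by factors independent of $x$, and the residual $x$-integral is the classical Mellin transform
\begin{equation*}
\int_0^{\infty} (\cos + \sin)(2\pi x y) \, x^{w-1} \, dx = (2\pi |y|)^{-w} \Gamma(w) \bigl(\cos(\tfrac{\pi w}{2}) + \sgn(y)\sin(\tfrac{\pi w}{2})\bigr)
\end{equation*}
evaluated at $w = \tfrac{3s}{2} - u$, which is absolutely convergent in the strip $0 < \text{Re}(w) < 1$. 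Choosing the contours initially in $0 < \tfrac{3c_s}{2} - c_u < 1$, the triple integral converges absolutely (using rapid decay of $G$ and of $\widetilde{F}(1+u)$, combined with absolute convergence of the Mellin transform above), so Fubini legitimizes all of the interchanges and produces the right-hand side of \eqref{eq:H}. The combinatorics of the powers $(a^2)^{3s/2}$ and $(a^2)^{-(1+u)}$ combined with $(2\pi|y|)^{-(3s/2 - u)}$ account precisely for the overall factor $\frac{1}{a^2}\bigl(\frac{a^2}{2\pi|y|}\bigr)^{3s/2 - u}$.

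To reach the full claimed range $\tfrac{3c_s}{2} - c_u > 0$, I would fix $c_s > 0$ and shift the $c_u$-contour leftward. Viewed as a function of $u$, the only singularities of the integrand are the poles of $\Gamma(\tfrac{3s}{2} - u)$ at $u = \tfrac{3s}{2} + k$, $k = 0, 1, 2, \dots$, all of which lie at $\text{Re}(u) \geq \tfrac{3c_s}{2}$ and therefore to the \emph{right} of the initial contour. Shifting $c_u$ leftward crosses no poles, so the identity persists by Cauchy's theorem; the resulting double integral converges absolutely on any such contour because $\Gamma(\tfrac{3s}{2} - u)$ grows only polynomially in vertical strips while $G(s)$ and $\widetilde{F}(1+u)$ both decay rapidly. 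There is no genuine obstacle to the lemma; the only real care required is this bookkeeping of contour shifts and the justification of Fubini in the initial strip.
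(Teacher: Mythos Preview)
Your approach mirrors the paper's exactly: insert the Mellin representations of $V$ and $F$, swap with the $x$-integral, evaluate the Mellin transform of $\cos+\sin$, work initially in the strip $0 < \tfrac{3c_s}{2} - c_u < 1$, and then extend by shifting $c_u$. The one genuine flaw is your justification of the interchange of integrals. The Mellin integral
\[
\int_0^\infty (\cos+\sin)(2\pi xy)\, x^{w-1}\, dx
\]
is \emph{not} absolutely convergent anywhere: for $0 < \text{Re}(w) < 1$ it converges only conditionally, since $|\cos(2\pi xy)|\, x^{\text{Re}(w)-1}$ is not integrable at infinity. Consequently the triple integral in $(x,s,u)$ is not absolutely convergent and Fubini does not apply as stated. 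The paper flags precisely this step as delicate and defers to the argument in \cite{Y}. A standard repair is to integrate by parts once in $x$ (antidifferentiating the oscillatory factor), which gains a factor of $x^{-1}$ and renders the $x$-integral absolutely convergent in a suitable strip; alternatively, truncate to $x\in[0,T]$, swap by absolute convergence on the compact range, and then pass to the limit $T\to\infty$.

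One minor inaccuracy in your final paragraph: $\Gamma(\tfrac{3s}{2}-u)$ does not grow polynomially in vertical strips---it decays exponentially---but the factor $(\cos+\sgn(y)\sin)\bigl(\tfrac{\pi}{2}(\tfrac{3s}{2}-u)\bigr)$ grows exponentially at the matching rate, so their product is of polynomial size. Your conclusion that the shifted double integral converges absolutely, and that no poles are crossed when moving $c_u$ leftward, is therefore correct.
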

\begin{proof}
 First suppose $y \neq 0$.  We begin by writing
\begin{equation}
 F(xa^2) = \frac{1}{2 \pi i} \int_{(c_u)} \widetilde{F}(1+u) (xa^2)^{-1-u} du,
\end{equation}
valid for any $c_u \in \mr$, and the Mellin formula for $V_{\alpha, \beta, \gamma}$, i.e., \eqref{eq:Vdef}.  Then we change variables $x \rightarrow x/(2 \pi |y|)$ to obtain
\begin{multline}
 H(y) = \frac{1}{a^2} \int_{0}^{\infty} (\cos + \sgn(y) \sin)(x) \leg{1}{2 \pi i}^2 \int_{(c_s)} \int_{(c_u)} \widetilde{F}(1+u) \frac{G(s)}{s} g_{\alpha, \beta, \gamma}(s)   
\\
n^{-s} \leg{a^2}{2 \pi |y|}^{\frac{3s}{2} - u} x^{\frac{3s}{2} - u} ds du \frac{dx}{x}.
\end{multline}
Next we reverse the order of integrals in order to perform the $x$-integral first.  We temporarily make the assumption $0 < \frac{3 c_s}{2} - c_u < 1$ for convergence.  We argue as in my previous paper to justify the reversal of integrals.  Then using
\begin{equation}
 \int_0^{\infty} CS(x) x^{w} \frac{dx}{x} = \Gamma(w) CS(\frac{\pi w}{2}),
\end{equation}
where $CS$ stands for either $\cos$ or $\sin$, and $0 < \text{Re}(w) < 1$, we obtain \eqref{eq:H}.

The case $y=0$ is much easier.  We simply change variables $x \rightarrow x/a^2$, apply the definition \eqref{eq:Vdef}, reverse the orders of integration, and evaluate the $x$-integral.
\end{proof}
Let $M_N(k=0)$ denote the contribution to $M_N$ from $k=0$.  Applying Lemma \ref{lemma:H} to \eqref{eq:MNafterPoisson}, we have
\begin{equation}
 M_N(k=0) = \frac{1}{2} \sum_{\substack{(a,2l) = 1 \\ a \leq Y}} \frac{\mu(a)}{a^2}   
 \frac{1}{2 \pi i} \int_{(c_s)} \frac{G(s)}{s} g_{\alpha, \beta, \gamma}(s) 
\widetilde{F}(1+\frac{3s}{2})
\sum_{(n,2a)=1} \frac{\sigma_{\alpha,\beta,\gamma}(n)}{n^{\thalf+s}}  \frac{G_0(nl)}{nl}   ds.
\end{equation}
By Lemma \ref{lemma:Gk}, we have $G_0(nl) = \phi(nl)$ if $nl$ is a square, and is zero otherwise.  This means we can write $n \rightarrow l_1 n^2$, and then we obtain
\begin{equation}
\label{eq:MNk=0term}
 M_N(k=0) = \frac{1}{2} \sum_{\substack{(a,2l) = 1 \\ a \leq Y}} \frac{\mu(a)}{a^2} \frac{1}{2 \pi i} \int_{(\varepsilon)} \frac{G(s)}{s} g_{\alpha, \beta, \gamma}(s) \widetilde{F}(1 + \frac{3s}{2}) D_N(k=0,s) ds,
\end{equation}
where
\begin{equation}
 D_N(k=0;s) = \sum_{(n,2a) = 1} \frac{\sigma_{\alpha, \beta, \gamma}(l_1 n^2)}{(l_1 n^2)^{\frac12 + s}} \frac{\phi(ln)}{ln}.
\end{equation}

Now consider the terms with $k \neq 0$, which we denote $M_N(k \neq 0)$.  Suppose $\epsilon \in \{ \pm  \}$ is the sign of $k$.  Then we obtain $M_N(k \neq 0) = M_N^{+}(k \neq 0) + M_N^{-}(k \neq 0)$, where by applying Lemma \ref{lemma:H} to \eqref{eq:MNafterPoisson} and reversing the orders of summation and integration (supposing $c_u = c_s \geq 3$, say), we have
\begin{multline}
 M_N^{\epsilon}(k \neq 0) = \frac{1}{2} \sum_{\substack{(a,2l) = 1 \\ a \leq Y}} \frac{\mu(a)}{a^2}  \leg{1}{2 \pi i}^2 \int_{(c_s)} \int_{(c_u)} \widetilde{F}(1+u) \frac{G(s)}{s} g_{\alpha, \beta, \gamma}(s) 
\leg{l a^2}{ \pi}^{\frac{3s}{2} - u}
\\
\Gamma(\frac{3s}{2} - u) (\cos + \epsilon \sin)(\frac{\pi}{2}(\frac{3s}{2} - u))
\sum_{(n,2a)=1} \sum_{k \geq 1} 
\frac{\sigma_{\alpha,\beta,\gamma}(n)}{n^{\thalf+u-\frac{s}{2}} |k|^{\frac{3s}{2} -u}}    \frac{(-1)^k G_{\epsilon k}(nl)}{nl}
  ds du.
\end{multline}
Next we argue with inclusion-exclusion to treat the term $(-1)^k$.  
Suppose that $f(k)$ is some function such that $f(4k) = 4^{-z} f(k)$; in our case, we have $f(k) = G_{\epsilon k}(nl)/|k|^{\frac{3s}{2} -u}$, and $z= \frac{3s}{2} - u$.  Note that $nl$ is odd and hence $G_{4 \epsilon k}(nl) = G_{\epsilon k}(nl)$.  Then we write $k = k_1 k_2^2$ with $k_1$ squarefree and separate $k_1$ odd and even.  Then we have
\begin{align}
 \sum_{k=1}^{\infty} (-1)^k f(k) = \sumstar_{k_1 \text{ even}} \sum_{k_2} f(k_1 k_2^2) + \sumstar_{k_1 \text { odd}} \sum_{k_2=1}^{\infty} (-1)^{k_2} f(k_1 k_2^2)
\\
= 
\sumstar_{k_1 \text{ even}} \sum_{k_2=1}^{\infty} f(k_1 k_2^2) + \sumstar_{k_1 \text { odd}} (2\sum_{k_2=1}^{\infty}  f(4k_1 k_2^2) - \sum_{k_2=1}^{\infty} f(k_1 k_2^2) ).
\end{align}
Then using $f(4k) = 4^{-z} f(k)$, this becomes
\begin{equation}
\label{eq:-1ksum}
 \sum_{k=1}^{\infty} (-1)^k f(k) =  (2^{1-2z} - 1) \sumstar_{k_1 \text{ odd}}  \sum_{k_2=1}^{\infty} f(k_1 k_2^2) + \sumstar_{k_1 \text{ even}} \sum_{k_2=1}^{\infty} f(k_1 k_2^2)
\end{equation}
Applying the decomposition \eqref{eq:-1ksum} to $M_N^{\epsilon}(k \neq 0)$, we obtain
\begin{equation}
\label{eq:MNwithM1}
 M_N^{\epsilon}(k \neq 0) = \half \sum_{\substack{(a,2l) = 1 \\ a \leq Y}} \frac{\mu(a)}{a^2} (\sumstar_{k_1 \text{ odd}} \mathcal{M}_1(s,u, k_1, l) + \sumstar_{k_1 \text{ even}} \mathcal{M}_2(s,u, k_1, l) ),
\end{equation}
where
\begin{multline}
\label{eq:mathcalM1}
 \mathcal{M}_1(s,u, k_1, l) = \leg{1}{2 \pi i}^2 \int_{(c_s)} \int_{(c_u)} \widetilde{F}(1+u) \frac{G(s)}{s} g_{\alpha, \beta, \gamma}(s) 
\leg{l a^2}{ \pi k_1 }^{\frac{3s}{2} - u} (2^{1-3s+2u}-1)
\\
\Gamma(\frac{3s}{2} - u) (\cos + \epsilon \sin)(\frac{\pi}{2}(\frac{3s}{2} - u)) J_{\epsilon k_1}(3s-2u, \thalf + u - \tfrac{s}{2})
  ds du,
\end{multline}
and where
\begin{equation}
\label{eq:Jdef}
 J_{\epsilon k_1}(v, w) =  \sum_{(n,2a)=1} \sum_{k_2 \geq 1} 
\frac{\sigma_{\alpha,\beta,\gamma}(n)}{n^{w}  k_2^{v}}    \frac{G_{\epsilon k_1 k_2^2}(nl)}{nl}.
\end{equation}
We suppress the dependence on $l$, $a$, $\alpha$, $\beta$, and $\gamma$ in our notation for $J$.  The formula for $\mathcal{M}_2(s,u,k_1,l)$ is identical to \eqref{eq:mathcalM1} except the factor $(2^{1-3s+2u} - 1)$ is omitted.

\subsection{Continuation of $J$}
Here we develop the analytic properties of $J_{\epsilon k_1}(v, w)$.
\begin{mylemma}
\label{lemma:Jprop}
 Suppose that $\epsilon = \pm 1$, $(l, 2a) =1$, $k_1$ is squarefree, and $J_{\epsilon k_1}(v, w)$ is given initially by \eqref{eq:Jdef} for say $\text{Re}(v) > 2$ and $\text{Re}(w) > 2$.  Then for any $\delta > 0$, $J_{\epsilon k_1}(v, w)$ has a meromorphic continuation to $\text{Re}(w) > \delta$ and $\text{Re}(v) \geq 2$, provided $\alpha, \beta, \gamma$ are small enough compared to $\delta$.  Furthermore, in this region we have
\begin{equation}
 J_{\epsilon k_1}(v, w) = L_{2al}(\thalf + w + \alpha, \chi_{\epsilon k_1}) L_{2al}(\thalf + w + \beta, \chi_{\epsilon k_1})L_{2al}(\thalf + w + \gamma, \chi_{\epsilon k_1}) I_{\epsilon k_1}(v, w),
\end{equation}
where $\chi_{\epsilon k_1}(p) = \leg{\epsilon k_1}{p}$, the subscript on $L_{2al}$ means the Euler factors dividing $2 al $ are removed, and where $I_{\epsilon k_1}(v, w)$ is analytic in this domain and satisfies the bound
\begin{equation}
 I_{\epsilon k_1}(v, w) \ll_{\delta, \varepsilon} l^{-\half + \varepsilon} .
\end{equation}

\end{mylemma}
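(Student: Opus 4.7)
The approach is to factor $J_{\epsilon k_1}(v,w)$ as an Euler product, extract three Dirichlet $L$-factors, and control the remaining product prime-by-prime. Since $\sigma_{\alpha,\beta,\gamma}$ is multiplicative and, by Lemma \ref{lemma:Gk}, the map $m \mapsto G_k(m)$ is multiplicative in its odd argument with fixed $k$, one writes
\begin{equation*}
J_{\epsilon k_1}(v,w) = \frac{1}{l}\prod_p \widetilde{J}_p,\qquad
\widetilde{J}_p := \sum_{\substack{n_p,\,k_{2,p} \geq 0\\ n_p=0\,\text{if}\,p\mid 2a}} \frac{\sigma_{\alpha,\beta,\gamma}(p^{n_p})\,G_{\epsilon k_1 k_2^2}(p^{n_p+l_p})}{p^{n_p(w+1)}\,p^{v k_{2,p}}},
\end{equation*}
so that each $\widetilde{J}_p$ is determined by the case analysis in Lemma \ref{lemma:Gk} applied with $\alpha_p := v_p(\epsilon k_1) + 2k_{2,p}$ and $\beta_p := n_p + l_p$.

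At a generic prime $p \nmid 2alk_1$, the case $\beta_p = \alpha_p+1$ yields the dominant Gauss contribution $\chi_{\epsilon k_1}(p)\,p^{2k_{2,p}+1/2}$; at $k_{2,p}=0,\ n_p=1$ this reproduces exactly the coefficient $\chi_{\epsilon k_1}(p)\bigl(p^{-\thalf-w-\alpha}+p^{-\thalf-w-\beta}+p^{-\thalf-w-\gamma}\bigr)$ of the Dirichlet series for $\prod_{\lambda} L(\thalf+w+\lambda,\chi_{\epsilon k_1})$, while the remaining terms (from even $\beta_p$ and from higher $k_{2,p}$) give corrections of size $O(p^{-1-2\text{Re}(w)})+O(p^{-v})$. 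Define
\begin{equation*}
I_{\epsilon k_1}(v,w) := J_{\epsilon k_1}(v,w) \prod_{\lambda \in \{\alpha,\beta,\gamma\}} L_{2al}(\thalf+w+\lambda,\chi_{\epsilon k_1})^{-1}.
\end{equation*}
One then verifies that $I_p = 1 + O(p^{-1-2\delta})$ at such $p$, uniformly in the shifts (taken small compared to $\delta$), so that $\prod_{p\nmid 2alk_1} I_p$ converges absolutely in $\text{Re}(w)>\delta$, $\text{Re}(v)\geq 2$.

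The exceptional primes $p\mid 2alk_1$ are handled individually, with $p\mid l$ being the decisive ones for the $l^{-1/2+\varepsilon}$ bound. At $p\mid 2a$, coprimality forces $n_p=l_p=0$, so $\widetilde{J}_p=(1-p^{-v})^{-1}$; at $p\mid k_1$ but $p\nmid 2al$ the $L$-factor is trivial and a direct computation shows $I_p=\widetilde{J}_p$ is holomorphic and bounded in the region. At $p\mid l$, the prefactor $1/l$ supplies $p^{-l_p}$, while Lemma \ref{lemma:Gk} gives the classical Gauss-sum bound $|G_{\epsilon k_1 k_2^2}(p^{n_p+l_p})|\ll p^{(n_p+l_p)/2}$; summing against $\sigma_{\alpha,\beta,\gamma}$ and over $k_{2,p}$ yields $\widetilde{J}_p \ll p^{l_p/2+\varepsilon}$, hence $I_p \ll p^{-l_p/2+\varepsilon}$, and taking the product over $p\mid l$ gives the claimed $l^{-1/2+\varepsilon}$ bound.

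The main obstacle is the combinatorial bookkeeping in these local computations, especially at primes $p\mid l_2$ (where $l_p\geq 2$ and several branches of Lemma \ref{lemma:Gk} contribute simultaneously) and in verifying that the matching between $\widetilde{J}_p$ and the standard triple-$L$ Euler factor at generic $p$ is tight enough for absolute convergence uniformly in $(v,w)$ and the small shifts. Once these Euler-factor calculations are complete, the meromorphic continuation of $J_{\epsilon k_1}(v,w)$ in the stated region is inherited from the three Dirichlet $L$-functions (with possible poles only when $\epsilon k_1=1$, where $\chi_{\epsilon k_1}$ is principal), and the stated bound on $I_{\epsilon k_1}(v,w)$ follows from multiplying the local bounds.
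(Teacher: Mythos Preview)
Your overall strategy coincides with the paper's: factor $J_{\epsilon k_1}(v,w)$ as an Euler product, extract the three Dirichlet $L$-factors at generic primes $p\nmid 2alk_1$, and bound the remaining local factors individually, with the primes $p\mid l$ being responsible for the $l^{-1/2+\varepsilon}$ saving. The case division and the target bound $I_p \ll p^{-l_p/2+\varepsilon}$ at $p\mid l$ are exactly those in the paper.

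There is, however, a concrete error in your handling of $p\mid l$. You invoke a ``classical Gauss-sum bound'' $|G_{\epsilon k_1 k_2^2}(p^{n_p+l_p})|\ll p^{(n_p+l_p)/2}$, but Lemma~\ref{lemma:Gk} does \emph{not} give this. When $\beta:=n_p+l_p$ is even and $\beta\le \alpha:=v_p(\epsilon k_1)+2k_{2,p}$, one has $G_k(p^{\beta})=\phi(p^{\beta})\sim p^{\beta}$, which is much larger than $p^{\beta/2}$; already $G_0(p^2)=p^2-p$ violates your bound. If you plug your claimed pointwise inequality into the double sum over $(n_p,k_{2,p})$ you do formally arrive at $\widetilde{J}_p\ll p^{l_p/2+\varepsilon}$, but the input inequality is false, so the step is not justified.

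The repair, which is what the paper does, is to separate the two branches of Lemma~\ref{lemma:Gk} explicitly and use the built-in constraint between $k_{2,p}$ and $n_p$. The large value $\phi(p^{n_p+l_p})$ only occurs when $2k_{2,p}\ge n_p+l_p$; summing $p^{-vk_{2,p}}$ over this range with $\text{Re}(v)\ge 2$ yields a factor $\ll p^{-(n_p+l_p)}$ that exactly cancels the $p^{n_p+l_p}$ from the Gauss sum, so this branch contributes $O(1)$ to $\widetilde{J}_p$. The remaining branch $\beta=\alpha+1$ genuinely satisfies $|G|\ll p^{\alpha+1/2}$ and, for the minimal admissible $(n_p,k_{2,p})$, produces the dominant contribution of size $p^{l_p/2}$. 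With this corrected local analysis your argument goes through and agrees with the paper's.
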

\begin{proof}
We begin by observing that $J$ has an Euler product as we now explain. Lemma \ref{lemma:Gk} immediately shows the multiplicativity in terms of $n$.  It follows from the definition of $G_k(n)$ (and a change of variables) that if $k$ is multiplied by a square coprime to $n$ then the value of $G_k(n)$ is unchanged.

According to the Euler product, we write $J_{\epsilon k_1}(v, w) = \prod_{p} J_{\epsilon k_1 ;p}(v, w)$.  In the easiest case $p | 2a$, we have
\begin{equation}
 J_{\epsilon k_1 ;p}(v, w) = \sum_{j=0}^{\infty} \frac{1}{p^{jv}} = (1-p^{-v})^{-1}.
\end{equation}
If $p \nmid 2a$, and if say $p^{l_p} || l$, then 
\begin{equation}
 J_{\epsilon k_1 ;p}(v, w) = \sum_{n=0}^{\infty} \sum_{k_2 = 0}^{\infty} \frac{\sigma_{\alpha,\beta,\gamma}(p^n)}{p^{nw + k_2 v}}    \frac{G_{\epsilon k_1 p^{2k_2}}(p^{n+l_p})}{p^{n+l_p}}.
\end{equation}
Suppose $p \nmid k_1$.  
In view of Lemma \ref{lemma:Gk}, there are two classes of terms where the Gauss sum does not vanish, one of which is for $n+l_p$ even and $n+l_p \leq 2k_2$, and the other is for $n +l_p= 2k_2 + 1$. Thus
\begin{equation}
 J_{\epsilon k_1 ;p}(v, w) = \sum_{n \equiv l_p \shortmod{2}} \sum_{2k_2 \geq n + l_p}  \frac{\sigma_{\alpha,\beta,\gamma}(p^{n})}{p^{nw + k_2 v}}    \frac{\phi(p^{n+l_p})}{p^{n+l_p}} +  \frac{(\frac{\epsilon k_1}{p})}{\sqrt{p}} \sum_{2k_2 \geq l_p -1}^{\infty} \frac{\sigma_{\alpha,\beta,\gamma}(p^{2k_2 + 1-l_p})}{p^{(2k_2 + 1-l_p)w + k_2 v}}.
\end{equation}
Executing the sum over $k_2$ in the first sum above, we obtain
\begin{equation}
\label{eq:JpEuler}
 J_{\epsilon k_1 ;p}(v, w) = (1-p^{-v})^{-1} \sum_{n\equiv l_p \shortmod{2}} \frac{\sigma_{\alpha,\beta,\gamma}(p^{n})}{p^{nw +  (\frac{n+l_p}{2}) v}}    \frac{\phi(p^{n+l_p})}{p^{n+l_p}} +  \frac{(\frac{\epsilon k_1}{p})}{\sqrt{p}} \sum_{2k_2 \geq l_p -1}^{\infty} \frac{\sigma_{\alpha,\beta,\gamma}(p^{2k_2 + 1-l_p})}{p^{(2k_2 + 1-l_p)w + k_2 v}}.
\end{equation}
If $p | k_1$ then the calculation is similar except we use the third line of \eqref{eq:Gk}.  The two classes of solutions come from $n+l_p$ even and $n+l_p \leq 2k_2 + 1$, and from $n+l_p = 2k_2 + 2$.  The former class gives an identical contribution to the previous case since for $n+l_p$ even the condition $n+l_p \leq 2k_2 + 1$ is equivalent to $n+l_p \leq 2k_2$.  Therefore in the case $p|k_1$ we have
\begin{equation}
 J_{\epsilon k_1 ;p}(v, w) = (1-p^{-v})^{-1} \sum_{n\equiv l_p \shortmod{2}} \frac{\sigma_{\alpha,\beta,\gamma}(p^{n})}{p^{nw +  (\frac{n+l_p}{2}) v}}    \frac{\phi(p^{n+l_p})}{p^{n+l_p}} -  p^{-1} \sum_{\substack{2k_2 +2 \geq l_p \\ k_2 \geq 0}} \frac{\sigma_{\alpha,\beta,\gamma}(p^{2k_2 + 2-l_p})}{p^{(2k_2 + 2-l_p)w + k_2 v}}.
\end{equation}

Now we analyze the above representations in closer detail.  Suppose $p \nmid 2a k_1 l$.  Then
\begin{equation}
 J_{\epsilon k_1 ;p}(v, w) = (1-p^{-v})^{-1} [1 + O(p^{-v-2w + \varepsilon}) + \frac{(\frac{\epsilon k_1}{p})}{ p^{\half + w}} (1-p^{-v}) (p^{-\alpha} + p^{-\beta} + p^{-\gamma} + O(p^{-v-2w + \varepsilon}))].
\end{equation}
Here $\varepsilon$ is small and accounts for the shift parameters.  If $\text{Re}(v) \geq 2$ and $\text{Re}(w) > \delta$, then
\begin{equation}
 J_{\epsilon k_1 ;p}(v, w) = [1 + (\frac{\epsilon k_1}{p}) p^{-\half - w} (p^{-\alpha} + p^{-\beta} + p^{-\gamma}) + O(p^{-2 + \varepsilon})],
\end{equation}
which we write as
\begin{equation}
\label{eq:Jpgeneric}
 (1-\frac{(\frac{\epsilon k_1}{p})}{p^{\half + w +\alpha}})^{-1}
(1-\frac{(\frac{\epsilon k_1}{p})}{p^{\half + w +\beta}})^{-1} 
(1-\frac{(\frac{\epsilon k_1}{p})}{p^{\half + w +\gamma}})^{-1}  (1 + O(p^{-1-2w + \varepsilon}) + O(p^{-2+\varepsilon})) ).
\end{equation}
The point is that the error term, when multiplied over all primes $p \nmid 2al k_1$ is $O_{\delta}(1)$.

Now consider the case $p \nmid 2al$, $p | k_1$.  A short calculation shows
\begin{equation}
 J_{\epsilon k_1 ;p}(v, w) = (1 + O(p^{-1-2w + \varepsilon}) + O(p^{-2 + \varepsilon})),
\end{equation}
which pleasantly agrees with \eqref{eq:Jpgeneric} in case $p | k_1$.

Now suppose that $p \nmid 2ak_1$, $p |l$, so $l_p \in \{ 1, 2 \}$.  If $l_p=1$ then
\begin{equation}
 J_{\epsilon k_1 ;p}(v, w) = O(p^{-w-v + \varepsilon}) + O(p^{-\half}) 
\end{equation}
Similarly, if $l_p = 2$ then
\begin{equation}
 J_{\epsilon k_1 ;p}(v, w) = O(p^{-v}) + O(p^{-\half-w-v + \varepsilon}).
\end{equation}
For each value of $l_p$, then, we have $J_{\epsilon k_1 ;p}(v, w) \ll p^{-l_p/2}$ in $\text{Re}(v) \geq 2$, $\text{Re}(w) > \delta$.

Finally, consider the case $p \nmid 2a$, $p|k_1$, $p|l$.  If $l_p =1$ then
\begin{equation}
 J_{\epsilon k_1 ;p}(v, w) = O(p^{-w-v + \varepsilon}) + O(p^{-1-w+\varepsilon}),
\end{equation}
while if $l_p=2$ then
\begin{equation}
 J_{\epsilon k_1 ;p}(v, w) = O(p^{-v}) + O(p^{-1}).
\end{equation}
For both values of $l_p$, we have $J_{\epsilon k_1 ;p}(v, w) \ll p^{-l_p/2}$ in $\text{Re}(v) \geq 2$, $\text{Re}(w) > \delta$.

Gathering all these results completes the proof.
\end{proof}

\subsection{Proof of Lemma \ref{lemma:MNresult}}
\label{section:proofofMNresult}
We now have the ingredients in place to prove Lemma \ref{lemma:MNresult}.
For notational simplicity, consider the contribution from $k_1$ odd in \eqref{eq:MNwithM1}, namely
\begin{equation}
 \half \sum_{\substack{(a,2l) = 1 \\ a \leq Y}} \frac{\mu(a)}{a^2} \sumstar_{k_1 \text{ odd}} \mathcal{M}_1(s,u, k_1, l).
\end{equation}
The contribution from $k_2$ even follows similar lines and we omit its discussion.
We write
\begin{multline}
\label{eq:M1evaluated}
 \mathcal{M}_1(s,u, k_1, l) = 
\leg{1}{2 \pi i}^2 \int_{(c_s)} \int_{(c_u)} H(s,u)
\leg{l a^2}{ k_1 }^{\frac{3s}{2} - u} I_{\epsilon k_1}(3s-2u, \thalf + u - \tfrac{s}{2})
\\
L_{2al}(1 + u - \frac{s}{2} + \alpha, \chi_{\epsilon k_1}) L_{2al}(1 + u - \frac{s}{2} + \beta, \chi_{\epsilon k_1})L_{2al}(1 + u - \frac{s}{2} + \gamma, \chi_{\epsilon k_1}) 
  ds du,
\end{multline}
where
\begin{equation}
 H(s,u) = \widetilde{F}(1+u) \frac{G(s)}{s} g_{\alpha, \beta, \gamma}(s) 
\pi^{-\frac{3s}{2} + u} (2^{1-3s+2u}-1)
\Gamma(\frac{3s}{2} - u) (\cos + \sin)(\frac{\pi}{2}(\frac{3s}{2} - u)).
\end{equation}
Note that $H$ has rapid decay in $s$ and $u$ in any vertical strips.  We initially have $c_s = c_u = 3$, say.  Next we move the contours (simultaneously) to $c_s = c_u = \half + \varepsilon$, in order to not cross any poles and to remain in a region where $J_{\epsilon k_1}$ is known to be analytic.  Next we move $c_u$ to $-\frac14 + \varepsilon$; in doing so we cross poles of the Dirichlet $L$-functions at $u=\frac{s}{2} - \alpha$, $u=\frac{s}{2} - \beta$, and $u=\frac{s}{2} - \gamma$ for $\epsilon = k_1 = 1$ only.  We denote the contribution to $M_N$ from these three residues as $M_N(k=\square, \alpha)$, $M_N(k=\square, \beta)$, and $M_N(k=\square, \gamma)$, respectively.  We shall develop these terms further in Section \ref{section:1.5miracle} and now focus on the error term coming from the new contour.  The quadratic large sieve inequality of Heath-Brown \cite{H-B} shows
\begin{equation}
 \sumstar_{K < k_1 \leq 2K} |L_{2al}(\sigma + it)|^4 \ll (al)^{\varepsilon} K^{1+\varepsilon} (1+|t|)^{1+\varepsilon},
\end{equation}
for $\half \leq \sigma \leq 1$.  Thus the sum over $k_1$ squarefree converges absolutely along these lines of integration.  Furthermore, notice that with these choices of $c_u$ and $c_s$ that
\begin{equation}
 H(s,u) \ll X^{3/4 + \varepsilon} (1 + |s|)^{-10^{10}} (1 + |u|)^{-10^{10}}.
\end{equation}
 Hence the contribution to $M_N$ from these error terms is
\begin{equation}
 \ll \sum_{a \leq Y} a^{-2} (l a^2)^{1 + \varepsilon} l^{-\half + \varepsilon} X^{3/4 + \varepsilon} \ll l^{1/2 + \varepsilon} Y X^{3/4 + \varepsilon}.
\end{equation}
This is precisely what we needed to show for Lemma \ref{lemma:MNresult}.  
It seems plausible that one could replace the application of Heath-Brown's quadratic large sieve with some of the work developed here since to a first approximation we obtain a third moment of quadratic Dirichlet $L$-functions in the above analysis.  Strictly speaking, this is problematic because of the ``bad'' Euler factors and the presence of the factor $I_{\epsilon k_1}$ which does not match our intial setup.

For future reference, we record the following expression for $M_N(k=\square, \alpha)$:
\begin{multline}
\label{eq:MNksquarealpha}
 M_N(k=\square, \alpha) = \half \sum_{\substack{(a,2l) = 1 \\ a \leq Y}} \frac{\mu(a)}{a^2} \frac{1}{2 \pi i} \int_{(c_s)} \frac{G(s)}{s} g_{\alpha, \beta, \gamma}(s) \widetilde{F}(1 + \tfrac{s}{2} - \alpha) 
\leg{l a^2}{ \pi }^{s + \alpha} (2^{1-2s - 2\alpha)}-1)
\\
\Gamma(s+\alpha) (\cos + \sin)(\frac{\pi}{2}(s + \alpha)) \text{Res}_{w=\half - \alpha} J_{1}(2s + 2\alpha, w)
  ds.
\end{multline}

\section{Proof of Lemma \ref{lemma:MNk0andMR111}}
\label{section:proofofMNk0andMR111}
\subsection{Part 1}
\label{section:firstmiracle}
Here we prove that \eqref{eq:MNk0andMR111} holds. 
We use the expression \eqref{eq:MNk=0term}, and compare it to
\begin{equation}
 M_R(1,1,1) = \frac{1}{2} \sum_{\substack{(a,2l) = 1 \\ a > Y}} \frac{\mu(a)}{a^2} \frac{1}{2 \pi i} \int_{(\varepsilon)} \frac{G(s)}{s} g_{\alpha, \beta, \gamma}(s) \widetilde{F}(1 + \frac{3s}{2}) D_R(1,1,1;s) ds,
\end{equation}
where
\begin{multline}
 D_R(1,1,1;s) = \frac{1}{\zeta_{2l}(2)} \sum_{(b,2l) = 1} \frac{1}{b^2} \sum_{r_1, r_2, r_3 | ab} \frac{\mu(r_1) \mu(r_2) \mu(r_3)}{r_1^{1/2+\alpha + s} r_2^{1/2 + \beta + s} r_3^{1/2 + \gamma + s}} 
\\
\sum_{(n,2) = 1} \frac{\sigma_{\alpha, \beta, \gamma}((lr_1 r_2 r_3)^* n^2)}{(l r_1 r_2 r_3)^* n^2)^{\frac12 + s}} \prod_{p | n l r_1 r_2 r_3} (1+p^{-1})^{-1}.
\end{multline}
Using a computer algebra package, we shall check that in fact
\begin{equation}
\label{eq:DNk0=DR111}
 D_N(k=0;s) = D_R(1,1,1;s).
\end{equation}
Notice that each Dirichlet series above has an Euler product and hence it suffices to check each Euler factor.  For $p \nmid a l$, we have that the Euler factor at $p$ for $D_N(k=0;s)$ is
\begin{equation}
\label{eq:EulerpDNk0}
 1 + (1-p^{-1}) \sum_{j \geq 1} \frac{1}{p^{j(1 + 2s)}} \sum_{a + b + c = 2j} p^{-a \alpha - b \beta - c \gamma}.
\end{equation}
Set $x = p^{-\half - \alpha -s}$, $y = p^{-\half - \beta -s}$, $z = p^{-\half - \gamma -s}$, and define
\begin{equation}
 T(x,y,z) = \sum_{a, b, c \geq 0} \frac{1 + (-1)^{a+b+c}}{2} x^a y^b z^c, \qquad U(x,y,z) = \sum_{a, b, c \geq 0} \frac{1 - (-1)^{a+b+c}}{2} x^a y^b z^c,
\end{equation}
which evaluate as rational functions in $x,y,z$.  
Then \eqref{eq:EulerpDNk0} takes the form
\begin{equation}
 1 + (1-p^{-1}) (-1 + T(x,y,z)).
\end{equation}
On the other hand, we compute the Euler factor at $p$ for $D_R(1,1,1;s)$ for $p \nmid a l$ as
\begin{equation}
\label{eq:EulerpDR111}
 1 + (1+p^{-1})^{-1} (-1 + Q(x,y,z)),
\end{equation}
where $Q$ corresponds to the same sum but with the annoying factor $(1+p^{-1})^{-1}$ removed, and a direct calculation gives 
\begin{equation}
 Q(x,y,z) = (1-p^{-2})[T + \frac{1}{p^2(1-p^{-2})} (T - (x+y+z)U + (xy + xz + yz)T - xyz U)].
\end{equation}
Now we obtain an explicit rational function representation for \eqref{eq:EulerpDR111} and a computer quickly verifies that these are identical.  Similar computations cover the cases $p | a$ and $p | l$ (the final case $p=2$ is trivial, both sides being $1$), giving 
\eqref{eq:DNk0=DR111}.

Thus we obtain that
\begin{equation}
 M_N(k=0) + M_R(1,1,1) = \frac{1}{2} \sum_{(a,2l) = 1} \frac{\mu(a)}{a^2} \frac{1}{2 \pi i} \int_{(\varepsilon)} \frac{G(s)}{s} g_{\alpha, \beta, \gamma}(s) \widetilde{F}(1 + \frac{3s}{2}) D_N(k=0;s) ds.
\end{equation}
Noting that
\begin{equation}
\frac{\phi(ln)}{ln} \sum_{(a, 2ln) = 1} \frac{\mu(a)}{a^2} = \frac{1}{\zeta_2(2)} \prod_{p | nl} (1+p^{-1})^{-1},
\end{equation}
and that 
\begin{equation}
  \sum_{(n,2) = 1} \frac{\sigma_{\alpha, \beta, \gamma}(l_1 n^2)}{(l_1 n^2)^{\frac12 + s}} \prod_{p | nl} (1+p^{-1})^{-1}  = \frac{1}{\sqrt{l_1}} A_{\alpha+s, \beta+s, \gamma+s}(l),
\end{equation}
we have
\begin{equation}
 M_N(k=0) + M_R(1,1,1) = \frac{1}{2 \zeta_2(2) \sqrt{l_1}}  \frac{1}{2 \pi i} \int_{(\varepsilon)} \frac{G(s)}{s} g_{\alpha, \beta, \gamma}(s) \widetilde{F}(1 + \frac{3s}{2}) A_{\alpha+s, \beta+s, \gamma+s}(l) ds .
\end{equation}
By Lemma \ref{lemma:A} and Remark \ref{remark:zero}, we can move the contour of integration to $-1/2 + \varepsilon$ crossing a pole at $s=0$ only in the process.  The residue at $s=0$ is as desired, and the error term is of size $O(X^{1/4+\varepsilon} l^{\varepsilon})$, also as desired.

\subsection{Part 2}
\label{section:1.5miracle}
Here we prove \eqref{eq:MNksquareandMR-111}; this is much more intricate than proving \eqref{eq:MNk0andMR111}.  We begin with an Archimedean-type identity.
\begin{mylemma}
\label{lemma:archcalc}
 Let $u$ be a complex number.  Then
\begin{equation}
\label{eq:archcalc}
 (2^{1-2u}-1) (\cos + \sin)(\frac{\pi}{2} u) \pi^{-u} \Gamma(u) = 2 \Gamma_u \frac{\zeta_2(1-2u)}{\zeta(2u)},
\end{equation}
where recall $\Gamma_u$ is defined by \eqref{eq:Gammadef}.
\end{mylemma}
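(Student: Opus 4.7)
\medskip

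\noindent\textbf{Proof proposal for Lemma \ref{lemma:archcalc}.}
The plan is to transform the right-hand side of \eqref{eq:archcalc} using three standard identities for the Gamma/zeta function, collect powers of $2$ and $\pi$, and recognize the left-hand side. Both sides are meromorphic in $u$, so it suffices to establish the identity in any open set and conclude by analytic continuation.

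Concretely, I would first eliminate the zeta-ratio via the completed functional equation
\begin{equation*}
\pi^{-s/2}\Gamma(s/2)\zeta(s) = \pi^{-(1-s)/2}\Gamma((1-s)/2)\zeta(1-s),
\end{equation*}
applied at $s=2u$. This yields
\begin{equation*}
\frac{\zeta(1-2u)}{\zeta(2u)} = \pi^{1/2-2u}\frac{\Gamma(u)}{\Gamma(\half - u)},
\end{equation*}
so that
\begin{equation*}
2\Gamma_u\frac{\zeta_2(1-2u)}{\zeta(2u)} = 2^{1-3u}(1-2^{2u-1})\,\pi^{1/2-u}\,\frac{\Gamma(\tfrac14 - \tfrac{u}{2})\,\Gamma(u)}{\Gamma(\tfrac14 + \tfrac{u}{2})\,\Gamma(\tfrac12 - u)},
\end{equation*}
after substituting the definition $\Gamma_u = 2^{-3u}\pi^u\Gamma(\tfrac14-\tfrac{u}{2})/\Gamma(\tfrac14+\tfrac{u}{2})$ from \eqref{eq:Gammadef}.

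Next, I would apply the Legendre duplication formula $\Gamma(2z) = 2^{2z-1}\pi^{-1/2}\Gamma(z)\Gamma(z+\tfrac12)$ with $z = \tfrac14 - \tfrac{u}{2}$ to expand $\Gamma(\tfrac12 - u)$; this cancels the $\Gamma(\tfrac14-\tfrac{u}{2})$ factor and leaves a $\Gamma(\tfrac34 - \tfrac{u}{2})$ in the denominator. Combining this with $\Gamma(\tfrac14 + \tfrac{u}{2})$ via the reflection formula gives
\begin{equation*}
\Gamma\!\lp\tfrac34 - \tfrac{u}{2}\rp \Gamma\!\lp\tfrac14 + \tfrac{u}{2}\rp = \frac{\pi}{\sin\bigl(\tfrac{3\pi}{4} - \tfrac{\pi u}{2}\bigr)} = \frac{\pi \sqrt{2}}{(\cos + \sin)(\tfrac{\pi u}{2})},
\end{equation*}
where I used the angle-addition identity $\sin(\tfrac{3\pi}{4} - \tfrac{\pi u}{2}) = \tfrac{1}{\sqrt2}(\cos + \sin)(\tfrac{\pi u}{2})$.

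Finally I would substitute this back and collect the powers of $2$ and $\pi$. Using $2^{1-2u}(1-2^{2u-1}) = 2^{1-2u} - 1$, the expression reduces cleanly to $(2^{1-2u}-1)(\cos+\sin)(\tfrac{\pi u}{2})\pi^{-u}\Gamma(u)$, which is precisely the left-hand side of \eqref{eq:archcalc}. No step is genuinely hard; the main obstacle is purely bookkeeping of the exponents of $2$ and $\pi$ through the three classical identities (functional equation, duplication, reflection), and choosing the right trigonometric angle-addition identity so that the half-angle factor $(\cos+\sin)(\tfrac{\pi u}{2})$ appears on the nose rather than $(\cos-\sin)(\tfrac{\pi u}{2})$ or $\cos(\pi u)$ (an equally valid but slightly longer route would use $\cos(\pi u) = (\cos+\sin)(\tfrac{\pi u}{2})(\cos-\sin)(\tfrac{\pi u}{2})$ after invoking the asymmetric form of the functional equation instead of the symmetric one).
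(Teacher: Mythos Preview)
Your proof is correct and follows essentially the same route as the paper's, just run in the opposite direction (you start from the right-hand side, the paper starts from the left) and with the gamma manipulations packaged slightly differently: where the paper invokes the single identity $\pi^{-1/2}2^{1-v}\cos(\tfrac{\pi v}{2})\Gamma(v)=\Gamma(\tfrac{v}{2})/\Gamma(\tfrac{1-v}{2})$ at $v=\tfrac12-u$, you achieve the same effect by combining Legendre duplication with the reflection formula. The content is identical.
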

\begin{proof}
 We first use the functional equation for the Riemann zeta function in the form
\begin{equation}
 \pi^{-u} \Gamma(u) \zeta(2u) = \pi^{-\half + u} \Gamma(\thalf - u) \zeta(1-2u),
\end{equation}
giving that the left hand side of \eqref{eq:archcalc} is
\begin{equation}
 (2^{1-2u}-1) (\cos + \sin)(\frac{\pi}{2} u) \pi^{-\half+u} \Gamma(\thalf - u) \frac{\zeta(1-2u)}{\zeta(2u)}.
\end{equation}
By examining the Euler product, we observe that
\begin{equation}
 (2^{1-2u} -1) \zeta(1-2u) = 2^{1-2u} \zeta_2(1-2u),
\end{equation}
so that the left hand side of \eqref{eq:archcalc} becomes
\begin{equation}
 2^{1-2u} (\cos + \sin)(\frac{\pi}{2} u) \pi^{-\half+u} \Gamma(\thalf - u) \frac{\zeta_2(1-2u)}{\zeta(2u)}.
\end{equation}
Next we use the trigonometric identity
\begin{equation}
 \cos(\theta) + \sin(\theta) = \sqrt{2} \cos(\frac{\pi}{4} - \theta),
\end{equation}
to get that the LHS of \eqref{eq:archcalc} is
\begin{equation}
 2^{\frac32-2u} \cos(\frac{\pi}{2} (\thalf -u)) \pi^{-\half+u} \Gamma(\thalf - u) \frac{\zeta_2(1-2u)}{\zeta(2u)}.
\end{equation}
Next we use the gamma function identity
\begin{equation}
 \pi^{-\half} 2^{1-v} \cos(\frac{\pi}{2} v) \Gamma(v) = \frac{\Gamma(\frac{v}{2})}{\Gamma(\frac{1-v}{2})},
\end{equation}
with $v= \half -u $, getting now that \eqref{eq:archcalc} is
\begin{equation}
 2^{1-3u}  \pi^{u} \frac{\Gamma(\frac{\half-u}{2})}{\Gamma(\frac{\half+u}{2})} \frac{\zeta_2(1-2u)}{\zeta(2u)}.
\end{equation}
Now from the definition of \eqref{eq:Gammadef}, we obtain the right hand side of \eqref{eq:archcalc}, as desired.
\end{proof}

We begin our proof of \eqref{eq:MNksquareandMR-111} by applying Lemma \ref{lemma:archcalc} to \eqref{eq:MNksquarealpha}, with $u=s + \alpha$, obtaining
\begin{multline}
\label{eq:MNksquarealpha2}
 M_N(k=\square, \alpha) = \half \sum_{\substack{(a,2l) = 1 \\ a \leq Y}} \frac{\mu(a)}{a^2} \frac{1}{2 \pi i} \int_{(c_s)} \frac{G(s)}{s} g_{\alpha, \beta, \gamma}(s) \widetilde{F}(1 + \tfrac{s}{2} - \alpha) \Gamma_{s + \alpha}
\\
2 (l a^2)^{s + \alpha}   \zeta_2(1-2\alpha - 2s) \text{Res}_{w=\half - \alpha} \frac{J_{1}(2s + 2\alpha, w)}{\zeta(2s + 2\alpha)}
  ds.
\end{multline}
One could wonder why it was beneficial to apply Lemma \ref{lemma:archcalc}.  The answer is that we wish to combine this term with $M_R(-1,1,1)$ which has a similar weight function to that appearing in \eqref{eq:MNksquarealpha2}.  This idea was used in \cite{Y}.

Recall that $c_s = \half + \varepsilon$.  It is convenient to represent the residue of $J_1(2s + 2\alpha, w)$ at $w = \half - \alpha$ by the value of $J_1(2s + 2 \alpha, w)/\zeta(\half + w + \alpha)$ at $w = \half - \alpha$.  Then we obtain
\begin{multline}
\label{eq:MNksquarealpha3}
 M_N(k=\square, \alpha) = \half \sum_{\substack{(a,2l) = 1 \\ a \leq Y}} \frac{\mu(a)}{a^2} \frac{1}{2 \pi i} \int_{(c_s)} \frac{G(s)}{s} g_{\alpha, \beta, \gamma}(s) \widetilde{F}(1 + \tfrac{s}{2} - \alpha) 
\\
\Gamma_{s + \alpha} a^{2 \alpha + 2s} D_N(k=\square, \alpha; s)
  ds.
\end{multline}
where
\begin{equation}
 D_N(k=\square, \alpha; s) = 2 l^{s + \alpha}   \zeta_2(1-2\alpha - 2s)  \frac{J_{1}(2s + 2\alpha, w)}{\zeta(2s + 2\alpha) \zeta(\half + w + \alpha)} \Big|_{w=\half - \alpha}.
\end{equation}
Before performing further analysis of this function, we recall from \eqref{eq:MRmainterms} that
\begin{multline}
M_R(-1, 1, 1) =  \frac{1}{2} \sum_{\substack{(a,2l) = 1 \\ a > Y}} \frac{\mu(a)}{a^2}
\frac{1}{2\pi i} \int_{(\varepsilon)} 
\frac{G(s)}{s} g_{\alpha,\beta,\gamma}(s) 
\widetilde{F}(1+ \tfrac{s}{2} - \alpha) 
\\
\Gamma_{\alpha + s} a^{2 \alpha + 2s}
D_R(-1,1,1;s)  
  ds.
\end{multline}
where
\begin{equation}
 D_R(-1,1,1;s) = \frac{1}{\zeta_2(2)} \sum_{\substack{(b,2l)=1}} \frac{1}{b^{2(1-\alpha-s)}} \sum_{r_1, r_2, r_3 | ab} \frac{\mu(r_1) \mu(r_2) \mu(r_3)}{r_1^{\half +\alpha + s}r_2^{\half +\beta + s}r_3^{\half +\gamma + s}} \frac{A_{-\alpha - s, \beta + s, \gamma + s}(lr_1 r_2 r_3)}{\sqrt{(lr_1 r_2 r_3)^*}}
 .
\end{equation}

By analogy with \eqref{eq:DNk0=DR111} and similar identities in \cite{Y}, it may not be surprising that the following miracle occurs:
\begin{mylemma}
\label{lemma:2Dirichletseries}
 We have
\begin{equation}
\label{eq:2Dirichletseries}
 D_{N}(k=\square, \alpha;s) = D_R(-1,1,1;s).
\end{equation}
\end{mylemma}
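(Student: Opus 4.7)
The plan is to verify the identity \eqref{eq:2Dirichletseries} by showing that both $D_N(k=\square,\alpha;s)$ and $D_R(-1,1,1;s)$ admit Euler product factorizations and agree prime-by-prime, in the same spirit as the authors' proof of \eqref{eq:DNk0=DR111}, but with substantially more bookkeeping.

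First, I would establish the Euler product structure on each side. For $D_R(-1,1,1;s)$, swap the order of summation so that the divisor sum over $(r_1,r_2,r_3) \mid ab$ sits inside the sum over $b$; because $A_{\alpha,\beta,\gamma}(\cdot)$ is multiplicative, $\mu$ is multiplicative, and the constraints $r_i \mid ab$ split cleanly into $r_i' \mid a$ and $r_i'' \mid b$ (with $a$ squarefree since $\mu(a)\ne 0$), the resulting expression factors as $\prod_p D_{R,p}(s)$. For $D_N(k=\square,\alpha;s)$, Lemma \ref{lemma:Jprop} gives $J_1$ as an Euler product; at $k_1=1$ the character is trivial so $L_{2al}(\cdot,\chi_1)=\zeta_{2al}(\cdot)$, and the pole at $w=\tfrac12-\alpha$ comes solely from $\zeta_{2al}(\tfrac12+w+\alpha)$ at primes $p\nmid 2al$. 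Because the residue prescription $J_1(2s+2\alpha,w)/\zeta(\tfrac12+w+\alpha)\big|_{w=1/2-\alpha}$ respects the Euler factorization, we obtain $D_N(k=\square,\alpha;s)=\prod_p D_{N,p}(s)$ after absorbing the overall prefactor $2l^{s+\alpha}\zeta_2(1-2\alpha-2s)/\zeta(2s+2\alpha)$.

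Next, I would compare local factors at each prime. Since $(a,2l)=1$, every prime falls into exactly one of the mutually exclusive cases $p=2$, $p\mid a$, $p\mid l$, $p\nmid 2al$. On the $D_N$ side, I would read off $J_{1;p}(2s+2\alpha,w)$ from the explicit formula \eqref{eq:JpEuler} of the proof of Lemma \ref{lemma:Jprop} and then evaluate the residue. On the $D_R$ side, I would use the explicit form of $A_p'$ from the proof of Lemma \ref{lemma:A} (together with the $p\mid l$ adjustment given there), combined with the inner sum $\sum_{(r_1',r_2',r_3')\in\{1,p\}^3}$ from $r_i'\mid a$ and the analogous sum from $r_i''\mid b$, plus the geometric sum from the powers of $p$ in $b$. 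As in the proof of \eqref{eq:DNk0=DR111}, the substitution $x=p^{-1/2-\alpha-s}$, $y=p^{-1/2-\beta-s}$, $z=p^{-1/2-\gamma-s}$ and the bookkeeping functions $T(x,y,z)$, $U(x,y,z)$ reduce each local factor to a rational function in $(x,y,z,p)$, whereupon equality can be checked symbolically.

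The principal obstacle is the generic case $p \nmid 2al$: $A_p'$ is the degree-6 rational function $N(x,y,z)/D(x,y,z)$, the sum $(r_1',r_2',r_3') \in \{1,p\}^3$ produces eight subterms each involving $A_p'$ with shifted arguments, and the $\sqrt{(lr_1r_2r_3)^*}$ denominator injects further factors of $p^{\pm 1/2}$ that must conspire with the archimedean-type prefactor $\zeta_2(1-2\alpha-2s)/\zeta(2s+2\alpha)$ on the $D_N$ side. I would not attempt this algebra by hand; the natural path, as the authors use throughout this section, is to place the two rational functions in a common canonical form and verify the identity with a computer algebra package. Once each local identity $D_{N,p}(s)=D_{R,p}(s)$ is confirmed, \eqref{eq:2Dirichletseries} follows on the region of absolute convergence and extends to the region of interest by analytic continuation of the (meromorphic) Euler products on each side.
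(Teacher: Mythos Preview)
Your proposal is correct and follows essentially the same approach as the paper: both verify \eqref{eq:2Dirichletseries} by checking equality of Euler factors prime-by-prime (splitting into the cases $p=2$, $p\mid a$, $p\mid l$, $p\nmid 2al$) via computer algebra, and both justify the reduction to local factors by meromorphic continuation. The only refinement in the paper is that, because $D_R(-1,1,1;s)$ involves $A_{-\alpha-s,\beta+s,\gamma+s}$ with the first shift flipped, it introduces the extra variable $w=p^{2\alpha+2s}$ and auxiliary series $V,W$ (built from $x^aw^a y^b z^c$) in place of your proposed reuse of $T,U$; this is a notational convenience rather than a substantive difference.
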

As in Section \ref{section:firstmiracle}, we shall verify this with a computer calculation in Section \ref{section:secondmiracle}.  Taking it for granted for now, we then have
\begin{multline}
M_N(k=\square, \alpha) + M_R(-1, 1, 1) =  \frac{1}{2} \sum_{\substack{(a,2l) = 1}} \frac{\mu(a)}{a^2}
\frac{1}{2\pi i} \int_{(\varepsilon)} 
\frac{G(s)}{s} g_{\alpha,\beta,\gamma}(s) 
\widetilde{F}(1+ \tfrac{s}{2} - \alpha) 
\\
\Gamma_{\alpha + s} a^{2 \alpha + 2s}
D_R(-1,1,1;s)
  ds.
\end{multline}
Grouping $ab$ into a variable, we see from the M\"{o}bius formula that only $ab=1$ survives, and hence $r_1 = r_2 = r_3 = 1$.  Thus
\begin{multline}
M_N(k=\square, \alpha) + M_R(-1, 1, 1) =  
\\
\frac{1}{2 \zeta_2(2) \sqrt{l_1}} 
\frac{1}{2\pi i} \int_{(\varepsilon)} 
\frac{G(s)}{s} g_{\alpha,\beta,\gamma}(s) 
\widetilde{F}(1+ \tfrac{s}{2} - \alpha) \Gamma_{\alpha + s} A_{-\alpha - s, \beta + s, \gamma + s}(l)
  ds.
\end{multline}
By Lemma \ref{lemma:A} and Remark \ref{remark:zero}, we can move the contour of integration to $-1/2 + \varepsilon$, crossing a pole at $s=0$ only.  The residue at $s=0$ gives the main term in \eqref{eq:MNksquareandMR-111}, and  the error term is of size $O(X^{3/4 + \varepsilon} l^{\varepsilon})$, as desired.

\subsection{Proof of Lemma \ref{lemma:2Dirichletseries}}
\label{section:secondmiracle}
The basic plan is similar to that used in Section \ref{section:firstmiracle}: check the identity at each Euler factor using a computer.  There is a minor issue with convergence that we delay explaining for a moment.  First consider the case $p=2$ (which can be easily done by hand).  The Euler factor for $D_R$ at $p=2$ is $1$, while the Euler factor for $D_N$ is
\begin{equation}
 2 (1-2^{-2s-2\alpha})(1-2^{-1})(1-2^{-2s-2\alpha})^{-1} = 1.
\end{equation}
Now we add the following definitions to those used in Section \ref{section:firstmiracle}: $w = p^{2 \alpha + 2s}$, 
\begin{equation}
 V(x,y,z) = \sum_{n=0}^{\infty} \frac{\sigma_{-\alpha-s, \beta+s, \gamma+s}(p^{2n})}{p^n} = \sum_{a, b, c \geq 0} \frac{1 + (-1)^{a+b+c}}{2} x^a w^a y^b z^c,
\end{equation}
and
\begin{equation}
 W(x,y,z) = \sum_{n=0}^{\infty} \frac{\sigma_{-\alpha-s, \beta+s, \gamma+s}(p^{2n+1})}{p^{n+\half}} = \sum_{a, b, c \geq 0} \frac{1 - (-1)^{a+b+c}}{2} x^a w^a y^b z^c.
\end{equation}
We view $x,y,z, p$ as free variables; note $w = 1/(px^2)$.  Then the Euler factor at $p|a$ for $D_N$ is $(1-p^{-1})(1-\frac{w}{p})^{-1}$, while for $D_R$ it is
\begin{equation}
 (1-p^{-2})(1-wp^{-2})^{-1} [ 1 + (1+p^{-1})^{-1} \{-1 + Q' \}],
\end{equation}
where
\begin{equation}
 Q' = V - (x+y+z)W + (xy+xz + yz)V - xyz W.
\end{equation}
A computer indeed verifies these are equal.  For $p \nmid 2al$, we have that the Euler factor at $p$ for $D_N$ is
\begin{equation}
 (1-\frac{w}{p})^{-1} (1-\frac{1}{w})(1-\frac{1}{p}) (A + B),
\end{equation}
where
\begin{equation}
 A = (1-\frac{1}{w})^{-1}(1 + (1-p^{-1})[-1 + T]), \quad \text{and} \quad B = w x U.
\end{equation}
The corresponding Euler factor at $p$ for $D_R$ is
\begin{multline}
 (1-p^{-2})\{A_{-\alpha-s, \beta + s, \gamma+s}(1) + \frac{w}{p^2}(1-\frac{w}{p^2})^{-1} [A_{-\alpha-s, \beta + s, \gamma+s}(1) 
\\
- \frac{(x+y+z + xyz)}{\sqrt{p}} A_{-\alpha-s, \beta + s, \gamma+s}(p) +(xy+xz+yz)A_{-\alpha-s, \beta + s, \gamma+s}(p^2)] \}.
\end{multline}
Then observe
\begin{gather}
 A_{-\alpha-s, \beta + s, \gamma+s}(1) = 1 + (1+p^{-1})^{-1}(-1 + V), 
\\
\frac{1}{\sqrt{p}} A_{-\alpha-s, \beta + s, \gamma+s}(p) = (1+p^{-1})^{-1} W, \qquad A_{-\alpha-s, \beta + s, \gamma+s}(p^2) = (1+p^{-1})^{-1} V.
\end{gather}
Again, a computer verifies the equality of Euler factors.  We omit the final case $p|l$ (two cases, actually, dependong on if $l_p = 1$ or $l_p = 2$) which follow similar lines.

Finally, we argue why it suffices to check that the Euler products agree at the desired point.  Towards this end, we have
\begin{mylemma}
\label{lemma:J1merocontinuation}
 There exists $\delta > 0$ such that for $\alpha, \beta, \gamma$ small enough compared to $\delta$ we have that
\begin{equation}
\label{eq:J1ratio}
 \frac{J_1(v,w)}{\zeta(v) \zeta(\thalf + w + \alpha) \zeta(\thalf + w + \beta) \zeta(\thalf + w + \gamma)  }
\end{equation}
has meromorphic continuation to the region $\text{Re}(v) > -\delta$, $\text{Re}(w) > \half - \delta$.  More precisely, \eqref{eq:J1ratio} equals, with $(\alpha_1, \alpha_2, \alpha_3)$ denoting $(\alpha, \beta, \gamma)$, 
\begin{equation}
\label{eq:J1ratiomainterm}
\frac{\prod_{1\leq i \leq j \leq 3} \zeta(v + 2w + \alpha_i + \alpha_j)}{\zeta(\thalf + w + v + \alpha)\zeta(\thalf + w + v + \beta)\zeta(\thalf + w + v + \gamma)}
\end{equation}
times an Euler product that is absolutely convergent in the above-stated domain.
\end{mylemma}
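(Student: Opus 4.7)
The plan is to establish the factorization prime-by-prime on the Euler product of $J_1(v,w)$, using the explicit local formulas already obtained during the proof of Lemma \ref{lemma:Jprop}. That lemma produced the three zeta factors $\zeta_{2al}(\tfrac12 + w + \alpha_i)$; the task now is to show that the residual factor $I_1(v,w)$ carries an additional zeta $\zeta(v)$ (from the sum over $k_2$), the six numerator zetas $\prod_{1 \leq i \leq j \leq 3}\zeta(v + 2w + \alpha_i + \alpha_j)$, and the three denominator zetas $\prod_i \zeta(\tfrac12 + w + v + \alpha_i)$, all modulo an Euler product that converges absolutely in the claimed region.

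I would start from the explicit expression \eqref{eq:JpEuler} for $J_{1;p}(v,w)$ at a prime $p \nmid 2al$ (with $l_p = 0$ and, since $k_1 = 1$, no character twist). Introducing the monomials $X_i = p^{-\alpha_i - w - v/2}$ for $i = 1,2,3$ and $t = p^{-v}$, the identity $\sigma_{\alpha,\beta,\gamma}(p^n) p^{-nw - (n/2)v} = [X_1^a X_2^b X_3^c]_{a+b+c=n}$ converts each of the two sums in \eqref{eq:JpEuler} into a restricted coefficient-extraction from $\prod_i (1 \mp X_i)^{-1}$, exactly as in the treatment of $A_p'$ in the proof of Lemma \ref{lemma:A}. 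The outcome is that $J_{1;p}(v,w)$ is a rational function in $X_1, X_2, X_3, t, p^{-1}$ with explicit polynomial numerator and denominator.

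Against this, the target main factor at $p$ --- namely the local piece of $\zeta(v)\prod_i \zeta(\tfrac12 + w + \alpha_i) \cdot \prod_{i\le j}\zeta(v + 2w + \alpha_i + \alpha_j)/\prod_i \zeta(\tfrac12 + w + v + \alpha_i)$ --- is also an explicit rational function in the same variables, and the claim is that the ratio equals $1 + O(p^{-1-\eta})$ uniformly for $(v,w)$ in the region $\text{Re}(v) > -\delta$, $\text{Re}(w) > \tfrac12 - \delta$, once $\alpha,\beta,\gamma$ are small compared to $\delta$. I would verify this rational-function identity via computer algebra, exactly in the spirit of \eqref{eq:DNk0=DR111} and \eqref{eq:2Dirichletseries} in Sections \ref{section:firstmiracle} and \ref{section:secondmiracle}. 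The primes dividing $2al$ contribute only finitely many factors; using the $l_p \in \{1,2\}$ cases of \eqref{eq:JpEuler} together with the trivial $p | 2a$ case, each such local factor matches the corresponding local piece of the target expression up to a quantity bounded by $O(l^{\varepsilon})$, which is harmless. Once the local factorization is in place, taking logarithms and summing shows that the remaining Euler product is absolutely convergent in the claimed domain, giving the meromorphic continuation of \eqref{eq:J1ratio}.

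The main obstacle is guessing and verifying the correct form of the local main term. The denominator zetas $\zeta(\tfrac12 + w + v + \alpha_i)$ have no direct reason to appear in the crude local expansion of $J_{1;p}$; their presence is forced by the need to match the off-diagonal main term predicted in Conjecture \ref{conj} (equivalently, by the shape of \eqref{eq:MNksquarealpha} after its eventual combination with $M_R(-1,1,1)$ in Section \ref{section:1.5miracle}). Thus the correct ansatz is suggested by the final answer one needs, and the algebraic identity is then verified mechanically; as with the other combinatorial identities in this paper, the fact that the simplification occurs at all is strong evidence for its correctness.
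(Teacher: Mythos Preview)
Your proposal is correct in substance but considerably more elaborate than what the paper actually does. The paper's proof is only a few lines: it restricts to primes $p\nmid 2al$ (the finitely many remaining primes being harmless for analytic continuation), takes the explicit local factor \eqref{eq:JpEuler} with $l_p=0$ and $\epsilon k_1=1$, and simply expands it to the first nontrivial order, obtaining
\[
(1-p^{-v})^{-1}\Big[1 + (1-p^{-1})\,\frac{\sigma_{\alpha,\beta,\gamma}(p^2)}{p^{v+2w}} + O(p^{-2v-4w+\varepsilon}) + (1-p^{-v})\,p^{-\tfrac12 - w}\big(\sigma_{\alpha,\beta,\gamma}(p) + O(p^{-v-2w+\varepsilon})\big)\Big].
\]
From this one reads off the required zeta factors directly: $(1-p^{-v})^{-1}$ contributes $\zeta(v)$; the term $\sigma_{\alpha,\beta,\gamma}(p)\,p^{-1/2-w}$ contributes the three $\zeta(\tfrac12+w+\alpha_i)$; the term $\sigma_{\alpha,\beta,\gamma}(p^2)\,p^{-v-2w}$ contributes the six $\zeta(v+2w+\alpha_i+\alpha_j)$; and the cross term $-p^{-v}\cdot\sigma_{\alpha,\beta,\gamma}(p)\,p^{-1/2-w}$ contributes the three denominator factors $\zeta(\tfrac12+w+v+\alpha_i)^{-1}$. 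In the region $\text{Re}(v)>-\delta$, $\text{Re}(w)>\tfrac12-\delta$ every remaining term is $O(p^{-1-\eta})$, and that is the whole proof.

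In particular, your remark that the denominator zetas ``have no direct reason to appear in the crude local expansion'' is not quite right: they are forced by the factor $(1-p^{-v})$ multiplying the second sum in \eqref{eq:JpEuler}, and fall out of exactly the asymptotic expansion above without any guessing from the global answer. No computer algebra or exact rational-function identity is needed here --- those tools are reserved in the paper for the genuinely delicate identities \eqref{eq:DNk0=DR111} and \eqref{eq:2Dirichletseries}, where two structurally different Euler products must agree exactly, not merely up to a convergent factor.
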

\begin{proof}
 It suffices to consider the behavior of the Euler factors with $p \nmid 2al$ in which case recall the Euler factor is given by \eqref{eq:JpEuler}.  This takes the shape
\begin{equation}
 (1-p^{-v})^{-1}[1 + (1-p^{-1}) \frac{\sigma_{\alpha, \beta, \gamma}(p^2)}{p^{v+2w}} + O(p^{-2v-4w + \varepsilon}) + (1-p^{-v}) p^{-\half - w} (\sigma_{\alpha, \beta, \gamma}(p) + O(p^{-v-2w + \varepsilon}))]
\end{equation}
From this description we can read off \eqref{eq:J1ratiomainterm}.
\end{proof}
By Lemmas \ref{lemma:J1merocontinuation} and \ref{lemma:A}, both sides of \eqref{eq:2Dirichletseries} can be continued meromorphically to a domain $\text{Re}(s) > -\delta$ with some $\delta > 0$.  Thus it suffices to check for each prime $p$ that the Euler factor at $p$ for each side agrees with the other side, as claimed.

\end{document}